\author{A.A. Vasil'eva}
\title{Kolmogorov widths of weighted Sobolev classes on a
multi-dimensional domain with conditions on the derivatives of
order $r$ and zero}
\date{}
\begin{document}

\maketitle

\newenvironment{Biblio}{%
                  \renewcommand{\refname}{\footnotesize REFERENCES}%
                  }

\def\inff{\mathop{\smash\inf\vphantom\sup}}
\renewcommand{\le}{\leqslant}
\renewcommand{\ge}{\geqslant}
\newcommand{\sgn}{\mathrm {sgn}\,}
\newcommand{\inter}{\mathrm {int}\,}
\newcommand{\dist}{\mathrm {dist}}
\newcommand{\supp}{\mathrm {supp}\,}
\newcommand{\R}{\mathbb{R}}
\newcommand{\Z}{\mathbb{Z}}
\newcommand{\N}{\mathbb{N}}
\newcommand{\Q}{\mathbb{Q}}
\theoremstyle{plain}
\newtheorem{Trm}{Theorem}
\newtheorem{trma}{Theorem}
\newtheorem{Def}{Definition}
\newtheorem{Cor}{Corollary}
\newtheorem{Lem}{Lemma}
\newtheorem{Rem}{Remark}
\newtheorem{Sta}{Proposition}
\newtheorem{Sup}{Assumption}
\newtheorem{Supp}{Assumption}
\newtheorem{Exa}{Example}
\renewcommand{\proofname}{\bf Proof}
\renewcommand{\thetrma}{\Alph{trma}}
\renewcommand{\theSupp}{\Alph{Supp}}

\section{Introduction}

The problem on estimating the Kolmogorov and linear widths of
weighted Sobolev classes is studied since 1970's \cite{el_kolli,
trieb_mat_sb}. These classes can be defined differently, depending
on smaller-order derivatives and boundary conditions. For example,
in \cite{triebel12, mieth1, mieth2, vas_width_raspr} the weighted
Sobolev classes are defined only by conditions on the higher-order
derivatives; in \cite{lo1, lo2, lo3, triebel, trieb_mat_sb, boy_1,
myn_otel, ait_kus1, ait_kus2} they are defined by conditions on
the derivatives of different orders. Also notice that in
\cite{step_lom, edm_lang, konov_leviat, lang_j_at1, lif_linde} the
weighted Sobolev spaces on an interval or a semi-axis are defined
as the image of a weighted Riemann-Liouville operator (the
criterion of boundedness of such operators was obtained by V.D.
Stepanov \cite{stepanov2, stepanov1}).

Here we consider the weighted Sobolev spaces with conditions on
the highest order and zero derivatives.

First we recall the definitions of the class $W^r_{p,g}(\Omega)$
and the space $L_{q,v}(\Omega)$.

Let $\Omega \subset \R^d$ be a domain, and let $g$,
$v:\Omega\rightarrow (0, \, \infty)$ be measurable functions.
Given a distribution $f$ on $\Omega$, we set $\displaystyle \nabla
^r\!f=\left(\partial^{r}\! f/\partial
x^{\overline{\beta}}\right)_{|\overline{\beta}| =r}$ (the partial
derivatives are taken in a sense of distributions;
$\overline{\beta} =(\beta_1, \, \dots, \, \beta_d)$,
$|\overline{\beta}| =\beta _1+ \ldots+\beta _d$). Let $l_{r,d}$ be
the number of components of the vector-valued distribution $\nabla
^r\!f$. We set
$$
W^r_{p,g}(\Omega)=\left\{f:\ \Omega\rightarrow \R\big| \; \exists
\psi :\ \Omega\rightarrow \R^{l_{r,d}}\!:\ \| \psi \|
_{L_p(\Omega)}\le 1, \, \nabla ^r\! f=g\cdot \psi\right\}
$$
\Big(we denote the corresponding function $\psi$ by
$\displaystyle\frac{\nabla ^r\!f}{g}$\Big),
$${\cal W}^r_{p,g}(\Omega)={\rm span}\,W^r_{p,g}(\Omega),$$
$$
\| f\|_{L_{q,v}(\Omega)}{=}\| fv\|_{L_q(\Omega)},\qquad
L_{q,v}(\Omega)=\left\{f:\Omega \rightarrow \R| \; \ \| f\|
_{L_{q,v}(\Omega)}<\infty\right\}.
$$

We define the set $M$ as the intersection of the class
$W^r_{p_1,g}(\Omega)$ and the unit ball of the space
$L_{p_0,w}(\Omega)$; i.e.,
\begin{align}
\label{m_def} M = \left\{ f:\Omega \rightarrow \R:\; \left\|
\frac{\nabla ^r f}{g}\right\|_{L_{p_1}(\Omega)}\le 1, \quad
\|wf\|_{L_{p_0}(\Omega)}\le 1\right\}.
\end{align}

For $d=1$, $p_1=p_0>1$, $q\ge 1$, the criterion for boundedness of
$M$ in the space $L_{q,v}(\Omega)$ was obtained by R.O. Oinarov
\cite{r_oinarov}. Then this result was generalized by V.D.
Stepanov and E.P. Ushakova \cite{st_ush} for $0<p_0\le q$,
$1<p_1\le q<\infty$ and $0<q<p_1<\infty$, $p_0=p_1>1$.

For multi-dimensional domain problems on embeddings of weighted
Sobolev classes with restrictions on derivatives of different
orders were studied by A. Kufner, P.I. Lizorkin, M.O. Otelbaev, K.
Mynbaev, L.K. Kusainova, O.V. Besov and other authors (see, e.g.,
\cite{lo1, myn_otel, besov, caso_ambr, kus1, kufner}).

We recall that the Kolmogorov $n$-width of a subset $C$ in a
normed space $X$ is the quantity
$$
d_n(C, \, X) = \inf _{L\in {\cal L}_n(X)} \sup _{x\in C} \inf
_{y\in L}\|x-y\|;
$$
here ${\cal L}_n(X)$ is a family of subspaces in $X$ of dimension
at most $n$; the linear $n$-width is the quantity
$$
\lambda_n(C, \, X) = \inf _{A\in L(X, \, X), \, {\rm rk}\, A\le n}
\sup _{x\in C}\|x-Ax\|
$$
(here $L(X, \, X)$ is the family of linear continuous operators on
$X$, ${\rm rk}\, A$ is the dimension of the image of $A$).

Triebel \cite{triebel, trieb_mat_sb} obtained the estimates for
the Kolmogorov widths of the set $M$ for $p_1=p_0\le q$; $\Omega$
is a domain with smooth boundary, the weights are the powers of
the distance from $\partial \Omega$. The parameters are such that
the orders of widths depend only on the conditions on the
highest-order derivatives. In the papers of Lizorkin and Otelbaev
\cite{lo3}, Aitenova and Kusainova \cite{ait_kus1, ait_kus2}, in
the book of Mynbaev and Otelbaev \cite{myn_otel} estimates for the
linear widths of the set $M$ in $L_{q,v}$ were obtained for
$p_0=p_1$ and general weights. For $q\le 2$ or $p_1\ge 2$ under
some conditions on the weights the upper and the lower estimates
are the same in the sense of orders. In addition, in
\cite{myn_otel} the special case $\Omega = \R^d$,
$g(x)=(1+|x|)^\beta$, $w(x)=(1+|x|)^\sigma$,
$v(x)=(1+|x|)^\lambda$ was considered (again for $p_0=p_1$; for
$p_1<2<q$ the upper and the lower estimates are different in the
sense of orders). Boykov \cite{boy_1} obtained the order estimates
for the Kolmogorov widths of the classes $\cap _{k=0}^r
W^k_{p_k,g_k}(K)$, where $p_k=\infty$ for $0\le k\le l$, $p_k=p$
for $l+1\le k\le r$, $K$ is a cube, $g_i$ are the powers of the
distance from $\partial K$. The conditions on the parameters are
such that the orders of the $n$-widths depend only on the
restrictions on the high-order derivatives.

In this paper we obtain the order estimates for the Kolmogorov
widths of the set $M$ in the space $L_{q,v}(\Omega)$. In the first
two examples $\Omega$ is a John domain, the weights are the
functions of distance from some $h$-subset of $\partial \Omega$
(the necessary definitions will be given later). In the third
example $\Omega=\R^d$, the weights are powers of $1+|x|$ (as in
\cite{myn_otel}).

We denote by $B_a(x)$ the euclidean ball of radius $a$ with the
center in the point $x$.

\begin{Def}
\label{fca} Let $\Omega\subset\R^d$ be a bounded domain, $a>0$. We
write $\Omega \in {\bf FC}(a)$ if there is a point $x_*\in \Omega$
such that for all $x\in \Omega$ there are a number $T(x)>0$ and a
curve $\gamma _x:[0, \, T(x)] \rightarrow\Omega$ with the
following properties:
\begin{enumerate}
\item $\gamma _x$ has the natural parametrization with respect to
the euclidean norm on $\R^d$,
\item $\gamma _x(0)=x$, $\gamma _x(T(x))=x_*$,
\item $B_{at}(\gamma _x(t))\subset \Omega$ for all $t\in [0, \, T(x)]$.
\end{enumerate}
We say that $\Omega$ is a John domain (or satisfies the John
condition) if $\Omega\in {\bf FC}(a)$ for some $a>0$.
\end{Def}

As examples of such domains we can take bounded domains with
Lipschitz boundary and the Koch's snowflake. The domain $\{(y, \,
z)\in \R^{d-1}\times \R:\; 0<z<1, \; |y|<z^\sigma\}$ for
$\sigma>1$ does not satisfy the John condition. Yu.G. Reshetnyak
\cite{resh1, resh2} proved that for a John domain the condition
for embedding of a non-weighted Sobolev class into a non-weighted
Lebesgue space is the same as for a cube.

\begin{Def}
\label{h_set} {\rm (see \cite{m_bricchi1}).} Let $\Gamma\subset
\R^d$ be a nonempty compact set, $h:(0, \, 1] \rightarrow (0, \,
\infty)$ be a non-decreasing function. We say that $\Gamma$ is an
$h$-set if there is a constant $c_*\ge 1$ and a finite
countable-additive measure $\mu$ on $\R^d$ such that $\supp
\mu=\Gamma$ and
$$
c_*^{-1}h(t)\le \mu(B_t(x))\le c_* h(t)
$$
for all $x\in \Gamma$ and $t\in (0, \, 1]$.
\end{Def}

Examples of $h$-sets are Lipschitz manifolds of dimension $k$
(then $h(t)=t^k$), some Cantor-type sets, the Koch's curve.

In order to formulate the main results we need
\begin{Def}
\label{theta_j} Given $s_*$, $\tilde \theta$, $\hat \theta\in \R$,
we define the numbers $j_0\in \N$ and $\theta_j\in \R$ $(1\le j\le
j_0)$ as follows.
\begin{enumerate}
\item For $p_0\ge q$, $p_1\ge q$: $j_0=2$, $\theta_1=s_*$, $\theta_2 = \tilde
\theta$.
\item For $p_0>q$, $p_1<q\le 2$: $j_0=3$,
$\theta_1=s_*+\frac 1q-\frac{1}{p_1}$, $\theta_2 = \tilde \theta$,
$\theta_3 =\hat \theta$.
\item For $p_0>q$, $2\le p_1<q$: $j_0=4$, $\theta_1=s_*$, $\theta_2 =
\frac{q(s_*+1/q-1/p_1)}{2}$, $\theta_3 =\tilde \theta$, $\theta_4=
\frac{q\hat \theta}{2}$.
\item For $p_0>q$, $p_1<2<q$: $j_0=5$, $\theta_1=s_* +\frac 12 -\frac
{1}{p_1}$, $\theta_2 =\frac{q(s_*+1/q-1/p_1)}{2}$,
$\theta_3=\tilde \theta$, $\theta_4 = \hat\theta + \frac 12-\frac
1q$, $\theta_5 = \frac{q\hat \theta}{2}$.
\item For $p_0\le q$, $p_1\le q\le 2$: $j_0=2$,
$\theta_1= s_*+\frac 1q-\frac {1}{p_1}$, $\theta_2 = \hat \theta$.
\item For $p_0<q\le 2$, $p_1>q$: $j_0=3$, $\theta_1 = s_*$,
$\theta_2=\tilde \theta$, $\theta_3 =\hat \theta$.
\item For $p_0<q$, $q>2$, $\max\{p_0, \, p_1\}\le 2$: $j_0=4$, $\theta_1 =s_* + \frac 12 -\frac
{1}{p_1}$, $\theta_2 =\frac{q(s_*+1/q-1/p_1)}{2}$, $\theta_3 =\hat
\theta +\frac 12 -\frac 1q$, $\theta_4 =\frac{q\hat \theta}{2}$.
\item For $p_0<q$, $q>2$, $\min\{p_0, \, p_1\}\ge 2$: $j_0=4$, $\theta_1 =s_*$, $\theta_2 =\frac{q(s_*+1/q-1/p_1)}{2}$,
$\theta_3 =\tilde \theta$, $\theta_4 =\frac{q\hat \theta}{2}$.
\item For $p_0<q$, $q>2$, $\min \{p_0, \, p_1\}<2< \max \{p_0, \,
p_1\}$: $j_0=5$, $\theta_1=s_* +\min\left\{\frac 12 -\frac
{1}{p_1}, \, 0\right\}$, $\theta_2 =\frac{q(s_*+1/q-1/p_1)}{2}$,
$\theta_3 =\tilde \theta$, $\theta_4 = \hat \theta +\frac 12-\frac
1q$, $\theta_5 =\frac{q\hat\theta}{2}$.
\end{enumerate}
\end{Def}

Given $d\in \N$, $r\in \N$, we set
\begin{align}
\label{s_st} s_*=\frac rd.
\end{align}

Consider the first example.

Let $\Omega \subset \left(-\frac 12, \, \frac 12\right)^d$,
$\Omega \in {\bf FC}(a)$, $\Gamma \subset\partial \Omega$ be an
$h$-set,
\begin{align}
\label{h_theta} h(t) = t^\theta,
\end{align}
$0\le \theta <d$, $r\in \N$, $1<p_0, \, p_1\le \infty$,
$1<q<\infty$, $\beta$, $\lambda$, $\sigma\in \R$,
\begin{align}
\label{gw} g(x) ={\rm \, \dist}^{-\beta}(x, \, \Gamma), \quad w(x)
={\rm dist}^{-\sigma}(x, \, \Gamma), \quad v(x) ={\rm \,
\dist}^{-\lambda}(x, \, \Gamma).
\end{align}

We denote $\mathfrak{Z}=(r, \, d, \, p_0, \, p_1, \, q, \, a, \,
c_*, \theta, \, \beta, \, \lambda, \, \sigma)$,
$\mathfrak{Z}_*=(\mathfrak{Z}, \, R)$, where $R={\rm diam}\,
\Omega$, $c_*$ is from Definition \ref{h_set}.

We set
\begin{align}
\label{til_th} \tilde \theta =\frac rd \cdot \frac{\sigma -\lambda
+ \frac{d-\theta}{q} -\frac{d-\theta}{p_0}}{\beta +\sigma
-\left(r+\frac{d}{p_0} -\frac{d}{p_1}\right)\left(1-\frac
{\theta}{d}\right)},
\end{align}
\begin{align}
\label{hat_th} \hat \theta =\frac{\sigma \left(\frac rd +\frac
1q-\frac{1}{p_1}\right) +\beta\left(\frac 1q -\frac{1}{p_0}\right)
-\lambda\left(\frac rd +\frac{1}{p_0} -\frac{1}{p_1}\right)}{\beta
+\sigma -\left(r+\frac{d}{p_0} -\frac{d}{p_1}\right)\left(1-\frac
{\theta}{d}\right)}.
\end{align}

We use the following notation for order equalities and
inequalities. Let $X$, $Y$ be sets, $f_1$, $f_2:\ X\times
Y\rightarrow \mathbb{R}_+$. We write $f_1(x, \,
y)\underset{y}{\lesssim} f_2(x, \, y)$ (or $f_2(x, \,
y)\underset{y}{\gtrsim} f_1(x, \, y)$) if for each $y\in Y$ there
exists $c(y)>0$ such that $f_1(x, \, y)\le c(y)f_2(x, \, y)$ for
all $x\in X$; $f_1(x, \, y)\underset{y}{\asymp} f_2(x, \, y)$ if
$f_1(x, \, y) \underset{y}{\lesssim} f_2(x, \, y)$ and $f_2(x, \,
y)\underset{y}{\lesssim} f_1(x, \, y)$.

\begin{Trm}
\label{main1} Let (\ref{s_st}), (\ref{h_theta}), (\ref{gw}),
(\ref{til_th}), (\ref{hat_th}) hold, $\frac rd +\min \left\{\frac
1q, \, \frac{1}{p_0}\right\}-\frac{1}{p_1}>0$, $$\min\left\{\beta
+ \sigma - r -\frac{d-\theta}{p_0} +\frac{d-\theta}{p_1}, \, \beta
+\sigma - r-\frac{d}{p_0} +\frac{d}{p_1}\right\}>0;$$ let $\tilde
\theta>0$ for $p_0\ge q$, $\hat \theta>0$ for $p_0<q$. The set $M$
is defined by (\ref{m_def}), the numbers $j_0\in \N$ and
$\theta_j$ are as in Definition \ref{theta_j}. Suppose that there
exists $j_*\in \{1, \, \dots, \, j_0\}$ such that $\theta_{j_*}<
\min _{j\ne j_*} \theta_j$. Then
$$
d_n(M, \, L_{q,v}(\Omega)) \underset{\mathfrak{Z}_*}{\asymp}
n^{-\theta_{j_*}}.
$$
\end{Trm}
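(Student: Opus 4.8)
The plan is to reduce the weighted, continuous width problem to a purely discrete, finite-dimensional one, exploiting that the weights are powers of $\dist(\cdot,\Gamma)$ and hence nearly constant on dyadic scales. First I would fix the decomposition $\Omega=\bigcup_{k\ge k_0}\Omega_k$, where $\Omega_k=\{x\in\Omega:\ 2^{-k-1}\le \dist(x,\Gamma)<2^{-k}\}$. Since $\Gamma$ is an $h$-set with $h(t)=t^\theta$, its $2^{-k}$-neighborhood is covered by $\asymp 2^{k\theta}$ balls of radius $\asymp 2^{-k}$, so the shell $\Omega_k$ splits into $m_k\asymp 2^{k\theta}$ essentially disjoint cubes $Q_{k,i}$ of side $\asymp 2^{-k}$, on each of which $g\asymp 2^{k\beta}$, $w\asymp 2^{k\sigma}$, $v\asymp 2^{k\lambda}$. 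The John condition, via Reshetnyak's theorem quoted above, guarantees that on each such cube the unweighted embedding $W^r_{p_1}\hookrightarrow L_q$ behaves exactly as on a standard cube, which is what licenses the localization; the hypothesis $\tfrac rd+\min\{\tfrac 1q,\tfrac1{p_0}\}-\tfrac1{p_1}>0$ is precisely the compactness condition for this embedding against the stronger of the two constraints.

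Next I would pass to a finite-dimensional model. On each $Q_{k,i}$ I would approximate $f\in M$ by a polynomial of degree $<r$; by the Bramble--Hilbert lemma the $L_q$-error is controlled by the local Sobolev seminorm $\|\nabla^r f\|_{L_{p_1}(Q_{k,i})}$ times the appropriate power of $2^{-k}$ coming from the scaling and the embedding gap. Collecting the polynomial coefficients over all cubes and rescaling each cube to unit size, the global constraints of (\ref{m_def}) aggregate across cubes as an $\ell_{p_1}$-ball (from $\nabla^r f/g$) intersected with an $\ell_{p_0}$-ball (from $wf$), to be approximated in a weighted $\ell_q$-norm that encodes $v$ together with the Jacobians. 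Organised by scale, the problem becomes the Kolmogorov width of a direct sum over $k$ of intersections $B^{m_k}_{p_0}\cap(\text{scaled }B^{m_k}_{p_1})$, the scale-$k$ summand carrying a factor $2^{k\nu}$ where $\nu$ is the combination of $\beta,\sigma,\lambda,\theta,r,d,p_0,p_1,q$ whose sign is fixed by the two positivity assumptions of the theorem (the denominator of (\ref{til_th}) and (\ref{hat_th})).

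The core computation is then the width of these finite-dimensional objects. For the upper bound I would invoke the sharp estimates for $d_n(B^m_{p_1},\ell^m_q)$ and for the width of the intersection with $B^m_{p_0}$, together with the standard rule for the width of a direct sum, and optimise the allocation of the budget $n$ across the scales $k$. The balance equations for this allocation are exactly what produce the exponents of Definition \ref{theta_j}: each regime of $(p_0,p_1,q)$ relative to one another and to $2$ selects a different branch of the finite-dimensional width formula, and the candidate rates $\theta_j$ are those contributed by the competing scales and by the two constraints. The assumption $\theta_{j_*}<\min_{j\ne j_*}\theta_j$ isolates a single dominant scale, so the direct sum is governed by one summand and no logarithmic factor survives. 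For the matching lower bound I would restrict $M$ to functions concentrated near the critical scale $j_*$ and invoke the corresponding finite-dimensional lower bounds of Kashin--Gluskin--Garnaev type for $B^m_p$ in $\ell^m_q$.

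The main obstacle is the finite-dimensional width analysis across the phase transitions at $p_0=2$, $p_1=2$, $q=2$: the factors $q/2$ and the shifts $\tfrac12-\tfrac1{p_1}$, $\tfrac12-\tfrac1q$ appearing in Definition \ref{theta_j} come precisely from the nontrivial ``Kashin'' behaviour of $d_n(B^m_p,\ell^m_q)$ when the parameters straddle $2$, and from the interaction of the two balls $B^m_{p_0}$ and $B^m_{p_1}$ in their intersection. Verifying in each of the nine cases that the optimal dimension allocation in the upper bound and the extremal test functions in the lower bound select the same branch — so that the two bounds coincide in order — is where the real difficulty lies; by comparison the dyadic decomposition, the $h$-set counting, and the polynomial approximation are routine once the reduction is set up.
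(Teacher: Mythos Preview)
Your high-level strategy --- localize to dyadic shells where the weights are constant, discretize via polynomial projection, reduce to finite-dimensional widths of intersections of $\ell_{p_0}$- and $\ell_{p_1}$-balls, and invoke Gluskin-type estimates --- is exactly the paper's strategy. The gap is in the decomposition itself: you use a \emph{single} index $k$ (the shell), with one polynomial per cube $Q_{k,i}$ of side $\asymp 2^{-k}$, giving a piece of fixed dimension $m_k\asymp 2^{k\theta}$ per shell. The paper's argument is genuinely two-parameter: each shell $\Omega_{t}$ is further refined by a hierarchy $T_{t,j,m}$ with ${\rm card}\,T_{t,j,m}\asymp 2^m$, producing finite-dimensional pieces $W_{t,m}$ indexed by $(t,m)$. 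This is not a cosmetic difference. With your scheme the $p_1$-constraint $\|\nabla^r f/g\|_{L_{p_1}}\le 1$ does \emph{not} become a ball constraint on the polynomial coefficients (since $\nabla^r P=0$ for $P\in{\cal P}_{r-1}$); it only controls the Bramble--Hilbert error, whose size depends on the refinement level. That is precisely what the second index $m$ encodes: in the paper's $W_{t,m}$ the $\ell_{p_1}$-radius is $2^{\mu_*k_*t}\cdot 2^{-m(s_*+1/q-1/p_1)}$ and the $\ell_{p_0}$-radius is $2^{-\alpha_*k_*t}\cdot 2^{m(1/p_0-1/q)}$, and it is the balance in $(t,m)$ that generates all the exponents in Definition~\ref{theta_j}. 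A single-$k$ model with fixed $m$ cannot even reproduce the unweighted rate $s_*=r/d$ on a cube (take $\theta=0$, one shell: your piece then has bounded dimension and the width does not decay).

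The lower bound has the same issue. You propose to concentrate near ``the critical scale $j_*$'', but $j_*\in\{1,\dots,j_0\}$ is an index of competing exponents, not a shell. The paper's five lower bounds (\ref{low_est1})--(\ref{low_est5}) come from packing bump functions at \emph{different} pairs $(t,m)$: e.g.\ $t=t_0$ fixed with $m$ chosen so that $\nu_{t_0,m}\asymp n$ yields the $s_*$-type bounds, while $t=\tilde t(n)$ with $m=\tilde m_{\tilde t(n)}$ yields $n^{-\tilde\theta}$, and $t=t(n)$, $t=\hat t(n)$ with their matching $m$'s give the $\hat\theta$-type bounds. A one-parameter family of test functions (one shell, natural-scale cubes) will recover at most one of these. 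So the fix is concrete: replace the single shell index by the pair (shell, refinement level), carry the two radii of $W_{t,m}$ through the Gluskin estimates, and optimize over both --- which is the content of \S\S2--3 of the paper.
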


Consider the second example.

Let $\Omega \subset \left(-\frac 12, \, \frac 12\right)^d$,
$\Omega \in {\bf FC}(a)$, let $\Gamma \subset \partial \Omega$ be
an $h$-set,
\begin{align}
\label{gvw} g(x)=\varphi_g({\rm dist}\, (x, \, \Gamma)), \quad
w(x) =\varphi_w({\rm dist}\, (x, \, \Gamma)), \quad v(x)
=\varphi_v({\rm dist}\, (x, \, \Gamma)),
\end{align}
\begin{align}
\label{h_gam} h(t) = |\log t|^{-\gamma}, \quad \gamma \ge 0,
\end{align}
\begin{align}
\label{gw_1} \varphi_g(t) = t^{-\beta} |\log t|^{\mu}, \quad
\varphi_w(t) = t^{-\sigma} |\log t|^{\alpha}, \quad \varphi_v(t) =
t^{-\lambda} |\log t|^{\nu},
\end{align}
where
\begin{align}
\label{lim_case} \beta + \lambda = r +\frac dq -\frac{d}{p_1},
\quad \sigma -\lambda = \frac{d}{p_0} - \frac{d}{q}.
\end{align}

We set
\begin{align}
\label{til_th2} \tilde \theta = \frac rd \cdot \frac{\alpha -\nu +
(\gamma+1)\left(\frac{1}{p_0}-\frac{1}{q}\right)}{\mu+\alpha
+(\gamma+1)\left(\frac rd+\frac{1}{p_0}-\frac{1}{p_1}\right)},
\end{align}
\begin{align}
\label{hat_th2} \hat \theta = \frac{\alpha\left(\frac{r}{d}+\frac
1q-\frac{1}{p_1}\right)+\mu\left(\frac 1q-\frac{1}{p_0}\right
)-\nu\left(\frac rd +\frac{1}{p_0}- \frac{1}{p_1}\right)}
{\mu+\alpha+(\gamma+1)\left(\frac
rd+\frac{1}{p_0}-\frac{1}{p_1}\right)}.
\end{align}

We denote $\mathfrak{Z}=(r, \, d, \, p_0, \, p_1, \, q, \, a, \,
c_*, h, \, \varphi_g, \, \varphi_w, \, \varphi_v)$,
$\mathfrak{Z}_*=(\mathfrak{Z}, \, R)$, where $R={\rm diam}\,
\Omega$.

\begin{Trm}
\label{main2} Let $\frac rd +\min\left\{\frac 1q, \,
\frac{1}{p_0}\right\} -\frac{1}{p_1}>0$, let (\ref{s_st}),
(\ref{gvw}), (\ref{h_gam}), (\ref{gw_1}), (\ref{lim_case}),
(\ref{til_th2}), (\ref{hat_th2}) hold, $\min \{\mu+\alpha
+(\gamma+1)(1/p_0-1/p_1), \, \mu + \alpha\}>0$. Suppose that
$\tilde \theta>0$ for $p_0\ge q$, $\hat\theta>0$ for $p_0<q$. The
set $M$ is defined by (\ref{m_def}), $j_0\in \N$ and $\theta_j$
are as in Definition \ref{theta_j}. Suppose that there exists
$j_*\in \{1, \, \dots, \, j_0\}$ such that $\theta_{j_*}< \min
_{j\ne j_*} \theta_j$. Then
$$
d_n(M, \, L_{q,v}(\Omega)) \underset{\mathfrak{Z}_*}{\asymp}
n^{-\theta_{j_*}}.
$$
\end{Trm}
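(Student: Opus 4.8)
The plan is to follow the discretization scheme that underlies Theorem \ref{main1}, replacing the power weights of the first example by the weights (\ref{gw_1}) with logarithmic corrections and the law $t^\theta$ by $h(t)=|\log t|^{-\gamma}$ from (\ref{h_gam}); the two relations (\ref{lim_case}) will be exactly what is needed to turn the problem into a purely logarithmic one. First I would use the John condition (Definition \ref{fca}) to build a multiresolution decomposition of $\Omega$ adapted to $\Gamma$: for each scale $k\in\N$ cover the layer $\{x\in\Omega:\ {\rm dist}(x,\Gamma)\asymp 2^{-k}\}$ by subdomains $\Omega_\xi$ of diameter $\asymp 2^{-k}$, on each of which the chain condition supplies a weighted Bramble--Hilbert / Poincar\'e estimate controlling the error of approximation by polynomials of degree $<r$ through the local seminorm $\|\nabla^r f\|_{L_{p_1}(\Omega_\xi)}$. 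By Definition \ref{h_set} and (\ref{h_gam}) the number $m_k$ of subdomains in the layer is of order $h(2^{-k})^{-1}\asymp k^{\gamma}$ (up to the constant $c_*$), so each scale contributes a finite--dimensional block of dimension $N_k\asymp k^{\gamma}$ and the total dimension through scale $K$ is $\sum_{k\le K}k^\gamma\asymp K^{\gamma+1}$.

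On a subdomain of the $k$-th layer one has ${\rm dist}(\cdot,\Gamma)\asymp 2^{-k}$, so by (\ref{gw_1}) the weights are constant up to order, with $g\asymp 2^{k\beta}k^{\mu}$, $w\asymp 2^{k\sigma}k^{\alpha}$, $v\asymp 2^{k\lambda}k^{\nu}$. Rescaling each $\Omega_\xi$ to the unit cube and absorbing the factor coming from $v$ into the coefficients, the constraints defining $M$ in (\ref{m_def}) turn the $k$-th block, up to order, into an intersection of balls $\{b\in\R^{N_k}:\ \|b\|_{\ell_{p_1}}\le \rho_1(k),\ \|b\|_{\ell_{p_0}}\le \rho_0(k)\}$ measured in $\ell_q^{N_k}$. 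Here each radius carries a power $2^{k(\cdots)}$ and a logarithmic factor $k^{(\cdots)}$; the exponent of $2^k$ in $\rho_1(k)$ is $(\beta+\lambda)-r+d/p_1-d/q$ and in $\rho_0(k)$ is $(d/p_0-d/q)-(\sigma-\lambda)$, and both vanish precisely by the two identities (\ref{lim_case}). Thus $\rho_1(k)\asymp k^{\mu+\nu}$ and $\rho_0(k)\asymp k^{\nu-\alpha}$ are purely logarithmic, which is the feature distinguishing the present limiting case from Theorem \ref{main1} and the reason the numerators of $\tilde\theta,\hat\theta$ in (\ref{til_th2}),(\ref{hat_th2}) contain only $\mu,\alpha,\nu$ and the factor $\gamma+1$.

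It then remains to estimate the Kolmogorov widths of the direct sum $\bigoplus_k\bigl(B_{p_1}^{N_k}(\rho_1(k))\cap B_{p_0}^{N_k}(\rho_0(k))\bigr)$ in $\bigoplus_k\ell_q^{N_k}$. For this I would insert the known order estimates for widths of intersections of two finite--dimensional balls in $\ell_q^{N}$ (the Kashin and Gluskin type results, as used for Theorem \ref{main1}), whose shape changes according to the position of $p_0,p_1,q$ relative to one another and to $2$; this is the source of the nine regimes of Definition \ref{theta_j}. Assembling the local estimates in the $\ell_q$-direct sum and optimizing the allocation $\sum_k n_k\le n$ of approximating dimensions among the scales yields the upper bound: writing the critical scale as $K$ with $K^{\gamma+1}\asymp n$, i.e. $K\asymp n^{1/(\gamma+1)}$, each candidate value $\theta_j$ of Definition \ref{theta_j} corresponds to one mechanism (one competing ball estimate, realized on the scales $k\asymp K$, versus the pure smoothness rate $s_*$) governing the sum, and the hypothesis $\theta_{j_*}<\min_{j\ne j_*}\theta_j$ singles out a unique bottleneck, producing the order $n^{-\theta_{j_*}}$. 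The matching lower bound follows by embedding into $M$ the finite--dimensional body that realizes this critical scale and invoking the corresponding lower estimate for widths of ball intersections.

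The hardest part is the bookkeeping of the logarithmic factors in this limiting regime. Once the powers of $2^k$ have been removed by (\ref{lim_case}), the summation over scales is no longer geometric (as in Theorem \ref{main1}, where a single endpoint scale dominates) but only power--of--$k$ weighted, so both the optimal dimension allocation and the resulting exponent must be extracted from sums of the form $\sum_{k\le K}k^{(\cdots)}\asymp K^{(\cdots)+1}$ rather than from a geometric series, and the upper and lower discrete estimates must be shown to agree at this finer resolution. This is most delicate in the mixed cases $\min\{p_0,p_1\}<2<\max\{p_0,p_1\}$ and $p_1<2<q$, where the finite--dimensional width of a ball intersection already exhibits a two--term (Gluskin) behaviour; there the logarithmic corrections interact with the change of regime in the ball widths, and the strict minimality $\theta_{j_*}<\min_{j\ne j_*}\theta_j$ is what rules out residual logarithmic factors and secures the clean order $n^{-\theta_{j_*}}$.
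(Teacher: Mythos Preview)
Your diagnosis of the limiting mechanism is correct: the two identities (\ref{lim_case}) annihilate the powers of $2^k$ in the rescaled local constraints, leaving purely logarithmic radii $\rho_1(k)\asymp k^{\mu+\nu}$ and $\rho_0(k)\asymp k^{\nu-\alpha}$, and this is indeed why $\gamma+1$, $\mu$, $\alpha$, $\nu$ are the only data entering (\ref{til_th2})--(\ref{hat_th2}). But the paper does not carry out the polynomial-in-$k$ optimization you describe. It applies the abstract theorems of \S2--\S3 verbatim after a single reindexing trick: instead of the linear scale $j$ (with ${\rm dist}(\cdot,\Gamma)\asymp 2^{-\overline{s}j}$), it bundles the layers dyadically in $\log j$, setting $\{\Omega_{t,i}\}_{i\in\hat J_t}=\{\Omega[\eta_{j,i}]\}_{2^t\le \overline{s}j<2^{t+1},\,i\in\tilde I_j}$. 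Under this regrouping ${\rm card}\,\hat J_t\asymp\sum_{j\asymp 2^t}j^\gamma\asymp 2^{(\gamma+1)t}$ and the logarithmic weights become geometric, $j^{\mu+\nu}\asymp 2^{(\mu+\nu)t}$ and $j^{\nu-\alpha}\asymp 2^{(\nu-\alpha)t}$. Assumptions \ref{supp1}--\ref{supp5} then hold with $k_*=1$, $\gamma_*=\gamma+1$, $\mu_*=\mu+\nu$, $\alpha_*=\alpha-\nu$, and both the upper and lower bounds follow immediately, the lower one via the same regrouping $\{\varphi^{t,m}_\nu\}=\{\psi_{j,i,l}\}_{2^t\le j<2^{t+1}}$. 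What you flag as the hardest part --- replacing geometric series by sums $\sum_{k\le K}k^{(\cdots)}$ and redoing the mixed Gluskin regimes at logarithmic resolution --- simply does not arise.

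There is also a structural gap in your discretization. Your blocks at scale $k$ carry dimension $N_k\asymp k^\gamma$, one polynomial per subdomain $\Omega_\xi$; this encodes the tail behaviour toward $\Gamma$ but not the approximation \emph{within} each $\Omega_\xi$. In the paper's scheme this is the second index $m$ of Assumption \ref{supp3}: each $\Omega_{t,j}$ is further partitioned into $\asymp 2^m$ pieces, and it is the interplay between $t$ and $m$ through (\ref{fpef})--(\ref{pef}) and the sets $W_{t,m}$ in (\ref{w_tm}) that produces the rate $s_*$ and the Gluskin-type exponents of Definition \ref{theta_j}. With only one polynomial per $\Omega_\xi$ the local error is $\asymp j^{\mu+\nu}$ (equivalently $2^{(\mu+\nu)t}$ at $m=0$ in (\ref{fpef})), which need not even tend to zero; the widths of $B^{N_k}_{p_1}\cap B^{N_k}_{p_0}$ in $\ell_q^{N_k}$ with $N_k\asymp k^\gamma$ cannot supply the missing $n^{-s_*}$. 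To make your outline work you must reinstate the inner refinement, after which the dyadic-in-$\log j$ regrouping is precisely the device that collapses the resulting two-parameter optimization back into \S2.
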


Consider the third example.

Let $\Omega = \R^d$,
\begin{align}
\label{gw_2} g(x)=(1+|x|)^\beta, \quad w(x) = (1+|x|)^\sigma,
\quad v(x) = (1 + |x|)^\lambda.
\end{align}
We set
\begin{align}
\label{til_theta3} \tilde \theta = \frac rd \cdot \frac{\sigma
-\lambda + \frac{d}{p_0}-\frac{d}{q}}{\beta +\sigma
+r+\frac{d}{p_0} - \frac{d}{p_1}},
\end{align}
\begin{align}
\label{hat_theta3} \hat \theta =
\frac{\sigma\left(\frac{r}{d}+\frac 1q- \frac{1}{p_1}\right)
+\beta(\frac 1q- \frac{1}{p_0}) -\lambda \left(\frac rd
+\frac{1}{p_0} -\frac{1}{p_1}\right)}{\beta +\sigma
+r+\frac{d}{p_0} - \frac{d}{p_1}}.
\end{align}

Denote $\mathfrak{Z}=(r, \, d, \, p_0, \, p_1, \, q, \, \beta, \,
\lambda, \, \sigma)$.

\begin{Trm}
\label{main3} Let (\ref{s_st}), (\ref{gw_2}), (\ref{til_theta3}),
(\ref{hat_theta3}) hold, and let $\frac rd+\min\left\{\frac 1q, \,
\frac{1}{p_0}\right\}-\frac{1}{p_1}>0$, $\beta+\sigma+r
+d/p_0-d/p_1>0$. Suppose that $\tilde \theta>0$ for $p_0\ge q$,
$\hat\theta>0$ for $p_0<q$. The set $M$ is defined by
(\ref{m_def}), $j_0\in \N$ and $\theta_j$ are as in Definition
\ref{theta_j}. Suppose that there is $j_*\in \{1, \, \dots, \,
j_0\}$ such that $\theta_{j_*}< \min _{j\ne j_*} \theta_j$. Then
$$
d_n(M, \, L_{q,v}(\R^d)) \underset{\mathfrak{Z}}{\asymp}
n^{-\theta_{j_*}}.
$$
\end{Trm}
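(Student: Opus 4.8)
The plan is to reduce Theorem~\ref{main3} to Theorem~\ref{main1} by a change of variables that compactifies $\R^d$, turning the growth of the weights at infinity into a power singularity at a single point. Since the width of a pair is invariant (up to constants) under an isomorphism of the pair, and is insensitive to replacing each of the weights $g,w,v$ by a pointwise-comparable one (for $v$ this only changes the norm of $L_{q,v}$ by a constant; for $g,w$ it changes the set $M$ by a fixed dilation), it suffices to transplant the problem onto a bounded John domain with power weights and then invoke Theorem~\ref{main1}.

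First I would fix a diffeomorphism $\Phi\colon\R^d\to\Omega'$, where $\Omega'\subset\left(-\frac12,\frac12\right)^d$ is a bounded John domain, such that $\Phi$ agrees (up to a fixed rescaling) with the inversion $x\mapsto x/|x|^2$ for $|x|\ge 2$ and with a fixed smooth embedding for $|x|\le 1$. The image of the point at infinity is then a single boundary point $\{y_*\}=\partial\Omega'\setminus\Phi(\R^d)$, which is an $h$-set with $h(t)=t^\theta$, $\theta=0$ (the point mass $\delta_{y_*}$ gives $\mu(B_t(y_*))\asymp 1$); with $\Gamma=\{y_*\}$ one has $|x|\asymp\dist(y,\Gamma)^{-1}$ near $\Gamma$, while on the core all weights are $\asymp 1\asymp\dist(\cdot,\Gamma)^{-c}$. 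Writing $\tilde f=f\circ\Phi$ and using $dx\asymp\dist(y,\Gamma)^{-2d}\,dy$ together with $\|D\Phi^{-1}(y)\|\asymp\dist(y,\Gamma)^{2}$ near $\Gamma$, the leading parts of $\|\nabla^r f/g\|_{L_{p_1}}$, $\|wf\|_{L_{p_0}}$ and $\|vf\|_{L_q}$ become the corresponding quantities for $\tilde f$ on $\Omega'$ with power weights $\dist(\cdot,\Gamma)^{-\beta_1},\dist(\cdot,\Gamma)^{-\sigma_1},\dist(\cdot,\Gamma)^{-\lambda_1}$, where $\beta_1=\beta+2r-\frac{2d}{p_1}$, $\sigma_1=\sigma+\frac{2d}{p_0}$, $\lambda_1=\lambda+\frac{2d}{q}$.

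A direct substitution of $(\beta_1,\sigma_1,\lambda_1)$ and $\theta=0$ into (\ref{til_th}), (\ref{hat_th}) reproduces exactly (\ref{til_theta3}), (\ref{hat_theta3}): in the numerator of $\tilde\theta$ the extra terms collapse to $\sigma-\lambda+d/p_0-d/q$, in the denominator to $\beta+\sigma+r+d/p_0-d/p_1$, and in the numerator of $\hat\theta$ all contributions involving $r$, $1/p_0$, $1/p_1$, $1/q$ cancel, leaving $\sigma\left(\frac rd+\frac1q-\frac1{p_1}\right)+\beta\left(\frac1q-\frac1{p_0}\right)-\lambda\left(\frac rd+\frac1{p_0}-\frac1{p_1}\right)$. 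The same substitution turns both entries of the $\min$ in the hypothesis of Theorem~\ref{main1} (equal when $\theta=0$) into $\beta+\sigma+r+d/p_0-d/p_1>0$, and leaves $\frac rd+\min\{1/q,1/p_0\}-1/p_1>0$ unchanged; the positivity requirements on $\tilde\theta,\hat\theta$ match as well. Hence $j_0$, the numbers $\theta_j$ and the index $j_*$ coincide for the two problems, and since $a$, $c_*$, $\theta$, $R$ are now absolute constants depending only on $d$, Theorem~\ref{main1} yields $d_n(M,L_{q,v}(\R^d))\underset{\mathfrak Z}{\asymp}n^{-\theta_{j_*}}$.

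The main obstacle is to make ``leading part'' precise, i.e.\ to prove that $\Phi$ induces an isomorphism of pairs $(M,L_{q,v}(\R^d))\cong(M',L_{q,v'}(\Omega'))$ up to constants depending only on $\mathfrak Z$. As $\Phi$ is not affine, the chain rule expresses $\nabla^r f$ through $\nabla^j\tilde f$ with $j\le r$ and coefficients that are powers of $\dist(y,\Gamma)$; the term $j=r$ gives the clean weighted seminorm, but the lower-order terms must be absorbed. This is where the two-sided structure of $M$ and the standing assumptions $\frac rd+\min\{1/q,1/p_0\}-1/p_1>0$ and $\beta+\sigma+r+d/p_0-d/p_1>0$ enter: on each dyadic annulus $\{2^{k-1}\le|x|<2^k\}$ (the preimage of a layer $\{\dist(y,\Gamma)\asymp 2^{-k}\}$) one rescales to unit size and uses the weighted Poincar\'e/embedding inequalities underlying Theorem~\ref{main1} to bound the intermediate weighted derivatives by the top-order seminorm and the $L_{p_0,w}$-norm, with constants summable in $k$. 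Equivalently, one may bypass $\Phi$ and run the dyadic-annuli decomposition directly on $\R^d$: each annulus rescales to a fixed unit-scale weighted Sobolev ball, and assembling these blocks produces the same discrete extremal problem whose solution, in the various ranges of $(p_0,p_1,q)$, is encoded by the exponents $\theta_j$ of Definition~\ref{theta_j}; the hypothesis $\theta_{j_*}<\min_{j\ne j_*}\theta_j$ selects the single dominant block and yields the matching upper and lower bounds.
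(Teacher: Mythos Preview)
Your parameter matching is correct and the inversion route can be made rigorous, but the paper takes the direct path you mention only in your final sentence: it does not reduce to Theorem~\ref{main1} at all. Instead it feeds the dyadic-annulus decomposition of $\R^d$ straight into the abstract machinery of \S\S2--3. For the upper estimate one sets $\Omega_0=(-1,1)^d$, $\Omega_t=(-2^t,2^t)^d\setminus[-2^{t-1},2^{t-1}]^d$ for $t\ge1$, $\hat J_t=\{1\}$, and verifies Assumptions~\ref{supp1}--\ref{supp5} with $\gamma_*=0$, $k_*=1$, $\mu_*=\beta+\lambda+r+d/q-d/p_1$, $\alpha_*=\sigma-\lambda+d/p_0-d/q$; since the weights are essentially constant on each annulus, all the required bounds are the unit-cube ones after rescaling. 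For the lower estimate one plants disjoint bumps on a dyadic grid in $(2^{t-1},2^t)^d$ and checks (\ref{nu_tm})--(\ref{func_est}) with the same parameters. Substituting these values into (\ref{til_theta}) gives (\ref{til_theta3}), (\ref{hat_theta3}) immediately.

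Your inversion buys a conceptual explanation of why the $\R^d$ problem and the point-singularity case of Theorem~\ref{main1} (with $\theta=0$) share the same list of exponents $\theta_j$, but it does not shortcut the analysis. The Fa\`a di Bruno remainder terms $\nabla^j\tilde f$, $j<r$, appear with weights $\dist(\cdot,\Gamma)^{\beta_1-(r-j)}$, and absorbing them requires exactly the layer-by-layer argument you sketch: rescale each shell to unit size, subtract the polynomial projection to control $\|\nabla^j\hat f\|_{L_{p_1}}$ by $\|\nabla^r\hat f\|_{L_{p_1}}+\|\hat f\|_{L_{p_0}}$, then sum in $k$ using $\beta+\sigma+r+d/p_0-d/p_1>0$ for geometric decay of the $L_{p_0}$ contribution. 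That is essentially the content of the paper's direct verification of Assumption~\ref{supp5} on the annuli, now hidden inside an isomorphism-of-pairs lemma with chain-rule combinatorics on top. If you pursue this route you must spell the absorption out; your identification of $\beta+\sigma+r+d/p_0-d/p_1>0$ as the summability condition across layers is exactly right, but it is the crux and cannot be left as ``constants summable in $k$''.
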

Notice that in \cite{myn_otel} the problem on estimating the
linear widths of the set $M$ from the third example was considered
for $p_0=p_1$; for $q\le 2$ and $\frac{p_1}{p_1-1}\le 2$ order
estimates for the linear widths were obtained.

The paper is organized as follows. In \S 2 the upper estimate for
the Kolmogorov widths of the abstract function classes
$BX_{p_1}(\Omega)\cap BX_{p_0}(\Omega)$ is obtained; in \S 3 the
lower estimate is obtained. In \S 4--5 these results are applied
for proofs of Theorems \ref{main1}--\ref{main3}.

\section{Upper estimates for widths of $BX_{p_1}(\Omega)\cap BX_{p_0}(\Omega)$.}

Let $(\Omega, \, \Sigma, \, {\rm mes})$ be a measure space. We say
that sets $A$, $B\subset \Omega$ are disjoint if ${\rm mes}(A\cap
B)=0$. Let $E$, $E_1, \, \dots, \, E_m\subset \Omega$ be
measurable sets, $m\in \N\cup \{\infty\}$. We say that
$\{E_i\}_{i=1}^m$ is a partition of $E$ if the sets $E_i$ are
disjoint and ${\rm mes}\left(\left(\cup _{i=1}^m
E_i\right)\bigtriangleup E\right)=0$.

We denote by $\chi_E(\cdot)$ the indicator function of $E$.

Let $1< p_0, \, p_1\le \infty$, $1\le q< \infty$. Suppose that for
each measurable subset $E\subset \Omega$ the following spaces are
defined (see \cite{vas_width_raspr}):
\begin{itemize}
\item the spaces $X_{p_i}(E)$ with seminorms
$\|\cdot\|_{X_{p_i}(E)}$, $i=0, \, 1$,
\item the Banach space $Y_q(E)$ with norm $\|\cdot\|_{Y_q(E)}$,
\end{itemize}
which satisfy the following conditions:
\begin{enumerate}
\item $X_{p_i}(E)=\{f|_E:\; f\in X_{p_i}(\Omega)\}$, $i=0, \, 1$, $Y_q(E)=\{f|_E:\; f\in
Y_q(\Omega)\}$;
\item if ${\rm mes}\, E=0$, then $\dim \, Y_q(E)=\dim \, X_{p_i}(E)=0$, $i=0, \, 1$;
\item if $E\subset \Omega$, $E_j\subset \Omega$ ($j\in \N$)
are measurable subsets, $E=\sqcup _{j\in \N} E_j$, then
$$
\|f\|_{X_{p_i}(E)}=\left\| \bigl\{
\|f|_{E_j}\|_{X_{p_i}(E_j)}\bigr\}_{j\in
\N}\right\|_{l_{p_i}},\quad f\in X_{p_i}(E), \; i=0, \, 1,
$$
$$
\|f\|_{Y_q(E)}=\left\| \bigl\{\|f|_{E_j}\|
_{Y_q(E_j)}\bigr\}_{j\in \N}\right\|_{l_q}, \quad f\in Y_q(E);
$$
\item if $E\in \Sigma$, $f\in Y_q(\Omega)$, then $f\cdot \chi_E\in
Y_q(\Omega)$.
\end{enumerate}

We denote $$BX_{p_i}(\Omega) = \{f\in X_{p_i}(\Omega):\;
\|f\|_{X_{p_i}(\Omega)}\le 1\}, \quad i=0, \, 1.$$

Let ${\cal P}(\Omega)\subset X_{p_1}(\Omega)$  be a subspace of
dimension $r_0\in \N$. For each measurable subset $E\subset
\Omega$ we denote
$${\cal P}(E)=\{P|_E:\; P\in {\cal P}(\Omega)\}.$$ Let $G\subset
\Omega$ be a measurable subset, and let $T$ be a partition of $G$.
We set
$$
{\cal S}_{T}(\Omega)=\{f:\Omega\rightarrow \R:\, f|_E\in {\cal
P}(E),\; E\in T, \; f|_{\Omega\backslash G}=0\}.
$$
If $T$ is finite and for each $E\in T$ the inclusion ${\cal
P}(E)\subset Y_q(E)$ holds, then ${\cal S}_{T}(\Omega)\subset
Y_q(\Omega)$ (see property 4).

For each finite partition $T=\{E_j\}_{j=1}^n$ of a set $E$ and for
each function $f\in Y_q(\Omega)$ we set
$$
\|f\|_{p_i,q,T}=\left(\sum \limits _{j=1}^n
\|f|_{E_j}\|_{Y_q(E_j)} ^{p_i}\right)^{\frac{1}{p_i}}.
$$

Suppose that there exist a partition $\{\Omega _{t,j}\}_{t\ge t_0,
\, j\in \hat J_t}$ of $\Omega$ into measurable subsets ($t_0\in
\Z_+$) and numbers $c\ge 1$, $s_*>\left(\frac{1}{p_1} -\frac
1q\right)_+$, $k_*\in \N$, $\gamma_*\ge 0$, $\alpha_*\in \R$,
$\mu_*\in \R$ such that the following assumptions hold.

\begin{Supp} \label{supp1}
The inclusion $X_{p_1}(\Omega_{t,j})\subset Y_q(\Omega_{t,j})$
holds for each $t\ge t_0$, $j\in \hat J_t$.
\end{Supp}
\begin{Supp} \label{supp2}
The following estimate holds:
\begin{align}
\label{card_jt} {\rm card}\, \hat J_t\le c\cdot 2^{\gamma_*k_*t},
\quad t\ge t_0.
\end{align}
\end{Supp}
\begin{Supp} \label{supp3}
For each $t\ge t_0$, $j\in \hat J_t$ there is a sequence of
partitions $\{T_{t,j,m}\}_{m\in \Z_+}$ of the set $\Omega_{t,j}$
such that
\begin{align}
\label{ttj0} T_{t,j,0} = \{\Omega_{t,j}\}, \quad {\rm card}\,
T_{t,j,m}\le c\cdot 2^m,
\end{align}
and for all $E\in T_{t,j,m}$
\begin{align}
\label{card_e} {\rm card}\, \{E'\in T_{t,j,m\pm 1}:\; {\rm mes}\,
(E\cap E')>0\} \le c.
\end{align}
\end{Supp}
\begin{Supp} \label{supp4}
If $p_0\ge q$, then for each $E\in T_{t,j,m}$
\begin{align}
\label{f_yqe} \|f\|_{Y_q(E)}\le c\cdot 2^{-\alpha_*k_*t}\cdot
2^{m\left(\frac{1}{p_0}-\frac 1q\right)}\|f\|_{X_{p_0}(E)}.
\end{align}
\end{Supp}
\begin{Supp} \label{supp5}
For each $E\in T_{t,j,m}$ there is a linear continuous projection
$P_E:Y_q(\Omega) \rightarrow {\cal P}(\Omega)$ such that for all
$f\in X_{p_1}(\Omega)\cap X_{p_0}(\Omega)$
\begin{align}
\label{fpef} \|f-P_Ef\|_{Y_q(E)}\le c \cdot 2^{\mu_*k_*t}\cdot
2^{-m\left(s_*+\frac 1q-\frac{1}{p_1}\right)} \|f\| _{X_{p_1}(E)},
\end{align}
\begin{align}
\label{pef} \|P_Ef\|_{Y_q(E)}\le c\cdot 2^{-\alpha_*k_*t}\cdot
2^{m\left(\frac{1}{p_0}-\frac 1q\right)}\|f\|_{X_{p_0}(E)}.
\end{align}
\end{Supp}

Let
$$
\mathfrak{Z}_0=(p_0, \, p_1, \, q, \, r_0, \, c, \, k_*,\, s_*, \,
\gamma_*, \, \mu_*, \, \alpha_*).
$$

We define the partitions
$$
T_{t,m} = \{E\in T_{t,j,m}:\; j\in \hat J_t\}, \quad \hat T_{t,m}
= \{E\cap E':\;  E\in T_{t,m}, \; E'\in T_{t,m+1}\}.
$$
Then
\begin{align}
\label{ttm} {\rm card}\, T_{t,m} \stackrel{(\ref{card_jt}),
(\ref{ttj0})}{\underset{\mathfrak{Z}_0}{\lesssim}} 2^{\gamma
_*k_*t}\cdot 2^m, \quad {\rm card}\, \hat T_{t,m}
\stackrel{(\ref{card_e})}{\underset{\mathfrak{Z}_0}{\lesssim}}
2^{\gamma _*k_*t}\cdot 2^m.
\end{align}

Let $$\nu'_{t,m} = \dim\, {\cal S}_{ T_{t,m}}(\Omega), \quad
\nu_{t,m} = \dim\, {\cal S}_{\hat T_{t,m}}(\Omega).$$

We define the operator $P_{t,m}:Y_q(\Omega)\rightarrow
Y_q(\Omega)$ by
$$
P_{t,m}f = \sum \limits _{j\in \hat J_t} \sum \limits _{E\in
T_{t,j,m}} P_Ef \cdot \chi _E.
$$
Then ${\rm Im}\, P_{t,m} \subset {\cal S}_{T_{t,m}}(\Omega)$,
${\rm Im}\, (P_{t,m+1}-P_{t,m})\subset {\cal S}_{\hat
T_{t,m}}(\Omega)$,
\begin{align}
\label{rk_ptm} {\rm rk}\, P_{t,m}\le \nu'_{t,m}
\stackrel{(\ref{ttm})}{\underset{\mathfrak{Z}_0}{\lesssim}}
2^{\gamma _*k_*t}\cdot 2^m, \quad {\rm rk}\,
(P_{t,m+1}-P_{t,m})\le \nu_{t,m}
\stackrel{(\ref{ttm})}{\underset{\mathfrak{Z}_0}{\lesssim}}
2^{\gamma _*k_*t}\cdot 2^m.
\end{align}

We set $\Omega_t=\cup _{j\in \hat J_t} \Omega _{t,j}$.

Applying the H\"{o}lder's inequality and the inequality
$$
\left(\sum \limits _{i=1}^k |x_i|^q\right)^{1/q} \le \left(\sum
\limits _{i=1}^k |x_i|^p\right)^{1/p}, \quad (x_i)_{i=1}^k\in
\R^k, \quad p\le q,
$$
we obtain that for $f\in BX_{p_1}(\Omega)\cap BX_{p_0}(\Omega)$
the following estimates hold:
\begin{align}
\label{f_m_ptm} \|f-P_{t,m}f\|_{Y_q(\Omega_t)}
\stackrel{(\ref{fpef}),
(\ref{ttm})}{\underset{\mathfrak{Z}_0}{\lesssim}}
2^{k_*t\left(\mu_*+\gamma_*(1/q-1/p_1)_+\right)}\cdot
2^{-m(s_*-(1/p_1-1/q)_+)},
\end{align}
\begin{align}
\label{ptm1f_m_ptmf} \|P_{t,m+1}f - P_{t,m}f\| _{p_1,q,\hat
T_{t,m}} \stackrel{(\ref{card_e}),(\ref{fpef})}
{\underset{\mathfrak{Z}_0}{\lesssim}} 2^{\mu_*k_*t}\cdot
2^{-m(s_*+1/q-1/p_1)},
\end{align}
\begin{align}
\label{ptm_p0} \|P_{t,m}f\|_{p_0,q,T_{t,m}}
\stackrel{(\ref{pef})}{\underset{\mathfrak{Z}_0}{\lesssim}}
2^{-\alpha_*k_*t}\cdot 2^{m(1/p_0-1/q)},
\end{align}
\begin{align}
\label{ptm_p011111} \|P_{t,m}f\|_{Y_q(\Omega_t)}
\stackrel{(\ref{pef})}{\underset{\mathfrak{Z}_0}{\lesssim}}
2^{-\alpha_*k_*t}\cdot 2^{m(1/p_0-1/q)}, \quad \text{if} \quad
p_0\le q,
\end{align}
\begin{align}
\label{ptm1_m_ptm_p0} \|P_{t,m+1}f - P_{t,m}f\| _{p_0,q,\hat
T_{t,m}} \stackrel{(\ref{card_e}),
(\ref{pef})}{\underset{\mathfrak{Z}_0}{\lesssim}}
2^{-\alpha_*k_*t}\cdot 2^{m(1/p_0-1/q)}.
\end{align}

We denote by $l_p^\nu$ the space $\R^\nu$ with the norm $\|(x_1,
\, \dots, \, x_\nu)\|_{l_p^\nu}=\left(\sum \limits _{j=1}^\nu
|x_j|^p\right)^{1/p}$. By $B_p^\nu$ we denote the unit ball in
$l_p^\nu$.

\begin{Sta}
There exist isomorphisms $A_{t,m}: {\cal S}_{\hat T_{t,m}}(\Omega)
\rightarrow \R^{\nu_{t,m}}$ and $A'_{t,m}: {\cal S}_{
T_{t,m}}(\Omega) \rightarrow \R^{\nu'_{t,m}}$ such that
\begin{align}
\label{atm} \|A_{t,m}f\|_{l_{p_1}^{\nu_{t,m}}}
\underset{\mathfrak{Z}_0}{\lesssim} \|f\|_{p_1,q,\hat T_{t,m}},
\quad \|A_{t,m}f\|_{l_{p_0}^{\nu_{t,m}}}
\underset{\mathfrak{Z}_0}{\lesssim} \|f\|_{p_0,q,\hat T_{t,m}},
\quad f\in {\cal S}_{\hat T_{t,m}}(\Omega),
\end{align}
\begin{align}
\label{atm1} \|A_{t,m}^{-1}(c_j)_{j=1}^{\nu_{t,m}}\|
_{Y_q(\Omega)} \underset{\mathfrak{Z}_0}{\lesssim}
\|(c_j)_{j=1}^{\nu_{t,m}}\| _{l_q^{\nu_{t,m}}},
\end{align}
\begin{align}
\label{atm2} \|A'_{t,m}f\|_{l_{p_0}^{\nu'_{t,m}}}
\underset{\mathfrak{Z}_0}{\lesssim} \|f\|_{p_0,q,T_{t,m}}, \quad
f\in {\cal S}_{T_{t,m}}(\Omega), \quad
\|(A'_{t,m})^{-1}(c_j)_{j=1}^{\nu'_{t,m}}\| _{Y_q(\Omega)}
\underset{\mathfrak{Z}_0}{\lesssim} \|(c_j)_{j=1}^{\nu'_{t,m}}\|
_{l_q^{\nu'_{t,m}}}.
\end{align}
\end{Sta}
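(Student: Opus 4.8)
The plan is to construct $A_{t,m}$ by gluing together local coordinate systems on the cells of $\hat T_{t,m}$, using that on each cell the space of admissible functions is finite-dimensional of dimension at most $r_0$. Write $\hat T_{t,m}=\{E_1,\dots,E_N\}$ and $d_j=\dim {\cal P}(E_j)$. Since the cells are pairwise disjoint and every $f\in {\cal S}_{\hat T_{t,m}}(\Omega)$ vanishes off their union, $f$ is determined by the independent restrictions $f|_{E_j}\in {\cal P}(E_j)$, whence $\nu_{t,m}=\sum_{j=1}^N d_j$.

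First I would produce, for each cell $E$, a linear isomorphism $\phi_E\colon ({\cal P}(E),\|\cdot\|_{Y_q(E)})\to l_q^{d_E}$ whose distortion depends only on $r_0$ and $q$. Here $({\cal P}(E),\|\cdot\|_{Y_q(E)})$ is a genuine normed space: the inclusion ${\cal P}(E)\subset Y_q(E)$ (available from Assumption \ref{supp1}) makes $\|\cdot\|_{Y_q(E)}$ nondegenerate on ${\cal P}(E)$, and its dimension $d_E$ is at most $r_0$. By the finite-dimensional comparison of norms --- John's theorem to pass to a Euclidean norm, then the equivalence of $l_2^{d_E}$ and $l_q^{d_E}$ --- every such space lies within Banach--Mazur distance $C(r_0,q)$ of $l_q^{d_E}$, so after a scaling one obtains
$$
\|P\|_{Y_q(E)}\underset{\mathfrak{Z}_0}{\asymp}\|\phi_E(P)\|_{l_q^{d_E}},\quad P\in {\cal P}(E),
$$
with constants independent of the cell.

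I would then set $A_{t,m}f=(\phi_{E_1}(f|_{E_1}),\dots,\phi_{E_N}(f|_{E_N}))\in \R^{\nu_{t,m}}$, a linear isomorphism by the preceding step. To verify (\ref{atm1}), write $f=A_{t,m}^{-1}(c)$ with $c=(c^{(1)},\dots,c^{(N)})$; as $f$ vanishes off $\sqcup_j E_j$, property 3 yields $\|f\|_{Y_q(\Omega)}^q=\sum_j\|f|_{E_j}\|_{Y_q(E_j)}^q$, and the cellwise equivalence bounds the right-hand side by a constant times $\sum_j\|c^{(j)}\|_{l_q^{d_j}}^q=\|c\|_{l_q^{\nu_{t,m}}}^q$. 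For (\ref{atm}) I would use that on each $\R^{d_j}$ the norms $l_{p_i}^{d_j}$ and $l_q^{d_j}$ are equivalent with constant depending only on $r_0$, so $\|\phi_{E_j}(f|_{E_j})\|_{l_{p_i}^{d_j}}\underset{\mathfrak{Z}_0}{\lesssim}\|f|_{E_j}\|_{Y_q(E_j)}$ for $i=0,1$; raising to the power $p_i$ and summing over $j$ gives exactly $\|f\|_{p_i,q,\hat T_{t,m}}^{p_i}$. The operator $A'_{t,m}$ is built in the same way from $T_{t,m}$, which gives (\ref{atm2}).

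The one genuine obstacle is getting the cellwise distortion uniformly in $E$: a priori the normed spaces $({\cal P}(E),\|\cdot\|_{Y_q(E)})$ might degenerate as cells shrink. This is settled solely by the bound $d_E\le r_0$, since the Banach--Mazur distance of any normed space of dimension at most $r_0$ to $l_q^{d_E}$ is controlled by $r_0$ and $q$ alone --- no regularity of the cells or of $Y_q$ enters. Everything else is the bookkeeping of recombining cellwise $l_q$ and $l_{p_i}$ sums, which matches the definitions of $\|\cdot\|_{Y_q(\Omega)}$ and $\|\cdot\|_{p_i,q,T}$ directly.
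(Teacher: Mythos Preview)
Your proposal is correct and coincides with the paper's argument: both build $A_{t,m}$ cell by cell via John's ellipsoid theorem, exploiting only the uniform bound $\dim {\cal P}(E)\le r_0$ to get distortion independent of $E$, then concatenate over $\hat T_{t,m}$. The paper maps each $({\cal P}(E),\|\cdot\|_{Y_q(E)})$ directly to $l_2^{\nu_E}$ rather than to $l_q^{d_E}$, but this is an immaterial cosmetic difference since you pass through $l_2$ anyway.
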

\begin{proof}
Let $E \in \hat T_{t,m}$, ${\rm mes}(E)>0$, $\nu_E =\dim {\cal
P}(E)$. Then
\begin{align}
\label{nu_e} \nu_E\le r_0.
\end{align}
By John's ellipsoid theorem, there is an isomorphism $A_E: {\cal
P}(E)\rightarrow l_2^{\nu_E}$ such that
\begin{align}
\label{a_e} \|A_Ef\|_{l_2^{\nu_E}}\le \|f\|_{Y_q(E)}\le
\sqrt{\nu_E} \|A_Ef\|_{l_2^{\nu_E}}, \quad f\in {\cal P}(E).
\end{align}
We set for $f\in {\cal S}_{\hat T_{t,m}}(\Omega)$
$$
A_{t,m}f =(A_E(f|_E))_{E\in \hat T_{t,m}}.
$$
Then from (\ref{nu_e}) and (\ref{a_e}) we get (\ref{atm}),
(\ref{atm1}). The isomorphism $A'_{t,m}$ satisfying (\ref{atm2})
can be constructed similarly.
\end{proof}

Let $W_{t,m}$ be the set of sequences $(c_j)_{j=1} ^{\nu_{t,m}}\in
\R^{\nu_{t,m}}$ such that
\begin{align}
\label{w_tm} \begin{array}{c} \left(\sum \limits
_{j=1}^{\nu_{t,m}}|c_j|^{p_1} \right)^{1/p_1} \le
2^{\mu_*k_*t}\cdot 2^{-m(s_*+1/q-1/p_1)}, \\ \left(\sum \limits
_{j=1}^{\nu_{t,m}}|c_j|^{p_0} \right)^{1/p_0} \le
2^{-\alpha_*k_*t}\cdot 2^{m(1/p_0-1/q)}.\end{array}
\end{align}

\begin{Sta}
Let $l\in \Z_+$. Then
\begin{align}
\label{dl} d_l((P_{t,m+1}-P_{t,m})(BX_{p_1}(\Omega)\cap
BX_{p_0}(\Omega)), \, Y_q(\Omega))
\underset{\mathfrak{Z}_0}{\lesssim} d_l(W_{t,m}, \,
l_q^{\nu_{t,m}}),
\end{align}
\begin{align}
\label{dl1} d_l(P_{t,m}(BX_{p_1}(\Omega)\cap BX_{p_0}(\Omega)), \,
Y_q(\Omega)) \underset{\mathfrak{Z}_0}{\lesssim}
2^{-\alpha_*k_*t}\cdot 2^{-m(1/q-1/p_0)}d_l(B_{p_0}^{\nu'_{t,m}},
\, l_q^{\nu'_{t,m}}).
\end{align}
\end{Sta}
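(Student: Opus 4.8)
The plan is to reduce both estimates to finite-dimensional Kolmogorov widths by means of the isomorphisms $A_{t,m}$ and $A'_{t,m}$ constructed in the preceding proposition. Two facts already recorded do the essential work. First, ${\rm Im}\,(P_{t,m+1}-P_{t,m})\subset {\cal S}_{\hat T_{t,m}}(\Omega)$ and ${\rm Im}\, P_{t,m}\subset {\cal S}_{T_{t,m}}(\Omega)$, so the sets whose widths must be bounded already live inside the finite-dimensional spline spaces that $A_{t,m}$ and $A'_{t,m}$ trivialize. Second, the norm bounds (\ref{ptm1f_m_ptmf}), (\ref{ptm1_m_ptm_p0}), (\ref{ptm_p0}) locate the images of $BX_{p_1}(\Omega)\cap BX_{p_0}(\Omega)$ inside concrete finite-dimensional convex bodies once these images are pushed through $A_{t,m}$ or $A'_{t,m}$.

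The technical core is a transfer principle for widths: if $S\subset Y_q(\Omega)$ is a finite-dimensional subspace, $A:S\to \R^\nu$ a linear isomorphism with $\|A^{-1}c\|_{Y_q(\Omega)}\underset{\mathfrak{Z}_0}{\lesssim}\|c\|_{l_q^\nu}$, and $K\subset S$ is bounded, then $d_l(K,\,Y_q(\Omega))\underset{\mathfrak{Z}_0}{\lesssim} d_l(A(K),\,l_q^\nu)$. I would prove this by fixing $\varepsilon>0$, choosing an $l$-dimensional subspace $L_0\subset \R^\nu$ that is $(1+\varepsilon)$-optimal for $A(K)$ in $l_q^\nu$, and setting $L=A^{-1}(L_0)$, of dimension at most $l$. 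For $f\in K$ one picks $y\in L_0$ with $\|Af-y\|_{l_q^\nu}\le (1+\varepsilon)d_l(A(K),\,l_q^\nu)$; then by linearity $\|f-A^{-1}y\|_{Y_q(\Omega)}=\|A^{-1}(Af-y)\|_{Y_q(\Omega)}\underset{\mathfrak{Z}_0}{\lesssim}\|Af-y\|_{l_q^\nu}$, and letting $\varepsilon\to 0$ gives the claim. Only the one-sided bound on $A^{-1}$ toward $l_q^\nu$ is used; since we seek an upper estimate we never need a reverse inequality, which is precisely why the controls (\ref{atm1}) and the second half of (\ref{atm2}) are enough.

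For (\ref{dl}) I would take $S={\cal S}_{\hat T_{t,m}}(\Omega)$, $A=A_{t,m}$. For $f\in BX_{p_1}(\Omega)\cap BX_{p_0}(\Omega)$, combining the first two bounds of (\ref{atm}) with (\ref{ptm1f_m_ptmf}) and (\ref{ptm1_m_ptm_p0}) shows that the vector $A_{t,m}(P_{t,m+1}-P_{t,m})f$ has $l_{p_1}^{\nu_{t,m}}$-norm $\underset{\mathfrak{Z}_0}{\lesssim}2^{\mu_*k_*t}2^{-m(s_*+1/q-1/p_1)}$ and, simultaneously, $l_{p_0}^{\nu_{t,m}}$-norm $\underset{\mathfrak{Z}_0}{\lesssim}2^{-\alpha_*k_*t}2^{m(1/p_0-1/q)}$. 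Comparing with (\ref{w_tm}), the image of $(P_{t,m+1}-P_{t,m})(BX_{p_1}(\Omega)\cap BX_{p_0}(\Omega))$ under $A_{t,m}$ lies in $c'W_{t,m}$ for some $c'=c'(\mathfrak{Z}_0)$. The transfer principle (using (\ref{atm1})), together with the homogeneity $d_l(c'W_{t,m},\,l_q^{\nu_{t,m}})=c'\,d_l(W_{t,m},\,l_q^{\nu_{t,m}})$ and monotonicity of widths under inclusion, then yields (\ref{dl}).

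For (\ref{dl1}) I would take $S={\cal S}_{T_{t,m}}(\Omega)$, $A=A'_{t,m}$. Here the first bound of (\ref{atm2}) combined with (\ref{ptm_p0}) gives $\|A'_{t,m}P_{t,m}f\|_{l_{p_0}^{\nu'_{t,m}}}\underset{\mathfrak{Z}_0}{\lesssim}2^{-\alpha_*k_*t}2^{m(1/p_0-1/q)}$, so $A'_{t,m}(P_{t,m}(BX_{p_1}(\Omega)\cap BX_{p_0}(\Omega)))\subset c''\,2^{-\alpha_*k_*t}2^{m(1/p_0-1/q)}\,B_{p_0}^{\nu'_{t,m}}$. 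Applying the transfer principle with the second bound of (\ref{atm2}), extracting the scalar factor by homogeneity, and rewriting $2^{m(1/p_0-1/q)}=2^{-m(1/q-1/p_0)}$ gives (\ref{dl1}). The only genuinely delicate point in the whole argument is the transfer principle; once it is in place, the remainder is bookkeeping of constants and the routine homogeneity and monotonicity of $d_l$.
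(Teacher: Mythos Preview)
Your proof is correct and follows essentially the same route as the paper. The paper also shows $A_{t,m}(P_{t,m+1}-P_{t,m})(BX_{p_1}(\Omega)\cap BX_{p_0}(\Omega))\subset \hat c(\mathfrak{Z}_0)W_{t,m}$ from (\ref{ptm1f_m_ptmf}), (\ref{ptm1_m_ptm_p0}), (\ref{atm}) and then transfers the width bound back via (\ref{atm1}); the only cosmetic difference is that the paper phrases your ``transfer principle'' inline by taking an extremal subspace $L\subset l_q^{\nu_{t,m}}$ and the metric projection onto it, whereas you state it as a separate lemma with an $\varepsilon$-optimal subspace.
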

\begin{proof}
Let $L\subset l_q^{\nu_{t,m}}$ be an extremal subspace for the
widths $d_l(W_{t,m}, \, l_q^{\nu_{t,m}})$, let
$E_{t,m}:l_q^{\nu_{t,m}} \rightarrow L$ be the metric projection,
and let  $I_{t,m}: l_q^{\nu_{t,m}} \rightarrow l_q^{\nu_{t,m}}$ be
the identity operator. From (\ref{ptm1f_m_ptmf}),
(\ref{ptm1_m_ptm_p0}) and (\ref{atm}) it follows that
$$A_{t,m}(P_{t,m+1}-P_{t,m})(BX_{p_1}(\Omega)\cap BX_{p_0}(\Omega))
\subset \hat c(\mathfrak{Z}_0) W_{t,m}$$ for some positive
constant $\hat c(\mathfrak{Z}_0)$. Hence,
$$
d_l((P_{t,m+1}-P_{t,m})(BX_{p_1}(\Omega)\cap BX_{p_0}(\Omega)), \,
Y_q(\Omega)) \underset{\mathfrak{Z}_0}{\lesssim}$$$$ \lesssim
d_l(A_{t,m}^{-1}A_{t,m}(P_{t,m+1}-P_{t,m})(BX_{p_1}(\Omega)\cap
BX_{p_0}(\Omega)), \, Y_q(\Omega))
\stackrel{(\ref{atm1})}{\underset{\mathfrak{Z}_0}{\lesssim}}
$$
$$
\lesssim d_l(A_{t,m}(P_{t,m+1}-P_{t,m})(BX_{p_1}(\Omega)\cap
BX_{p_0}(\Omega)), \, l_q^{\nu_{t,m}}) \le$$$$\le
\|(I_{t,m}-E_{t,m})A_{t,m}(P_{t,m+1}-P_{t,m})(BX_{p_1}(\Omega)\cap
BX_{p_0}(\Omega))\|_{l_q^{\nu_{t,m}}}\underset{\mathfrak{Z}_0}{\lesssim}
$$
$$
\lesssim\|(I_{t,m}-E_{t,m})W_{t,m}\|_{l_{q}^{\nu_{t,m}}} =
d_l(W_{t,m}, \, l_q^{\nu_{t,m}}).
$$

The inequality (\ref{dl1}) can be proved similarly applying
(\ref{ptm_p0}) and (\ref{atm2}).
\end{proof}

Let $k_0>0$, $k_1>0$, $\nu\in \N$, $1< \tilde q < \infty$,
$\frac{1}{\tilde q} =\frac{1-\lambda}{p_1}+\frac{\lambda}{p_0}$,
$\lambda\in (0, \, 1)$. Then the H\"{o}lder's inequality yields
the inclusion
$$
k_0B_{p_0}^\nu\cap k_1B_{p_1}^\nu \subset
k_0^{\lambda}k_1^{1-\lambda} B_{\tilde q}^\nu
$$
(it is also the particular case of Gallev's result \cite[Theorem
2]{galeev1}).

If $\frac 1q = \frac{1-\lambda}{p_1} + \frac{\lambda}{p_0}$ with
$\lambda \in (0, \, 1)$, then by (\ref{w_tm}) and the equality
\begin{align}
\label{1lams}
(1-\lambda)(s_*+1/q-1/p_1)+\lambda(1/q-1/p_0)=(1-\lambda)s_*
\end{align}
we obtain
\begin{align}
\label{wkm_lq} W_{k,m} \subset
2^{k_*t\left((1-\lambda)\mu_*-\lambda \alpha_*\right)} \cdot
2^{-ms_*(1-\lambda)}B_q^{\nu_{t,m}},
\end{align}
$$
d_l((P_{t,m+1}-P_{t,m})(BX_{p_1}(\Omega)\cap BX_{p_0}(\Omega)), \,
Y_q(\Omega))
\stackrel{(\ref{dl})}{\underset{\mathfrak{Z}_0}{\lesssim}}
2^{k_*t\left((1-\lambda)\mu_*-\lambda \alpha_*\right)} \cdot
2^{-ms_*(1-\lambda)};
$$
in particular, for $l=0$ we get
\begin{align}
\label{wkm_lqd0} \|P_{t,m+1}f-P_{t,m}f\|
_{Y_q(\Omega_t)}\underset{\mathfrak{Z}_0}{\lesssim}
2^{k_*t\left((1-\lambda)\mu_*-\lambda \alpha_*\right)} \cdot
2^{-ms_*(1-\lambda)}, \quad f\in BX_{p_1}(\Omega)\cap
BX_{p_0}(\Omega).
\end{align}

If $\frac 12 = \frac{1-\tilde\lambda}{p_1} +
\frac{\tilde\lambda}{p_0}$ with $\tilde\lambda \in (0, \, 1)$,
then
\begin{align}
\label{wkm_l2} W_{k,m} \subset 2^{k_*t\left( (1-\tilde\lambda)
\mu_*-\tilde\lambda \alpha_*\right)} \cdot
2^{-m((1-\tilde\lambda)(s_*+1/q-1/p_1)+\tilde \lambda
(1/q-1/p_0))}B_2^{\nu_{t,m}},
\end{align}
$$
\begin{array}{c} d_l((P_{t,m+1}-P_{t,m})(BX_{p_1}(\Omega)\cap
BX_{p_0}(\Omega)), \, Y_q(\Omega))
\underset{\mathfrak{Z}_0}{\lesssim} \\ \lesssim 2^{k_*t\left(
(1-\tilde\lambda) \mu_*-\tilde\lambda \alpha_*\right)} \cdot
2^{-m((1-\tilde\lambda)(s_*+1/q-1/p_1)+\tilde \lambda
(1/q-1/p_0))} d_l(B_2^{\nu_{t,m}}, \, l_q^{\nu_{t,m}}).
\end{array}
$$

We denote $\tilde \Omega_t = \cup _{l\ge t} \Omega_l$.

\begin{Sta}
\label{emb1} Let $p_0\ge q$, $\frac{\alpha_*}{\gamma_*}>
\frac{1}{q} -\frac{1}{p_0}$. The for each $t\ge t_0$ and for each
function $f\in BX_{p_0}(\Omega)\cap BX_{p_1}(\Omega)$
\begin{align}
\label{p0geq_emb} \|f\|_{Y_q(\tilde\Omega_t)}
\underset{\mathfrak{Z}_0}{\lesssim}
2^{-(\alpha_*-\gamma_*/q+\gamma_*/p_0)k_*t} .
\end{align}
\end{Sta}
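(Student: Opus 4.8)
The plan is to reduce everything to the single-cell estimate of Assumption \ref{supp4} taken at the coarsest scale $m=0$, and then to sum the resulting contributions over the cells using the $l_q$-additivity of $\|\cdot\|_{Y_q}$ (property 3) together with the cardinality bound of Assumption \ref{supp2}. First I would write $\tilde\Omega_t$ as the disjoint union $\sqcup_{l\ge t}\sqcup_{j\in\hat J_l}\Omega_{l,j}$ and invoke property 3 to get
$$\|f\|_{Y_q(\tilde\Omega_t)}^q = \sum_{l\ge t}\sum_{j\in\hat J_l}\|f\|_{Y_q(\Omega_{l,j})}^q.$$
Since $T_{l,j,0}=\{\Omega_{l,j}\}$ by (\ref{ttj0}) and $p_0\ge q$, Assumption \ref{supp4} applied with $m=0$ gives $\|f\|_{Y_q(\Omega_{l,j})}\le c\,2^{-\alpha_*k_*l}\|f\|_{X_{p_0}(\Omega_{l,j})}$ for every cell, so the right-hand side is at most $c^q\sum_{l\ge t}2^{-\alpha_*k_*lq}\sum_{j\in\hat J_l}\|f\|_{X_{p_0}(\Omega_{l,j})}^q$.

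Next I would handle the inner sum over $j$. Because $p_0\ge q$, the power-mean (Jensen) inequality applied to the convex map $x\mapsto x^{p_0/q}$ yields $\sum_{j\in\hat J_l}\|f\|_{X_{p_0}(\Omega_{l,j})}^q \le (\mathrm{card}\,\hat J_l)^{1-q/p_0}\big(\sum_{j\in\hat J_l}\|f\|_{X_{p_0}(\Omega_{l,j})}^{p_0}\big)^{q/p_0}$. Property 3 identifies the bracket with $\|f\|_{X_{p_0}(\Omega_l)}^{p_0}$, and since $\Omega_l\subset\Omega$ and $f\in BX_{p_0}(\Omega)$ we have $\|f\|_{X_{p_0}(\Omega_l)}\le 1$; inserting the bound $\mathrm{card}\,\hat J_l\le c\,2^{\gamma_*k_*l}$ from (\ref{card_jt}) leaves $\sum_{j\in\hat J_l}\|f\|_{X_{p_0}(\Omega_{l,j})}^q\underset{\mathfrak{Z}_0}{\lesssim} 2^{\gamma_*k_*l(1-q/p_0)}$.

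Combining the two estimates collapses the problem to a single geometric series in $l$:
$$\|f\|_{Y_q(\tilde\Omega_t)}^q\underset{\mathfrak{Z}_0}{\lesssim}\sum_{l\ge t}2^{-k_*l\left(q\alpha_*-\gamma_*(1-q/p_0)\right)}.$$
The exponent $q\alpha_*-\gamma_*(1-q/p_0)$ is positive precisely because of the hypothesis $\alpha_*/\gamma_*>1/q-1/p_0$, so the series converges and is $\underset{\mathfrak{Z}_0}{\asymp}$ its first term $2^{-k_*t(q\alpha_*-\gamma_*(1-q/p_0))}$.

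Taking $q$-th roots and simplifying $(q\alpha_*-\gamma_*(1-q/p_0))/q=\alpha_*-\gamma_*/q+\gamma_*/p_0$ then gives exactly (\ref{p0geq_emb}). I expect no serious obstacle in this argument: the only two points that require genuine care are the direction of the power-mean inequality, which is where the assumption $p_0\ge q$ is used in an essential way, and the verification that the stated hypothesis on $\alpha_*/\gamma_*$ is exactly the convergence condition for the geometric series; the remainder is bookkeeping of the exponents.
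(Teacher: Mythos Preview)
Your proposal is correct and follows essentially the same route as the paper: decompose $\tilde\Omega_t$ into the cells $\Omega_{l,j}$, apply Assumption~\ref{supp4} at $m=0$, use H\"older (your power-mean step) together with the cardinality bound (\ref{card_jt}) to control the inner sum, and then sum the resulting geometric series in $l$. The only cosmetic difference is that the paper keeps the factor $\|f\|_{X_{p_0}(\Omega_l)}^q$ through to the last display before bounding it by $1$, whereas you drop it one line earlier.
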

\begin{proof}
By H\"{o}lder's inequality,
$$
\|f\|_{Y_q(\tilde\Omega_t)}^q = \sum \limits _{l\ge t} \sum
\limits _{j\in \hat J_l} \|f\|^q_{Y_q(\Omega_{l,j})}
\stackrel{(\ref{ttj0}),(\ref{f_yqe})}{\underset{\mathfrak{Z}_0}{\lesssim}}
\sum \limits _{l\ge t} \sum \limits _{j\in \hat J_l}
2^{-q\alpha_*l} \|f\|^q _{X_{p_0}(\Omega_{l,j})}
\stackrel{(\ref{card_jt})}{\underset{\mathfrak{Z}_0}{\lesssim}}
$$
$$
\lesssim \sum \limits _{l\ge t}
2^{\gamma_*\left(1-\frac{q}{p_0}\right)k_*l}\cdot 2^{-q\alpha_*l}
\|f\|^q _{X_{p_0}(\Omega_{l})} \underset{\mathfrak{Z}_0}{\lesssim}
2^{-q(\alpha_*-\gamma_*/q+\gamma_*/p_0)k_*t}.
$$
\end{proof}
\begin{Sta}
\label{emb2} Let $p_0\le q$ and $p_1\le q$, or $p_0<q$, $p_1>q$.
Suppose that $\alpha_*+\mu_*>0$, $\alpha_*\left(s_*+\frac
1q-\frac{1}{p_1}\right)>\mu_*\left(\frac{1}{p_0}-\frac 1q\right)$.
Then for each $t\ge t_0$ and for each function $f\in
BX_{p_0}(\Omega)\cap BX_{p_1}(\Omega)$
\begin{align}
\label{p0lq_emb} \|f\|_{Y_q(\tilde\Omega_t)}
\underset{\mathfrak{Z}_0}{\lesssim}
2^{-\frac{\alpha_*(s_*+1/q-1/p_1)-\mu_*(1/p_0-1/q)}
{s_*+1/p_0-1/p_1}k_*t}.
\end{align}
\end{Sta}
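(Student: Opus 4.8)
The plan is to follow the template of Proposition \ref{emb1}, but since we can no longer use the pure $X_{p_0}$-estimate \eqref{f_yqe} (it is only assumed for $p_0\ge q$), the idea is to decompose each $f$ on $\tilde\Omega_t$ by means of the telescoping operators $P_{t,m}$ and sum the resulting contributions across both the levels $l\ge t$ and the partition parameter $m$. Concretely, for a fixed level $l$ I would write, on $\Omega_l$, the identity $f=\lim_{m\to\infty}P_{l,m}f$ as the telescoping series $P_{l,0}f+\sum_{m\ge 0}(P_{l,m+1}f-P_{l,m}f)$, noting that $P_{l,0}f=P_{\Omega_l}f$ is controlled by \eqref{ptm_p011111} in the case $p_0\le q$ and by the projection bound \eqref{pef} in general. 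The key point is that the two defining inequalities \eqref{w_tm} for $W_{l,m}$ interpolate, via the inclusions preceding \eqref{wkm_lq} and \eqref{wkm_l2}, into a single bound in $l_q$; in particular \eqref{wkm_lqd0} gives
$$
\|P_{l,m+1}f-P_{l,m}f\|_{Y_q(\Omega_l)}
\underset{\mathfrak{Z}_0}{\lesssim}
2^{k_*l((1-\lambda)\mu_*-\lambda\alpha_*)}\cdot 2^{-ms_*(1-\lambda)},
$$
where $\lambda\in(0,1)$ is the Hölder exponent solving $1/q=(1-\lambda)/p_1+\lambda/p_0$. This is exactly the regime where $p_0<q$ forces the use of the $p_0$-condition only through interpolation with the $p_1$-condition.

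Next I would sum these $Y_q$-norms. Because $\tilde\Omega_t=\sqcup_{l\ge t}\Omega_l$ and property 3 gives $\|f\|_{Y_q(\tilde\Omega_t)}^q=\sum_{l\ge t}\|f\|_{Y_q(\Omega_l)}^q$, I would first control each $\|f\|_{Y_q(\Omega_l)}$ by the triangle inequality applied to the telescoping decomposition, summing the geometric series in $m$. The exponent $-s_*(1-\lambda)$ is negative since $s_*>0$, so $\sum_{m}2^{-ms_*(1-\lambda)}$ converges and the dominant term is $m=0$, leaving a bound of the form $2^{-k_*l\,\kappa}$ with $\kappa=\lambda\alpha_*-(1-\lambda)\mu_*$ (up to the $P_{l,0}$ term, which I expect to be of the same or better order). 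A direct computation, using $(1-\lambda)/p_1+\lambda/p_0=1/q$ together with identity \eqref{1lams}, should rewrite $\kappa$ as $\bigl(\alpha_*(s_*+1/q-1/p_1)-\mu_*(1/p_0-1/q)\bigr)/(s_*+1/p_0-1/p_1)$, which is exactly the exponent appearing in \eqref{p0lq_emb}; the hypothesis $\alpha_*(s_*+1/q-1/p_1)>\mu_*(1/p_0-1/q)$ guarantees $\kappa>0$. Finally, summing $2^{-q\kappa k_* l}$ over $l\ge t$ is again a convergent geometric series whose leading term is $l=t$, yielding \eqref{p0lq_emb}.

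The hypothesis $\alpha_*+\mu_*>0$ enters to guarantee that the interpolation is non-degenerate and that the $P_{l,0}$ and the $m$-sum contributions are genuinely dominated by the claimed rate rather than by some slower-decaying boundary term; I would check that it makes the relevant intermediate exponents have the right sign. The main obstacle I anticipate is the bookkeeping in the case split $p_0<q<p_1$ versus $p_0,p_1\le q$: when $p_1>q$ the admissible $\lambda$ still lies in $(0,1)$ since $1/q$ is strictly between $1/p_1$ and $1/p_0$, but the $P_{l,0}$ term must be handled by \eqref{pef} directly (not \eqref{ptm_p011111}, which needs $p_0\le q$), and I must verify that its contribution $2^{-\alpha_*k_*l}$ is subsumed by $2^{-\kappa k_*l}$. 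This amounts to checking $\kappa\le\alpha_*$, i.e. $-\mu_*(1/p_0-1/q)\le\alpha_*(1/p_1-1/q)$ after clearing the positive denominator — a sign-chase that the two standing hypotheses should exactly underwrite. Once that inequality is confirmed in each subcase, the two nested geometric summations close the argument and deliver the stated order bound.
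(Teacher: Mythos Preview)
There is a genuine gap: your identification of $\kappa=\lambda\alpha_*-(1-\lambda)\mu_*$ with the target exponent is incorrect. With $\lambda=\dfrac{1/q-1/p_1}{1/p_0-1/p_1}$ one has
\[
\lambda\alpha_*-(1-\lambda)\mu_*=\frac{(1/q-1/p_1)\alpha_*-(1/p_0-1/q)\mu_*}{1/p_0-1/p_1},
\]
whereas the exponent in \eqref{p0lq_emb} is $\dfrac{(s_*+1/q-1/p_1)\alpha_*-(1/p_0-1/q)\mu_*}{s_*+1/p_0-1/p_1}$. These differ by the presence of $s_*$, and identity \eqref{1lams} does not bridge the two. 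In fact the difference of the two exponents equals $\dfrac{s_*(1/p_0-1/q)(\alpha_*+\mu_*)}{(s_*+1/p_0-1/p_1)(1/p_0-1/p_1)}>0$, so your $\kappa$ is strictly smaller than the required one; worse, your $\kappa$ can be negative under the stated hypotheses (e.g.\ $\alpha_*=\mu_*=1$, $s_*=2$, $1/q-1/p_1=1$, $1/p_0-1/q=2$), so the sum over $l$ need not converge. The error is that the interpolated bound \eqref{wkm_lqd0}, evaluated at $m=0$, is much weaker than what is needed: you are throwing away the $p_0$-information for small $m$.

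The missing idea is to split the $m$-sum at the crossover level $m_l$ defined by $2^{-\alpha_*k_*l}\cdot 2^{m_l(1/p_0-1/q)}=2^{\mu_*k_*l}\cdot 2^{-m_l(s_*+1/q-1/p_1)}$, i.e.\ $2^{m_l(s_*+1/p_0-1/p_1)}=2^{(\mu_*+\alpha_*)k_*l}$ (this is where $\alpha_*+\mu_*>0$ enters: it makes $m_l>0$). For $0\le m\le m_l$ use the pure $p_0$-bound \eqref{ptm1_m_ptm_p0} (which, since $p_0\le q$, passes to $Y_q$); this is \emph{increasing} in $m$, so the sum is dominated by $m=m_l$. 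For $m>m_l$ use the $p_1$-bound \eqref{ptm1f_m_ptmf} when $p_1\le q$, or the interpolated bound \eqref{wkm_lqd0} when $p_1>q$; both are decreasing in $m$, so again the sum is dominated by $m=m_l$. At $m=m_l$ both bounds coincide and give exactly $2^{-\kappa k_*l}$ with the correct $\kappa$. Note also that when $p_0,p_1\le q$ no $\lambda\in(0,1)$ with $1/q=(1-\lambda)/p_1+\lambda/p_0$ exists at all, so \eqref{wkm_lqd0} is unavailable and the direct $p_1$-bound is the only option for large $m$; your proposal does not cover this case.
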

\begin{proof}
If $p_0=q$, (\ref{p0lq_emb}) follows from (\ref{p0geq_emb}).
Further we assume that $p_0<q$.

We define $m_l\in \R$ by
\begin{align}
\label{mt_def} 2^{\mu_*k_*l}\cdot 2^{-\left(s_*+\frac 1q
-\frac{1}{p_1}\right)m_l} = 2^{-\alpha_*k_*l}\cdot
2^{\left(\frac{1}{p_0}-\frac 1q\right)m_l}.
\end{align}
Then
\begin{align}
\label{mt_form} 2^{\left(s_*+\frac{1}{p_0}- \frac{1}{p_1}\right)
m_l} = 2^{(\mu_*+\alpha_*)k_*l}.
\end{align}
Notice that $s_*+\frac{1}{p_0} -\frac{1}{p_1} = \left(s_* + \frac
1q -\frac{1}{p_1}\right) +\left(\frac{1}{p_0} -\frac 1q\right)>0$.

We claim that
\begin{align}
\label{sum_emb} \sum \limits _{l\ge t} \|P_{l,0}f\|
_{Y_q(\Omega_l)} + \sum \limits _{l\ge t} \sum \limits _{m\ge 0}
\|P_{l,m+1}f - P_{l,m}f\| _{Y_q(\Omega_l)}
\underset{\mathfrak{Z}_0}{\lesssim}
2^{-\frac{\alpha_*(s_*+1/q-1/p_1)-\mu_*(1/p_0-1/q)}
{s_*+1/p_0-1/p_1}k_*t}.
\end{align}
This estimate yields that
$$
f|_{\Omega_t} = \sum \limits _{l\ge t} P_{l,0}f + \sum \limits
_{l\ge t} \sum \limits _{m\ge 0} (P_{l,m+1}f - P_{l,m}f)
$$
(the series converges in $Y_q(\Omega)$) and (\ref{p0lq_emb})
holds.

First we consider the case $p_1\le q$. Then for $f\in
BX_{p_0}(\Omega)\cap BX_{p_1}(\Omega)$
$$
\sum \limits _{l\ge t} \|P_{l,0}f\| _{Y_q(\Omega_l)} + \sum
\limits _{l\ge t} \sum \limits _{m\ge 0} \|P_{l,m+1}f - P_{l,m}f\|
_{Y_q(\Omega_l)} \stackrel{(\ref{ptm1f_m_ptmf}),
(\ref{ptm_p011111}), (\ref{ptm1_m_ptm_p0})}
{\underset{\mathfrak{Z}_0}{\lesssim}}
$$
$$
\lesssim \sum \limits _{l\ge t} 2^{-\alpha_*k_*l} + \sum \limits
_{l\ge t} \sum \limits _{0\le m\le m_l} 2^{-\alpha_*k_*l}\cdot
2^{m\left(\frac{1}{p_0} -\frac 1q\right)} + \sum \limits _{l\ge t}
\sum \limits _{m> m_l} 2^{\mu_*k_*l}\cdot 2^{-m\left(s_* +\frac 1q
-\frac{1}{p_1}\right)}\underset{\mathfrak{Z}_0}{\lesssim}
$$
$$
\lesssim \sum \limits _{l\ge t} 2^{-\alpha_*k_*l}\cdot
2^{m_l\left(\frac{1}{p_0} -\frac 1q\right)} + \sum \limits _{l\ge
t} 2^{\mu_*k_*l}\cdot 2^{-m_l\left(s_* +\frac 1q
-\frac{1}{p_1}\right)} \stackrel{(\ref{mt_def})}
{\underset{\mathfrak{Z}}{\lesssim}}
$$
$$
\lesssim\sum \limits _{l\ge t} 2^{-\alpha_*k_*l}\cdot
2^{m_l\left(\frac{1}{p_0} -\frac 1q\right)}
\stackrel{(\ref{mt_form})} {\underset{\mathfrak{Z}}{\lesssim}}
2^{-\frac{\alpha_*(s_*+1/q-1/p_1)-\mu_*(1/p_0-1/q)}
{s_*+1/p_0-1/p_1}k_*t}.
$$

Let $p_1>q>p_0$. Then there exists $\lambda\in (0, \, 1)$ such
that $\frac 1q =\frac{1-\lambda}{p_1} +\frac{\lambda}{p_0}$.
Hence,
$$
\sum \limits _{l\ge t} \|P_{l,0}f\| _{Y_q(\Omega_l)} + \sum
\limits _{l\ge t} \sum \limits _{m\ge 0} \|P_{l,m+1}f - P_{l,m}f\|
_{Y_q(\Omega_l)} \stackrel{(\ref{ptm_p011111}),
(\ref{ptm1_m_ptm_p0}),
(\ref{wkm_lqd0})}{\underset{\mathfrak{Z}_0}{\lesssim}}
$$
$$
\lesssim \sum \limits _{l\ge t} 2^{-\alpha_*k_*l} + \sum \limits
_{l\ge t} \sum \limits _{0\le m\le m_l} 2^{-\alpha_*k_*l}\cdot
2^{m\left(\frac{1}{p_0} -\frac 1q\right)} + \sum \limits _{l\ge t}
\sum \limits _{m> m_l} 2^{((1-\lambda)\mu_*-\lambda
\alpha_*)k_*l}\cdot 2^{-ms_*(1-\lambda)}
\underset{\mathfrak{Z}_0}{\lesssim}
$$
$$
\lesssim \sum \limits _{l\ge t} 2^{-\alpha_*k_*l}\cdot
2^{m_l\left(\frac{1}{p_0} -\frac 1q\right)} + \sum \limits _{l\ge
t} 2^{((1-\lambda)\mu_*-\lambda \alpha_*)k_*l}\cdot 2^{-m_l
s_*(1-\lambda)}
\stackrel{(\ref{1lams}),(\ref{mt_def})}{\underset{\mathfrak{Z}_0}{\lesssim}}
$$
$$
\lesssim \sum \limits _{l\ge t} 2^{-\alpha_*k_*l}\cdot
2^{m_l\left(\frac{1}{p_0} -\frac 1q\right)}
\stackrel{(\ref{mt_form})} {\underset{\mathfrak{Z}}{\lesssim}}
2^{-\frac{\alpha_*(s_*+1/q-1/p_1)-\mu_*(1/p_0-1/q)}
{s_*+1/p_0-1/p_1}k_*t}.
$$

This completes the proof of (\ref{sum_emb}).
\end{proof}

Now we obtain the upper estimates for $d_n(BX_{p_0}(\Omega) \cap
BX_{p_1}(\Omega), \, Y_q(\Omega))$.

We need the following corollary from Gluskin's theorem
\cite{bib_gluskin}.
\begin{trma}
\label{gl_teor} Let $1\le p<q<\infty$, $q>2$, $\lambda _{pq} =\min
\left\{1, \, \frac{\frac 1p-\frac 1q}{\frac 12-\frac 1q}\right\}$,
$n\le N/2$. Then
$$
d_n(B_p^N, \, l_q^N) \underset{p,q}{\asymp} \min \{1, \,
n^{-1/2}N^{1/q}\} ^{\lambda_{pq}}.
$$
If $1\le p\le q\le 2$, $n\le N/2$, then
$$
d_n(B_p^N, \, l_q^N) \underset{p,q}{\asymp} 1.
$$
The upper estimates for the Kolmogorov widths also hold for $N/2<
n\le N$.
\end{trma}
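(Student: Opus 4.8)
The plan is to deduce the statement from Gluskin's theorem~\cite{bib_gluskin}, which already supplies order-sharp two-sided estimates for $d_n(B_p^N, \, l_q^N)$ throughout the range $1\le p<q<\infty$; what remains is to collapse those estimates into the single formula stated here and to separate off the constant-order regime. Two elementary facts will be used repeatedly: $d_n(C, \, X)$ is non-increasing in $n$, and taking the trivial subspace $L=\{0\}$ gives $d_n(B_p^N, \, l_q^N)\le \sup_{x\in B_p^N}\|x\|_{l_q^N}=1$, so the upper bound never exceeds $1$. I would treat the two clauses of the statement separately and, within the first clause, I would give a self-contained argument.

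Consider first the clause $1\le p\le q\le 2$, where the claim is $d_n\asymp 1$ for $n\le N/2$. The upper bound $\le 1$ is the trivial observation above. For the lower bound I would exploit that $q\le 2$ makes $l_q$-approximation at least as hard as $l_2$-approximation: since $\|z\|_{l_q^N}\ge\|z\|_{l_2^N}$ for $q\le 2$, one has $d_n(B_p^N, \, l_q^N)\ge d_n(B_p^N, \, l_2^N)$. Then, as $e_i\in B_p^N$ for every $i$ (because $\|e_i\|_p=1$), for an arbitrary $n$-dimensional $L$ with orthogonal projection $P_L$ the trace identity $\sum_{i=1}^N\|P_Le_i\|_{l_2^N}^2=\dim L=n$ forces $\max_i\,(1-\|P_Le_i\|^2)\ge 1-\tfrac nN\ge\tfrac12$, whence $\max_i\dist_2(e_i, \, L)\ge\sqrt{1-n/N}$ for every $L$. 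Taking the infimum over $L$ gives $d_n(B_p^N, \, l_2^N)\ge\sqrt{1-n/N}\gtrsim 1$ for $n\le N/2$, completing this clause.

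Next I would handle the clause $q>2$. The crossover of $\min\{1, \, n^{-1/2}N^{1/q}\}$ occurs at $n=N^{2/q}$. For $n\le N^{2/q}$ one has $n^{-1/2}N^{1/q}\ge 1$, so the right-hand side is $1^{\lambda_{pq}}=1$; the upper bound $\le 1$ is trivial, while the matching lower bound $d_n\gtrsim 1$ now genuinely requires the existence of almost-Euclidean sections of $l_q^N$ of dimension $\asymp N^{2/q}$, which is part of Gluskin's theorem. For $N^{2/q}\le n\le N/2$ the right-hand side equals $(n^{-1/2}N^{1/q})^{\lambda_{pq}}$, and I would split according to $\lambda_{pq}$. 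If $p\le 2$ then $\tfrac{1/p-1/q}{1/2-1/q}\ge 1$, so $\lambda_{pq}=1$ and the order is $n^{-1/2}N^{1/q}$; this is the Kashin--Gluskin estimate (random subspaces for the upper bound, random matrices for the lower one). If $2<p<q$ then $\lambda_{pq}=\tfrac{1/p-1/q}{1/2-1/q}<1$: here the naive inclusion $B_p^N\subset N^{1/2-1/p}B_2^N$ only yields the lossy bound $N^{1/2-1/p}n^{-1/2}N^{1/q}$, so the sharp exponent $\lambda_{pq}$ comes from Gluskin's combined (interpolation-type) estimate, and it remains to verify algebraically that his expression coincides with $(n^{-1/2}N^{1/q})^{\lambda_{pq}}$.

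Finally, for $N/2<n\le N$ only the upper bounds are asserted, and these follow by monotonicity: $d_n\le d_{\lfloor N/2\rfloor}$, and since $\lfloor N/2\rfloor\asymp N\asymp n$ in this range, the right-hand side evaluated at $\lfloor N/2\rfloor$ is of the same order as at $n$, which gives the claimed upper estimate. The main obstacle throughout is not the bookkeeping above but the two sharp ingredients in the $q>2$ clause: the Euclidean-section lower bound governing the regime $n\le N^{2/q}$, and the upper bound with the precise exponent $\lambda_{pq}$ for $2<p<q$. Both are exactly the content of Gluskin's theorem, so the proof proper consists of invoking it and performing the reductions and case analysis described here.
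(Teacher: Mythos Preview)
The paper does not actually prove Theorem~\ref{gl_teor}: it is stated without proof as ``the following corollary from Gluskin's theorem~\cite{bib_gluskin}'' and then used as a black box. So there is no proof in the paper to compare your proposal against.

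Your outline is a correct reconstruction of why the statement follows from Gluskin's results. The clause $1\le p\le q\le 2$ is handled by a clean self-contained argument (the trace identity giving $d_n(B_p^N,\,l_2^N)\ge\sqrt{1-n/N}$ is standard and sound). In the clause $q>2$ you have correctly identified the two genuinely nontrivial inputs that force one to cite Gluskin: the lower bound $d_n\gtrsim 1$ in the regime $n\le N^{2/q}$ (which cannot come from a trace argument since now $\|\cdot\|_q\le\|\cdot\|_2$), and the sharp exponent $\lambda_{pq}=\frac{1/p-1/q}{1/2-1/q}$ for $2<p<q$ (which the naive inclusion $B_p^N\subset N^{1/2-1/p}B_2^N$ misses). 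The monotonicity step for $N/2<n\le N$ is fine. In short, your proposal supplies more detail than the paper does, and the plan is sound; just be aware that what you are writing is essentially an exposition of Gluskin's theorem in the specific form needed here, not a comparison to an existing argument in the paper.
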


For $1\le q\le p\le \infty$ the following equation holds
\cite{pietsch1}, \cite{stesin}:
\begin{align}
\label{pietsch_stesin} d_n(B_p^N, \, l_q^N) = (N-n)^{\frac
1q-\frac 1p}.
\end{align}

We denote
\begin{align}
\label{til_theta} \tilde \theta = \frac{s_*\left(\alpha_*
+\frac{\gamma_*}{p_0} -\frac{\gamma_*}{q}\right)}{\mu_* + \alpha_*
+\gamma_*\left(s_* +\frac{1}{p_0} -\frac{1}{p_1}\right)}, \quad
\hat \theta = \frac{\alpha_*\left(s_* +\frac 1q
-\frac{1}{p_1}\right) +\mu_*\left(\frac 1q
-\frac{1}{p_0}\right)}{\mu_* + \alpha_* +\gamma_*\left(s_*
+\frac{1}{p_0} -\frac{1}{p_1}\right)}.
\end{align}
Let the numbers $j_0\in \N$ and $\theta_j\in \R$ ($1\le j\le j_0$)
be as in Definition \ref{theta_j}.

\begin{Trm}
Let
\begin{align}
\label{s1qp} \min \left\{s_*, \, s_* +\frac 1q -\frac{1}{p_1},\;
s_*+\frac{1}{p_0} -\frac{1}{p_1}\right\}>0,
\end{align}
\begin{align}
\label{mua} \min\left\{\mu_* +\alpha_*+\frac{\gamma_*}{p_0}
-\frac{\gamma_*}{p_1}, \, \mu_*+\alpha_* \right\}>0.
\end{align}
Suppose that $\alpha_*> \frac{\gamma_*}{q} -\frac{\gamma_*}{p_0}$
for $p_0 \ge q$, $\alpha_*\left(s_*+\frac 1q-\frac{1}{p_1}\right)>
\mu_*\left(\frac{1}{p_0}-\frac 1q\right)$ for $p_0\le q$, $p_1\le
q$ or $p_0<q$, $p_1>q$. Suppose that there is $j_*\in \{1, \,
\dots, \, j_0\}$ such that $\theta_{j_*}< \min _{j\ne j_*}
\theta_j$. Then
$$
d_n(BX_{p_0}(\Omega) \cap BX_{p_1}(\Omega), \, Y_q(\Omega))
\underset{\mathfrak{Z}_0}{\lesssim} n^{-\theta_{j_*}}.
$$
\end{Trm}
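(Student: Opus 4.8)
The plan is to estimate the width $d_n(BX_{p_0}(\Omega)\cap BX_{p_1}(\Omega),\,Y_q(\Omega))$ by constructing a good finite-dimensional approximating subspace and controlling the error via a double dyadic decomposition over the ``scale'' index $t$ and the ``resolution'' index $m$. The starting observation is that any $f\in BX_{p_0}(\Omega)\cap BX_{p_1}(\Omega)$ admits the telescoping representation
$$
f|_{\Omega_t} = P_{t,0}f + \sum_{m\ge 0}(P_{t,m+1}f-P_{t,m}f),
$$
so on each scale $t$ the function is written as a base term plus a sum of dyadic increments. The total approximating subspace will be assembled from the ranges of the operators $P_{t,m}$ together with extremal subspaces for the finite-dimensional widths $d_l(W_{t,m},\,l_q^{\nu_{t,m}})$ and $d_l(B_{p_0}^{\nu'_{t,m}},\,l_q^{\nu'_{t,m}})$ appearing in Propositions about $d_l$. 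The tail $\tilde\Omega_{T}$ for large $T$ is handled by the embedding estimates (\ref{p0geq_emb}) and (\ref{p0lq_emb}), which show that the contribution of scales $t\ge T$ to the $Y_q$-norm decays geometrically; this lets us truncate the scale index at some $T\asymp \log n$ while paying only an acceptable error.

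First I would fix the overall budget $n$ of dimensions and distribute it across the pairs $(t,m)$, assigning $l_{t,m}$ dimensions to the increment at scale $t$, resolution $m$, and $l'_t$ dimensions to the base term at scale $t$. For each such pair the contribution to the approximation error is controlled, via (\ref{dl}) and (\ref{dl1}), by the finite-dimensional widths $d_{l_{t,m}}(W_{t,m},l_q^{\nu_{t,m}})$ and $2^{-\alpha_*k_*t}2^{-m(1/q-1/p_0)}d_{l'_t}(B_{p_0}^{\nu'_{t,m}},l_q^{\nu'_{t,m}})$, where $\nu_{t,m}\lesssim 2^{\gamma_*k_*t}2^m$. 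The sizes of these widths are then read off from the Gluskin-type estimates in Theorem \ref{gl_teor} and from (\ref{pietsch_stesin}), depending on the ordering of $p_0$, $p_1$, $q$ relative to $2$; for the mixed-norm set $W_{t,m}$ one uses the inclusions into $B_q^{\nu_{t,m}}$ and $B_2^{\nu_{t,m}}$ recorded in (\ref{wkm_lq}) and (\ref{wkm_l2}). The error on each pair is thus a known function of $t$, $m$, and the allotted dimension, and the upper bound for $d_n$ is obtained as a sum over $(t,m)$ of these individual errors plus the tail.

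The heart of the argument, and the main obstacle, is the optimal allocation of the dimension budget and the resulting summation. The exponents $\theta_1,\dots,\theta_{j_0}$ in Definition \ref{theta_j} are precisely the distinct decay rates that arise in the different regimes of the finite-dimensional width behaviour (the base term, the ``small-$m$'' and ``large-$m$'' parts of the increment sum, and the phase transitions at the critical resolution $m_t$ defined implicitly by $2^{\mu_*k_*t}2^{-m(s_*+1/q-1/p_1)}\asymp 2^{-\alpha_*k_*t}2^{m(1/p_0-1/q)}$). Each $\theta_j$ corresponds to one ``source'' of error, and the overall width decays like $n^{-\theta_{j_*}}$ where $\theta_{j_*}$ is the smallest exponent, i.e. the dominant (slowest-decaying) contribution. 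The hypothesis that there is a strict minimizer $j_*$ with $\theta_{j_*}<\min_{j\ne j_*}\theta_j$ ensures that no logarithmic factors appear from the borderline balancing of two equal rates, so the sums over $t$ and $m$ converge to a clean power of $n$. The bookkeeping is the delicate part: one must verify case by case (matching the nine regimes of Definition \ref{theta_j}) that the greedy/optimal allocation indeed produces the rate $n^{-\theta_{j_*}}$, using the positivity conditions (\ref{s1qp}), (\ref{mua}) and the embedding hypotheses on $\alpha_*,\mu_*,\gamma_*$ to guarantee that all the relevant geometric series in $t$ and $m$ are summable rather than divergent.

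I expect the proof to proceed by first treating the tail via Propositions \ref{emb1}--\ref{emb2} to reduce to finitely many scales $t_0\le t\le T$, then bounding the finite double sum by choosing $l_{t,m}$ so as to equalize (up to constants) the marginal errors, and finally identifying the dominant term. The technical care needed to handle all the parameter regimes of Theorem \ref{gl_teor} simultaneously, and to confirm that the dominant exponent equals exactly the $\theta_{j_*}$ prescribed by Definition \ref{theta_j}, is where the real work lies; the rest is an organized application of the estimates (\ref{dl})--(\ref{ptm1_m_ptm_p0}) already established above.
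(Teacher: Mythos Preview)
Your plan is correct and follows essentially the same route as the paper's proof: telescoping via $P_{t,m}$, tail control by Propositions \ref{emb1}--\ref{emb2}, reduction to finite-dimensional widths through (\ref{dl})--(\ref{dl1}), the inclusions (\ref{wkm_lq})--(\ref{wkm_l2}), Gluskin's theorem, and a case-by-case analysis matching the regimes of Definition \ref{theta_j}. The only point worth noting is that the paper does not literally solve an optimal allocation problem; instead it fixes an explicit geometric allocation $l_{t,m}=\lceil n\cdot 2^{-\varepsilon(|t-t_1(n)|+|m-m_1(n)|)}\rceil$ (and $2^{m_t^*}=2^{\hat m_t-\varepsilon|t-t_*(n)|}$) with a small $\varepsilon>0$ and an appropriately chosen ``center'' $(t_1(n),m_1(n))$, then invokes a summation lemma (Lemma 6 of \cite{vas_bes}) together with the strict-minimizer hypothesis to show the double sum is dominated by its value at the center, which yields exactly the rate $n^{-\theta_{j_*}}$.
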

\begin{proof}
We define the numbers $\hat m_t$, $\overline{m}_t$, $\tilde m_t$,
$m_t\in \R$ by equations
\begin{align}
\label{hat_mt} 2^{\gamma_*k_*t}\cdot 2^{\hat m_t}=n,
\end{align}
\begin{align}
\label{line_mt} 2^{\gamma_*k_*t}\cdot 2^{\overline{m}_t}=n^{q/2}
\quad(\text{for }q>2),
\end{align}
\begin{align}
\label{til_mt} 2^{-(\alpha_*+\gamma_*/p_0 - \gamma_*/q)k_*t} =
2^{(\mu_*+\gamma_*/q-\gamma_*/p_1)k_*t}\cdot 2^{-s_*\tilde m_t},
\end{align}
\begin{align}
\label{mt} 2^{-\alpha_*k_*t}\cdot 2^{m_t(1/p_0-1/q)} =
2^{\mu_*k_*t}\cdot 2^{-m_t(s_*+1/q-1/p_1)};
\end{align}
$\tilde t(n)$ and $t(n)$ are defined by equations
\begin{align}
\label{1111} \hat m_{\tilde t(n)}=\tilde m_{\tilde t(n)}, \quad
\hat m_{t(n)}=m_{t(n)}.
\end{align}

Then
\begin{align}
\label{til_mt_t} 2^{\tilde m_t s_*} = 2^{(\mu_* + \alpha_*
+\gamma_*/p_0 -\gamma_*/p_1)k_*t},
\end{align}
\begin{align}
\label{mt_t} 2^{m_t(s_*+1/p_0-1/p_1)} = 2^{(\mu_*+\alpha_*) k_*t},
\end{align}
\begin{align}
\label{tn} 2^{(\mu_*+\alpha_*+\gamma_*(s_*+1/p_0-1/p_1))k_*\tilde
t(n)} = n^{s_*},
\end{align}
\begin{align}
\label{tn1} 2^{(\mu_*+\alpha_*+\gamma_*(s_*+1/p_0-1/p_1))k_* t(n)}
= n^{s_*+1/p_0-1/p_1}.
\end{align}
If $q>2$, we also define the number $\hat t(n)$ by equation
\begin{align}
\label{2222} \overline{m}_{\hat t(n)} = m_{\hat t(n)}.
\end{align}
Then
\begin{align}
\label{tn_hat} 2^{(\mu_*+\alpha_*+\gamma_*(s_*+1/p_0-1/p_1))k_*
\hat t(n)} = n^{(s_*+1/p_0-1/p_1)q/2}.
\end{align}

Notice that from (\ref{s1qp}), (\ref{mua}), (\ref{1111}),
(\ref{til_mt_t}), (\ref{mt_t}), (\ref{tn}), (\ref{tn1}),
(\ref{2222}), (\ref{tn_hat}) it follows that
\begin{align}
\label{2t} 2^{\hat m_{t(n)}} = n^{\beta_1}, \quad 2^{\hat
m_{\tilde t(n)}}=n^{\beta_2}, \quad 2^{\overline{m}_{t(n)}} =
n^{\beta_3}, \quad \beta_i>0, \quad i=1, \, 2, \, 3.
\end{align}

Further $\varepsilon>0$ is a sufficiently small number; it will be
chosen later by $\mathfrak{Z}_0$.

First we consider $p_0\ge q$.

{\bf Case $p_0\ge q$, $p_1\ge q$.} We define the numbers ${m}^*_t$
by
\begin{align}
\label{2mt} 2^{{m}^*_t} = 2^{\hat m_t-\varepsilon|t-t_*(n)|};
\end{align}
here $t_*(n)=0$ or $t_*(n)=\tilde t(n)$ (we will choose $t_*(n)$
later by $\mathfrak{Z}_0$). By (\ref{hat_mt}), (\ref{tn}) and
(\ref{2t}), if $\varepsilon$ is sufficiently small, we have
$m_t^*>0$ for $0\le t\le \tilde t(n)$.

We set $Pf|_{\Omega_t} = P_{t,[{m}^*_t]}f$, $t_0\le t\le [\tilde
t(n)]$, $Pf|_{\tilde \Omega_{[\tilde t(n)]+1}} =0$. Then $${\rm
rk}\, P \stackrel{(\ref{rk_ptm}), (\ref{hat_mt}), (\ref{2mt})}
{\underset{\mathfrak{Z}_0, \varepsilon}{\lesssim}} n,$$
$$
\|f-Pf\| _{Y_q(\Omega)} \le \sum \limits _{t_0\le t\le [\tilde
t(n)]}\|f-P_{t,m}f\|_{Y_q(\Omega_t)} + \|f\|_{Y_q(\tilde
\Omega_{[\tilde t(n)]+1})} \stackrel{(\ref{f_m_ptm}),
(\ref{p0geq_emb}), (\ref{hat_mt}), (\ref{2mt})}
{\underset{\mathfrak{Z}_0}{\lesssim}}
$$
$$
\lesssim \sum \limits _{0\le t\le \tilde t(n)}
2^{(\mu_*+\gamma_*/q-\gamma_*/p_1)k_*t} \cdot
2^{s_*\gamma_*k_*t}\cdot n^{-s_*}\cdot
2^{s_*\varepsilon|t-t_*(n)|} +
2^{-(\alpha_*+\gamma_*/p_0-\gamma_*/q)k_*\tilde t(n)}
\stackrel{(\ref{tn})}{\underset{\mathfrak{Z}_0}{\lesssim}}
$$
$$
\lesssim \sum \limits _{0\le t\le \tilde t(n)}
2^{(\mu_*+\gamma_*(s_*+1/q-1/p_1))k_*t} \cdot n^{-s_*}\cdot
2^{s_*\varepsilon|t-t_*(n)|}+
2^{(\mu_*+\gamma_*(s_*+1/q-1/p_1))k_*\tilde t(n)} \cdot
n^{-s_*}=:S.
$$
If $\mu_*+\gamma_*(s_*+1/q-1/p_1)<0$, then we set $t_*(n)=0$ and
get $S \underset{\mathfrak{Z}_0}{\lesssim} n^{-s_*}$. If
$\mu_*+\gamma_*(s_*+1/q-1/p_1)>0$, then we set $t_*(n)=\tilde
t(n)$ and obtain
$$
S \underset{\mathfrak{Z}_0}{\lesssim}
2^{(\mu_*+\gamma_*(s_*+1/q-1/p_1))k_*\tilde t(n)} \cdot n^{-s_*}
\stackrel{(\ref{til_theta}),
(\ref{tn})}{\underset{\mathfrak{Z}_0}{\asymp}} n^{-\tilde \theta}.
$$
By conditions of theorem, $s_*\ne \tilde \theta$; therefore,
$\mu_*+\gamma_*(s_*+1/q-1/p_1) \ne 0$.

{\bf Case $p_0>q$, $p_1< q\le 2$.} We define $\lambda \in (0, \,
1)$ by equation
\begin{align}
\label{1q1l1p1} \frac 1q = \frac{1-\lambda}{p_1} +
\frac{\lambda}{p_0}.
\end{align}
Then (\ref{wkm_lqd0}) holds.

Since $p_1<q<p_0$, we have $t(n)<\tilde t(n)$ by (\ref{tn}),
(\ref{tn1}). Let $t_*(n)=0$ or $t_*(n)=t(n)$, $t_{**}(n)=t(n)$ or
$t_{**}(n) = \tilde t(n)$ (they will be chosen later by
$\mathfrak{Z}_0$). We define the numbers $m_t^*$ ($t_0\le t\le
t(n)$) and $m_t^{**}$ ($t(n)< t\le \tilde t(n)$) by
\begin{align}
\label{2_mt_st} 2^{m_t^*}=2^{\hat m_t-\varepsilon|t-t_*(n)|},
\quad 2^{m_t^{**}} = 2^{\hat m_t-\varepsilon|t-t_{**}(n)|}.
\end{align}
By (\ref{hat_mt}), (\ref{tn}) and (\ref{2t}), for small
$\varepsilon>0$ and $t_0\le t\le \tilde t(n)$ the numbers $m_t^*$
and $m_t^{**}$ are positive.

For $f\in BX_{p_0}(\Omega)\cap BX_{p_1}(\Omega)$ the following
equation holds:
$$
f = f\cdot \chi _{\tilde \Omega _{[\tilde t(n)]+1}}+\sum \limits
_{t_0\le t\le t(n)} P_{t,[m_t^*]}f + \sum \limits _{t(n)<t\le
[\tilde t(n)]} P_{t,[m_t^{**}]}f+
$$
$$
+ \sum \limits _{t_0\le t\le t(n)} (f - P_{t,[m_t^*]}f) + \sum
\limits _{t(n)<t\le [\tilde t(n)]} \sum \limits _{m\ge [m_t^{**}]}
(P_{t,m+1}f-P_{t,m}f).
$$
We have
$$
\sum \limits _{t_0\le t\le t(n)} {\rm rk}\, P_{t,[m_t^*]} + \sum
\limits _{t(n)<t\le \tilde t_n} {\rm rk}\, P_{t,[m_t^{**}]}
\stackrel{(\ref{rk_ptm}), (\ref{2_mt_st})}{\underset
{\mathfrak{Z}_0}{\lesssim}}
$$
$$
\lesssim \sum \limits _{t_0\le t\le t(n)} 2^{\gamma_*k_*t}\cdot
2^{\hat m_t}\cdot 2^{-\varepsilon|t-t_*(n)|} + \sum \limits
_{t(n)<t\le \tilde t(n)} 2^{\gamma_*k_*t}\cdot 2^{\hat m_t}\cdot
2^{-\varepsilon|t-t_{**}(n)|}
\stackrel{(\ref{hat_mt})}{\underset{\varepsilon,
\mathfrak{Z}_0}{\lesssim}} n,
$$
$$
\|f\|_{Y_q(\tilde \Omega _{[\tilde t(n)]+1})}
\stackrel{(\ref{p0geq_emb})}{\underset{\mathfrak{Z}_0}{\lesssim}}
2^{-(\alpha_*+\gamma_*/p_0-\gamma_*/q) k_*\tilde t(n)}
\stackrel{(\ref{til_theta}),
(\ref{tn})}{\underset{\mathfrak{Z}_0}{\lesssim}} n^{-\tilde
\theta}.
$$
This together with (\ref{f_m_ptm}), (\ref{wkm_lqd0}) yield that it
remains to estimate the sum
$$
\sum \limits _{t_0\le t\le t(n)} 2^{\mu_*k_*t}\cdot
2^{-m_t^*(s_*+1/q -1/p_1)} + \sum \limits _{t(n)<t \le \tilde
t(n)} \sum \limits _{m\ge m_t^{**}} 2^{((1-\lambda)\mu_*-\lambda
\alpha_*)k_*t}\cdot 2^{-ms_*(1-\lambda)}
\stackrel{(\ref{2_mt_st})}{\underset{\mathfrak{Z}_0}{\lesssim}}
$$
$$
\lesssim \sum \limits _{0\le t\le t(n)} 2^{\mu_*k_*t}\cdot
2^{-\hat m_t(s_*+1/q -1/p_1)+\varepsilon (s_*+1/q
-1/p_1)|t-t_*(n)|} +
$$
$$
+\sum \limits _{t(n)<t \le \tilde t(n)}
2^{((1-\lambda)\mu_*-\lambda \alpha_*)k_*t}\cdot 2^{-\hat
m_ts_*(1-\lambda)+\varepsilon (1-\lambda)s_* |t-t_{**}(n)|} =:S.
$$

By (\ref{1lams}), (\ref{1111}) and (\ref{mt_t}), we get
$$
2^{\mu_*k_*t(n)}\cdot 2^{-\hat m_{t(n)}(s_*+1/q -1/p_1)}
\underset{\mathfrak{Z}_0}{\asymp} 2^{((1-\lambda)\mu_*-\lambda
\alpha_*)k_*t(n)}\cdot 2^{-\hat m_{t(n)}s_*(1-\lambda)}.
$$

Recall that $\theta_{j_*}<\min _{j\ne j_*} \theta_j$ by conditions
of theorem. Taking into account (\ref{hat_mt}) and appropriately
choosing $t_*(n)$ and $t_{**}(n)$ we get $S
\underset{\mathfrak{Z}_0}{\lesssim} S_1(n)+ S_2(n)+S_3(n)$, where
$$
S_1(n)= 2^{-\hat m_0(s_*+1/q-1/p_1)} \stackrel{(\ref{hat_mt})}{=}
n^{-s_*-1/q+1/p_1},
$$
$$
S_2(n) = 2^{\mu_*k_*t(n)}\cdot 2^{-\hat m_{t(n)}(s_*+1/q -1/p_1)}
\stackrel{(\ref{til_theta}), (\ref{hat_mt}),
(\ref{tn1})}{\underset{\mathfrak{Z}_0}{\asymp}} n^{-\hat\theta},
$$
$$
S_3(n) = 2^{((1-\lambda)\mu_*-\lambda \alpha_*)k_*\tilde
t(n)}\cdot 2^{-\hat m_{\tilde t(n)}s_*(1-\lambda)} \stackrel{
(\ref{1111}), (\ref{til_mt_t})}{\underset{\mathfrak{Z}_0}{\asymp}}
$$
$$
\asymp
2^{-(\alpha_*+(1-\lambda)(\gamma_*/p_0-\gamma_*/p_1))k_*\tilde
t(n)} \stackrel{(\ref{til_theta}), (\ref{tn}),
(\ref{1q1l1p1})}{\underset{\mathfrak{Z}_0}{\asymp}} n^{-\tilde
\theta}.
$$

{\bf Case $p_0>q>2$, $q>p_1\ge 2$.} The numbers $t_1(n)\le \tilde
t(n)$, $m_1(n)\le n^{q/2}$ will be chosen later by
$\mathfrak{Z}_0$. For $0\le t\le \tilde t(n)$ we define the
numbers $m_t^*$ by
\begin{align}
\label{mt_st} 2^{m_t^*} = 2^{\hat m_t-\varepsilon |t-t_1(n)|}.
\end{align}
As in previous cases, we get $m_t^*>0$ for small $\varepsilon$.
For $0\le t\le \tilde t(n)$ we set
\begin{align}
\label{l_tm} l_{t,m} = \left\{ \begin{array}{l} \left \lceil
n\cdot 2^{-\varepsilon(|t-t_1(n)|+|m-m_1(n)|)}\right \rceil, \quad
m\le \overline{m}_t, \\ 0, \quad m>\overline{m}_t.
\end{array}\right.
\end{align}
Then for sufficiently small $\varepsilon>0$
\begin{align}
\label{pmt_rk} \sum \limits _{t=t_0}^{[\tilde t(n)]} {\rm rk}\,
P_{t,\lceil m_t^*\rceil} \stackrel{(\ref{rk_ptm}),(\ref{hat_mt}),
(\ref{mt_st})}{\underset{\mathfrak{Z}_0,\varepsilon}{\lesssim}} n,
\quad \sum \limits _{t=t_0}^{[\tilde t(n)]} \sum \limits _{m\ge
\lceil m_t^*\rceil } l_{t,m}
\underset{\mathfrak{Z}_0,\varepsilon}{\lesssim} n.
\end{align}

For $f\in BX_{p_0}(\Omega)\cap BX_{p_1}(\Omega)$ we have
\begin{align}
\label{f_f_cdot} f = f\cdot \chi_{\tilde \Omega _{[\tilde
t(n)]+1}} + \sum \limits _{t=t_0}^{[\tilde t(n)]} P_{t,\lceil
m_t^*\rceil }f + \sum \limits _{t=t_0}^{[\tilde t(n)]} \sum
\limits _{m\ge [m_t^*]} (P_{t,m+1} f - P_{t,m}f).
\end{align}

By (\ref{p0geq_emb}), (\ref{til_theta}) and (\ref{tn}), $\|f\cdot
\chi_{\tilde \Omega _{[\tilde t(n)]+1}}\|_{Y_q(\Omega)}
\underset{\mathfrak{Z}_0}{\lesssim} n^{-\tilde \theta}$. This
together with (\ref{dl}), (\ref{pmt_rk}) and (\ref{f_f_cdot})
yields that it remains to estimate the sum
\begin{align}
\label{sum_dl} \sum \limits _{t=0}^{[\tilde t(n)]} \sum \limits
_{m\ge [m_t^*]} d_{l_{t,m}} (W_{t,m}, \, l_q^{\nu_{t,m}}) =: S.
\end{align}
Let
\begin{align}
\label{1q_lam} \frac 1q =\frac{1-\lambda}{p_1}
+\frac{\lambda}{p_0}.
\end{align}
By Theorem \ref{gl_teor} and (\ref{rk_ptm}), (\ref{w_tm}),
(\ref{wkm_lq}), we have
\begin{align}
\label{dltmp} d_{l_{t,m}}(W_{t,m}, \, l_q^{\nu_{t,m}})
\underset{\mathfrak{Z}_0} {\lesssim} 2^{\mu_*k_*t}\cdot
2^{-(s_*+1/q-1/p_1)m}\left(l_{t,m}^{-1/2}\cdot
2^{\gamma_*k_*t/q}\cdot
2^{m/q}\right)^{\frac{1/p_1-1/q}{1/2-1/q}},
\end{align}
\begin{align}
\label{dltmq} d_{l_{t,m}}(W_{t,m}, \, l_q^{\nu_{t,m}})
\underset{\mathfrak{Z}_0} {\lesssim}
2^{((1-\lambda)\mu_*-\lambda\alpha_*)k_*t}\cdot
2^{-(1-\lambda)s_*m}.
\end{align}
If $\hat t(n)<\tilde t(n)$, we define the numbers $m_t'$ by
$$
2^{\mu_*k_*t}\cdot 2^{-(s_*+1/q-1/p_1)m_t'}\left(n^{-1/2}\cdot
2^{\gamma_*k_*t/q}\cdot
2^{m'_t/q}\right)^{\frac{1/p_1-1/q}{1/2-1/q}} =
2^{((1-\lambda)\mu_*-\lambda\alpha_*)k_*t}\cdot
2^{-(1-\lambda)s_*m_t'}.
$$
Then by (\ref{1lams}) and (\ref{1q_lam}) we get
\begin{align}
\label{3333} 2^{(\mu_*+\alpha_*)k_*t}\cdot
2^{-(s_*+1/p_0-1/p_1)m_t'}\left( n^{-1/2}\cdot
2^{\gamma_*k_*t/q}\cdot 2^{m_t'/q}\right)
^{\frac{1/p_1-1/p_0}{1/2-1/q}} =1.
\end{align}
If $m_t'=\hat m_t$, then $t=\tilde t(n)$ by (\ref{hat_mt}) and
(\ref{tn}); if $m_t'=\overline{m}_t$, then $t=\hat t(n)$ by
(\ref{line_mt}) and (\ref{tn_hat}). Notice that if the factor
multiplying  $m_t'$ in the exponent in left-hand-side of
(\ref{3333}) is zero, then $\tilde t(n)=\hat t(n)$.

We split the set $\{(t, \, m):\; 0\le t\le \tilde t(n), \;
m_t^*\le m<\infty\}$ into the following subsets:
$$
{\rm I} = \{(t, \, m):\; 0\le t\le \tilde t(n), \; m_t^*\le m\le
\overline{m}_t; \;m\le m_t', \; \text{if }\hat t(n)<t\le \tilde
t(n)\},
$$
$$
{\rm II}=\{(t, \, m):\; 0\le t\le \min(\tilde t(n), \, \hat t(n)),
\; m\ge \overline{m}_t\},
$$
$$
{\rm III} = \{(t, \, m):\; \hat t(n)<t\le \tilde t(n),\; m\ge
m_t'\}.
$$

For $(t, \, m)\in {\rm I} \cup {\rm II}$ we apply (\ref{dltmp}),
for $(t, \, m)\in {\rm III}$ we apply (\ref{dltmq}).

We apply Lemma 6 from \cite{vas_bes}, (\ref{hat_mt}), take into
account that $\theta _{j_*}<\min _{j\ne j_*} \theta_j$ by theorem
conditions, chose appropriately the numbers $\varepsilon$,
$t_1(n)$, $m_1(n)$ and obtain the estimate $S
\underset{\mathfrak{Z}_0}{\lesssim} S_1(n)+S_2(n)+S_3(n)+S_4(n)$,
where
$$
S_1(n) =2^{-(s_*+1/q-1/p_1)\hat m_{0}}\left(n^{-1/2}\cdot
 2^{\hat
m_{0}/q}\right)^{\frac{1/p_1-1/q}{1/2-1/q}}
\stackrel{(\ref{hat_mt})}{\underset{\mathfrak{Z}_0}{\lesssim}}
n^{-s_*},
$$
$$
S_2(n) =2^{-(s_*+1/q-1/p_1)\overline{m} _{0}}\left(n^{-1/2}\cdot
2^{\overline{m}_{0}/q}\right)^{\frac{1/p_1-1/q}{1/2-1/q}}
\stackrel{(\ref{line_mt})}{\underset{\mathfrak{Z}_0}{\lesssim}}
n^{-q(s_*+1/q-1/p_1)/2},
$$
$$
S_3(n) =2^{\mu_*k_*\tilde t(n)}\cdot
2^{-(s_*+1/q-1/p_1)\hat{m}_{\tilde t(n)}}\left(n^{-1/2}\cdot
2^{\gamma_*k_*\tilde t(n)/q}\cdot 2^{\hat{m}_{\tilde
t(n)}/q}\right)^{\frac{1/p_1-1/q}{1/2-1/q}}
\stackrel{(\ref{til_theta}),(\ref{hat_mt}),
(\ref{tn})}{\underset{\mathfrak{Z}_0}{\lesssim}} n^{-\tilde
\theta},
$$
$$
S_4(n)=2^{\mu_*k_*\hat t(n)}\cdot
2^{-(s_*+1/q-1/p_1)\overline{m}_{\hat t(n)}}\left(n^{-1/2}\cdot
2^{\gamma_*k_*\hat t(n)/q}\cdot 2^{\overline{m}_{\hat
t(n)}/q}\right)^{\frac{1/p_1-1/q}{1/2-1/q}}
\stackrel{(\ref{til_theta}), (\ref{line_mt}),
(\ref{tn_hat})}{\underset{\mathfrak{Z}_0}{\lesssim}} n^{-q\hat
\theta/2}.
$$

{\bf Case $p_0>q>2>p_1$.} Let $\frac 1q =\frac{1-\lambda}{p_1}
+\frac{\lambda}{p_0}$, $\frac 12 =\frac{1-\tilde \lambda}{p_1} +
\frac{\tilde \lambda}{p_0}$. As in previous case, we define
$m_t^*$ and $l_{t,m}$ by (\ref{mt_st}) and (\ref{l_tm}) and get
that it suffices to estimate the sum (\ref{sum_dl}). By Theorem
\ref{gl_teor}, (\ref{rk_ptm}), (\ref{w_tm}), (\ref{wkm_lq}) and
(\ref{wkm_l2}), we get
\begin{align}
\label{dltmp1} d_{l_{t,m}}(W_{t,m}, \, l_q^{\nu_{t,m}})
\underset{\mathfrak{Z}_0} {\lesssim} 2^{\mu_*k_*t}\cdot
2^{-(s_*+1/q-1/p_1)m}l_{t,m}^{-1/2}\cdot 2^{\gamma_*k_*t/q}\cdot
2^{m/q},
\end{align}
\begin{align}
\label{dltmq1} d_{l_{t,m}}(W_{t,m}, \, l_q^{\nu_{t,m}})
\underset{\mathfrak{Z}_0} {\lesssim}
2^{((1-\lambda)\mu_*-\lambda\alpha_*)k_*t}\cdot
2^{-(1-\lambda)s_*m},
\end{align}
\begin{align}
\label{dltm2} \begin{array}{c} d_{l_{t,m}}(W_{t,m}, \,
l_q^{\nu_{t,m}}) \underset{\mathfrak{Z}_0} {\lesssim} \\ \lesssim
2^{((1-\tilde \lambda)\mu_*-\tilde \lambda \alpha_*)k_*t}\cdot
2^{-((1-\tilde\lambda)(s_*+1/q-1/p_1)+\tilde \lambda(1/q-1/p_0))m}
l_{t,m}^{-1/2}\cdot 2^{\gamma_*k_*t/q}\cdot 2^{m/q}. \end{array}
\end{align}
We define the numbers $m_t'$ by
$$
2^{((1-\lambda)\mu_*-\lambda\alpha_*)k_*t}\cdot
2^{-(1-\lambda)s_*m_t'} =
$$
$$
=2^{((1-\tilde \lambda)\mu_*-\tilde \lambda \alpha_*)k_*t}\cdot
2^{-((1-\tilde\lambda)(s_*+1/q-1/p_1)+\tilde
\lambda(1/q-1/p_0))m_t'} n^{-1/2}\cdot 2^{\gamma_*k_*t/q}\cdot
2^{m_t'/q}.
$$
Taking into account (\ref{1lams}), we get that
$$
2^{(\mu_*+\alpha_*)k_*t}\cdot 2^{-(s_*+1/p_0-1/p_1)m'_t}\cdot
\left(n^{-1/2}\cdot 2^{\gamma_*k_*t}\cdot
2^{m'_t/q}\right)^{\frac{1/p_1-1/p_0}{1/2-1/q}}=1.
$$
Notice that if the factor multiplying of $m_t'$ in the exponent is
zero, then $\hat t(n) = \tilde t(n)$.

If $m'_t=\hat m_t$, then $t=\tilde t(n)$ by (\ref{hat_mt}) and
(\ref{tn}); if $m'_t=\overline{m}_t$, then $t=\hat t(n)$ and
$m'_t=m_t$ by (\ref{line_mt}) and (\ref{tn_hat}).

We split the set $\{(t, \, m):\; 0\le t\le \tilde t(n), \;
m_t^*\le m<\infty\}$ into the subsets
$$
{\rm I} = \{(t, \, m):\; 0\le t\le \tilde t(n), \; m_t^*\le
m\le\overline{m}_t, \; m\ge m_t\},
$$
$$
{\rm II}=\{(t, \, m):\; 0\le t\le \min(\hat t(n), \, \tilde t(n)),
\; m\ge \overline{m}_t\},
$$
$$
{\rm III}= \{(t, \, m):\; 0\le t\le \tilde t(n), \; m\ge m_t^*, \;
m\le m_t; \; m\le m_t'\text{ for }\hat t(n)<t\le \tilde t(n)\},
$$
$$
{\rm IV}=\{(t, \, m):\; \hat t(n)<t\le \tilde t(n), \; m\ge
m_t'\}.
$$

For $(t, \, m)\in {\rm I}\cup {\rm II}$ we apply (\ref{dltmp1});
for $(t, \, m)\in {\rm III}$ we apply (\ref{dltm2}); for $(t, \,
m)\in {\rm IV}$ we apply (\ref{dltmq1}).

As in the previous case, we apply Lemma 6 from \cite{vas_bes} and
get that for appropriate $\varepsilon>0$, $t_1(n)$ and $m_1(n)$
the estimate $S\underset{\mathfrak{Z}_0}{\lesssim}
S_1(n)+S_2(n)+S_3(n)+S_4(n)+S_5(n)$ holds with
$$
S_1(n) =2^{-(s_*+1/q-1/p_1)\hat m_{0}}n^{-1/2}\cdot 2^{\hat
m_{0}/q} \stackrel{(\ref{hat_mt})} {\underset{\mathfrak{Z}_0}
{\lesssim}} n^{-s_*-\frac 12 +\frac{1}{p_1}},
$$
$$
S_2(n) = 2^{-(s_*+1/q-1/p_1)\overline{m}_{0}}n^{-1/2}\cdot
2^{\overline{m}_{0}/q}
\stackrel{(\ref{line_mt})}{\underset{\mathfrak{Z}_0}{\lesssim}}
n^{-q(s_*+1/q-1/p_1)/2},
$$
$$
S_3(n) =2^{\mu_*k_*t(n)}\cdot 2^{-(s_*+1/q-1/p_1) \hat
m_{t(n)}}n^{-1/2}\cdot 2^{\gamma_*k_*t(n)/q}\cdot 2^{\hat
m_{t(n)}/q} \stackrel{(\ref{til_theta}), (\ref{hat_mt}),
(\ref{tn1})}{\underset{\mathfrak{Z}_0}{\lesssim}} n^{-\hat \theta
-\frac 12 +\frac 1q},
$$
$$
S_4(n) = 2^{\mu_*k_*\hat t(n)}\cdot
2^{-(s_*+1/q-1/p_1)\overline{m}_{\hat t(n)}}n^{-1/2}\cdot
2^{\gamma_*k_*\hat t(n)/q}\cdot 2^{\overline{m}_{\hat t(n)}/q}
\stackrel{(\ref{til_theta}), (\ref{line_mt}),
(\ref{tn_hat})}{\underset{\mathfrak{Z}_0}{\lesssim}} n^{-q\hat
\theta/2},
$$
$$
S_5(n) =2^{((1-\lambda)\mu_*-\lambda\alpha_*)k_*\tilde t(n)}\cdot
2^{-(1-\lambda)s_*\hat m_{\tilde t(n)}} \stackrel{(\ref{1111}),
(\ref{til_mt_t})}{\underset{\mathfrak{Z}_0}{\asymp}}
$$
$$
\asymp 2^{(1-\lambda)(\mu_*+\alpha_*)k_*\tilde t(n)}\cdot
2^{-\alpha_*k_*\tilde t(n)}\cdot 2^{-(1-\lambda)
(\mu_*+\alpha_*+\gamma_*/p_0-\gamma_*/p_1)k_*\tilde
t(n)}\stackrel{(\ref{til_theta}),
(\ref{tn})}{\underset{\mathfrak{Z}_0}{\asymp}} n^{-\tilde \theta}.
$$

\smallskip

Now we consider $p_0\le q$.

{\bf Case $p_0\le q\le 2$, $p_1\le q$.} Let $t_*(n)=0$ or
$t_*(n)=t(n)$ (it will be chosen later by $\mathfrak{Z}_0$). We
define the numbers $m_t^*$ by equation $2^{m_t^*} = 2^{\hat m_t
-\varepsilon |t-t_*(n)|}$. By (\ref{tn1}) and (\ref{2t}),
$m_t^*>0$ for $t_0\le t\le t(n)$ and small $\varepsilon>0$.

We set $Pf|_{\Omega_t}=P_{t,\lceil m_t^*\rceil}f$, $t_0\le t\le
[t(n)]$, $Pf|_{\tilde\Omega _{[t(n)]+1}}=0$. Then ${\rm rk}\, P
\stackrel{(\ref{rk_ptm}),(\ref{hat_mt})}{\underset{\mathfrak{Z}_0}{\lesssim}}
n$,
$$
\|f-Pf\|_{Y_q(\Omega)} \stackrel{(\ref{f_m_ptm}),(\ref{p0lq_emb}),
(\ref{hat_mt})}{\underset{\mathfrak{Z}_0}{\lesssim}}
2^{-\frac{\alpha_*(s_*+1/q-1/p_1)
-\mu_*(1/p_0-1/q)}{s_*+1/p_0-1/p_1}k_*t(n)} + $$$$+\sum \limits
_{t_0\le t\le t(n)} 2^{\mu_*k_*t}\cdot
2^{(s_*+1/q-1/p_1)\gamma_*k_*t}\cdot n^{-s_*-1/q+1/p_1}\cdot
2^{\varepsilon(s_*+1/q-1/p_1)|t-t_*(n)|}.
$$
If $\mu_*+\gamma_*(s_*+1/q-1/p_1)<0$, we set $t_*(n)=0$ and for
small $\varepsilon>0$ we get $\|f-Pf\|_{Y_q(\Omega)}
\underset{\mathfrak{Z}_0}{\lesssim} n^{-s_*-1/q+1/p_1}$; if
$\mu_*+\gamma_*(s_*+1/q-1/p_1)>0$, we set $t_*(n)=t(n)$ and get
$\|f-Pf\|_{Y_q(\Omega)} \stackrel{(\ref{til_theta}), (\ref{tn1})}
{\underset{\mathfrak{Z}_0} {\lesssim}} n^{-\hat \theta}$. If
$\mu_*+\gamma_*(s_*+1/q-1/p_1)=0$, then $s_*+\frac
1q-\frac{1}{p_1}=\hat \theta$, which contradicts with theorem
conditions.

{\bf Case $p_0< q\le 2$, $p_1>q$}. Here we argue as for $p_0>q$,
$p_1<q\le 2$; the number $\lambda$ is defined by (\ref{1q1l1p1}).
The numbers $m_t^*$, $m_t^{**}$ are defined by (\ref{2_mt_st}).
Notice that by (\ref{tn}), (\ref{tn1}) and inequality $p_0<p_1$ we
have $\tilde t(n)<t(n)$. It remains to estimate the sum
$$
\sum \limits _{t_0\le t\le \tilde t(n)}
\|f-P_{t,m^*_t}f\|_{Y_q(\Omega_t)} + \sum \limits _{\tilde
t(n)<t\le t(n)} \sum \limits _{m\ge m_t^{**}}
\|P_{t,m+1}f-P_{t,m}f\|_{Y_q(\Omega_t)}
+\|f\|_{Y_q(\Omega_{[t(n)]+1})}=:A.
$$
By (\ref{f_m_ptm}), (\ref{wkm_lqd0}), (\ref{p0lq_emb}),
(\ref{til_theta}), (\ref{hat_mt}), (\ref{tn1}), (\ref{2_mt_st})
$$
A\underset{\mathfrak{Z}_0}{\lesssim}\sum \limits _{0\le t\le
\tilde t(n)} 2^{(\mu_*+\gamma_*/q-\gamma_*/p_1)k_*t}\cdot
2^{s_*\gamma_*k_*t}\cdot n^{-s_*}\cdot
2^{s_*\varepsilon|t-t_*(n)|} +
$$
$$
+\sum \limits _{\tilde t(n)<t\le t(n)} \sum \limits _{m\ge
m_t^{**}} 2^{((1-\lambda)\mu_*-\lambda\alpha_*)k_*t}\cdot
2^{-ms_*(1-\lambda)} + n^{-\hat{\theta}}
\stackrel{(\ref{2_mt_st})}{\underset{\mathfrak{Z}_0}{\lesssim}}
$$
$$
\lesssim \sum \limits _{0\le t\le \tilde t(n)}
2^{(\mu_*+\gamma_*/q-\gamma_*/p_1)k_*t}\cdot
2^{s_*\gamma_*k_*t}\cdot n^{-s_*}\cdot
2^{s_*\varepsilon|t-t_*(n)|} +
$$
$$
+\sum \limits _{\tilde t(n)<t\le t(n)}
2^{((1-\lambda)\mu_*-\lambda\alpha_*)k_*t}\cdot
2^{\gamma_*s_*(1-\lambda)k_*t}\cdot n^{-s_*(1-\lambda)} \cdot
2^{\varepsilon s_*|t-t_{**}(n)|(1-\lambda)} +
n^{-\hat{\theta}}=:S.
$$
Since $\theta_{j_*}<\min _{j\ne j_*}\theta_j$ by theorem
conditions and
$$
2^{(\mu_*+\gamma_*/q-\gamma_*/p_1)k_*\tilde t(n)}\cdot
2^{s_*\gamma_*k_*\tilde t(n)}\cdot n^{-s_*}
\stackrel{(\ref{tn})}{=}
2^{((1-\lambda)\mu_*-\lambda\alpha_*)k_*\tilde t(n)}\cdot
2^{\gamma_*s_*(1-\lambda)k_*\tilde t(n)}\cdot n^{-s_*(1-\lambda)},
$$
we have $S \underset{\mathfrak{Z}_0}{\lesssim}
S_1(n)+S_2(n)+S_3(n)+n^{-\hat\theta}$, where
$$
S_1(n)= n^{-s_*}, \quad S_2(n)
=2^{(\mu_*+\gamma_*/q-\gamma_*/p_1)k_*\tilde t(n)}\cdot
2^{s_*\gamma_*k_*\tilde t(n)}\cdot n^{-s_*}
\stackrel{(\ref{til_theta}),
(\ref{tn})}{\underset{\mathfrak{Z}_0}{\lesssim}} n^{-\tilde
\theta},
$$
$$
S_3(n) = 2^{((1-\lambda)\mu_*-\lambda\alpha_*)k_*t(n)}\cdot
2^{\gamma_*s_*(1-\lambda)k_*t(n)}\cdot n^{-s_*(1-\lambda)}
\stackrel{(\ref{til_theta}), (\ref{tn1}),
(\ref{1q1l1p1})}{\underset{\mathfrak{Z}_0}{\lesssim}} n^{-\hat
\theta}.
$$

Further we consider $q>2$.

The numbers $t_1(n)\le \hat t(n)$, $m_1(n)\le n^{q/2}$ will be
chosen later by $\mathfrak{Z}_0$. The numbers $l_{t,m}$ are
defined by (\ref{l_tm}).

Let
\begin{align}
\label{2_mt_st1} 2^{m_t^*} = \max \{2^{\hat m_t-\varepsilon
|t-t_1(n)|}, \, 1\}.
\end{align}
By (\ref{tn_hat}) and (\ref{2t}), for $t_0\le t\le t(n)$ and
sufficiently small $\varepsilon>0$ we have $2^{m_t^*}=2^{\hat
m_t-\varepsilon |t-t_1(n)|}$; hence,
\begin{align}
\label{rank_p} \sum \limits _{t_0\le t\le [t(n)]} {\rm rk}\,
P_{t,\lceil m_t^*\rceil} \stackrel{(\ref{rk_ptm}), (\ref{hat_mt}),
(\ref{2_mt_st1})} {\underset{\mathfrak{Z}_0}{\lesssim}} n.
\end{align}

For $f\in BX_{p_0}(\Omega)\cap BX_{p_1}(\Omega)$ we have
\begin{align}
\label{f_sum} f = \sum \limits _{t_0\le t\le [\hat t(n)]}
P_{t,\lceil m_t^*\rceil }f + \sum \limits _{t_0\le t\le [\hat
t(n)]} \sum \limits _{m\ge \lceil m_t^*\rceil} (P_{t,m+1}f -
P_{t,m}f) + f\cdot \chi _{\tilde \Omega _{[\hat t(n)]+1}}.
\end{align}
By (\ref{p0lq_emb}), (\ref{til_theta}) and (\ref{tn_hat}),
\begin{align}
\label{ost} \|f\|_{Y_q(\tilde \Omega_{[\hat t(n)]+1})}
\underset{\mathfrak{Z}_0}{\lesssim} n^{-q\hat \theta/2}.
\end{align}
From (\ref{dl}), (\ref{dl1}), (\ref{l_tm}), (\ref{rank_p}),
(\ref{f_sum}), (\ref{ost}) it follows that it suffices to estimate
the sum
$$
\sum \limits _{t_0\le t\le [\hat t(n)]} \sum \limits _{m\ge \lceil
m_t^*\rceil} d_{l_{t,m}} (W_{t,m}, \, l_q^{\nu_{t,m}})+ $$$$+\sum
\limits _{[t(n)]<t\le [\hat t(n)]} 2^{-\alpha_*k_*t}\cdot
2^{-m_t^*(1/q-1/p_0)}d_{l_{t,\lceil
m_t^*\rceil}}(B_{p_0}^{\nu'_{t,\lceil m_t^*\rceil}}, \,
l_q^{\nu'_{t,\lceil m_t^*\rceil}}) =:S_0.
$$

{\bf Case $\max\{p_0, \, p_1\}\le 2< q$.} We have $B_{p_0}^\nu
\subset B_2^\nu$, $B_{p_1}^\nu\subset B_2^\nu$; therefore, we get
by (\ref{w_tm})
$$
S_0\le \sum \limits _{0\le t\le t(n)} \sum \limits _{m\ge m_t^*}
2^{\mu_*k_*t}\cdot
2^{-m(s_*+1/q-1/p_1)}d_{l_{t,m}}(B_2^{\nu_{t,m}}, \,
l_q^{\nu_{t,m}}) +
$$
$$
+\sum \limits _{t(n)\le t\le \hat t(n)} \sum \limits _{m_t^*\le
m\le m_t}2^{-\alpha_*k_*t}\cdot
2^{-m(1/q-1/p_0)}d_{l_{t,m}}(B_2^{\nu_{t,m}}, \, l_q^{\nu_{t,m}})
+$$$$+\sum \limits _{t(n)\le t\le \hat t(n)} \sum \limits _{m>
m_t} 2^{\mu_*k_*t}\cdot
2^{-m(s_*+1/q-1/p_1)}d_{l_{t,m}}(B_2^{\nu_{t,m}}, \,
l_q^{\nu_{t,m}})=:S.
$$
Applying Theorem \ref{gl_teor} and taking into account
(\ref{rk_ptm}), we get that in the first and the third sums there
are decreasing geometric progressions in $m$, and in the second
sum there is an increasing geometric progression in $m$. Taking
into account (\ref{line_mt}) and (\ref{mt_t}), we get
$$
S\underset{\mathfrak{Z}_0}{\lesssim} \sum \limits _{0\le t\le
t(n)} \sum \limits _{m_t^*\le m\le
\overline{m}_t}2^{\mu_*k_*t}\cdot 2^{-m(s_*+1/q-1/p_1)} 2^{\gamma
k_*t/q}\cdot 2^{m/q}\cdot l_{t,m}^{-1/2}+
$$
$$
+ \sum \limits _{t(n)\le t\le\hat t(n)} \sum \limits
_{m=m_t}^{\overline{m}_t}2^{\mu_*k_*t}\cdot 2^{-m(s_*+1/q-1/p_1)}
2^{\gamma k_*t/q}\cdot 2^{m/q}\cdot l_{t,m}^{-1/2} =:\tilde S.
$$
Notice that $\theta_{j_*}<\min _{j\ne j_*}\theta_j$ by theorem
condition. Applying Lemma 6 from \cite{vas_bes} and taking into
account (\ref{hat_mt}), (\ref{line_mt}), (\ref{1111}),
(\ref{l_tm}), (\ref{2_mt_st1}), we get that for appropriate
$t_1(n)$ and $m_1(n)$ the estimate $\tilde S
\underset{\mathfrak{Z}_0}{\lesssim} S_1(n)+S_2(n)+ S_3(n) +S_4(n)$
holds with
$$
S_1(n) = n^{-s_*-1/q+1/p_1}\cdot n^{1/q-1/2} = n^{-s_*-1/2+1/p_1},
\quad S_2(n) = n^{-\frac{q}{2}(s_*+1/q-1/p_1)},
$$
$$
S_3(n) =2^{\mu_*k_*t(n)}\cdot 2^{\gamma_*(s_*+1/q-1/p_1)k_*t(n)}
\cdot n^{-s_*-1/q+1/p_1} \cdot n^{1/q-1/2}
\stackrel{(\ref{til_theta}),
(\ref{tn1})}{\underset{\mathfrak{Z}_0}{\lesssim}} n^{-\hat \theta
+1/q-1/2},
$$
$$
S_4(n) = 2^{\mu_*k_*\hat t(n)}\cdot
2^{\gamma_*(s_*+1/q-1/p_1)k_*\hat t(n)}\cdot n^{-\frac q2
(s_*+1/q-1/p_1)} \stackrel{(\ref{til_theta}),
(\ref{tn_hat})}{\underset{\mathfrak{Z}_0}{\lesssim}} n^{-q\hat
\theta/2}.
$$

{\bf Case $2\le p_0<q$, $2\le p_1\le q$.} We define the numbers
$m_t'$ by
\begin{align}
\label{mt_pr} \begin{array}{c} 2^{\mu_*k_*t}\cdot
2^{-m_t'(s_*+1/q-1/p_1)} \left(n^{-\frac 12}\cdot
2^{\frac{\gamma_*k_*t}{q}}\cdot 2^{\frac {m_t'}{q}}\right)
^{\frac{1/p_1-1/q}{1/2-1/q}} =
\\=2^{-\alpha_*k_*t}\cdot 2^{-m_t'(1/q-1/p_0)}\left(n^{-\frac
12}\cdot 2^{\frac{\gamma_*k_*t}{q}}\cdot 2^{\frac
{m_t'}{q}}\right)^{\frac{1/p_0-1/q}{1/2-1/q}}. \end{array}
\end{align}
Then
$$
2^{(\mu_*+\alpha_*)k_*t}\cdot
2^{-m_t'\left(s_*+1/p_0-1/p_1\right)} = \left(n^{-\frac 12}\cdot
2^{\frac{\gamma_*k_*t}{q}}\cdot 2^{\frac
{m_t'}{q}}\right)^{\frac{1/p_0-1/p_1}{1/2-1/q}};
$$
by (\ref{hat_mt}), (\ref{line_mt}), (\ref{1111}), (\ref{tn}),
(\ref{2222}), (\ref{tn_hat}), we have
\begin{align}
\label{mtn} m'_{\tilde t(n)}=\tilde m_{\tilde t(n)}=\hat m_{\tilde
t(n)}, \quad m'_{\hat t(n)} = m_{\hat t(n)} = \overline{m}_{\hat
t(n)}.
\end{align}

Applying Theorem \ref{gl_teor} together with (\ref{rk_ptm}) and
(\ref{w_tm}), we have
$$
S_0\underset{\mathfrak{Z}_0}{\lesssim} \sum \limits _{0\le t\le
\min\{\tilde t(n), \, \hat t(n)\}} \sum \limits _{m_t^*\le m\le
\overline{m}_t} 2^{\mu_*k_*t}\cdot 2^{-m(s_*+1/q-1/p_1)}
\left(l_{t,m}^{-\frac 12}\cdot 2^{\frac{\gamma_*k_*t}{q}}\cdot
2^{\frac mq}\right)^{\frac{1/p_1-1/q}{1/2-1/q}} +
$$
$$
+ \sum \limits _{\tilde t(n)<t\le \hat t(n)} \sum \limits
_{m^*_t\le m\le m_t'}2^{-\alpha_*k_*t}\cdot
2^{-m(1/q-1/p_0)}\left(l_{t,m}^{-\frac 12}\cdot
2^{\frac{\gamma_*k_*t}{q}}\cdot 2^{\frac
mq}\right)^{\frac{1/p_0-1/q}{1/2-1/q}} +
$$
$$
+ \sum \limits _{\tilde t(n)<t\le \hat t(n)} \sum \limits _{
m_t'\le m\le \overline{m}_t}2^{\mu_*k_*t}\cdot
2^{-m(s_*+1/q-1/p_1)} \left(l_{t,m}^{-\frac 12}\cdot
2^{\frac{\gamma_*k_*t}{q}}\cdot 2^{\frac
mq}\right)^{\frac{1/p_1-1/q}{1/2-1/q}}+
$$
$$
+\sum \limits _{0\le t\le \hat t(n)}\sum \limits _{m\ge
\overline{m}_t} 2^{\mu_*k_*t}\cdot 2^{-m(s_*+1/q-1/p_1)}=:S.
$$
In the second sum there is an increasing geometric progression in
$m$, in the last sum there is a decreasing geometric progression
in $m$. Applying Lemma 6 from \cite{vas_bes} and taking into
account (\ref{mt_pr}), (\ref{mtn}), we get that $S
\underset{\mathfrak{Z}_0}{\lesssim} S_1(n)+S_2(n)+S_3(n)+S_4(n)$,
where
$$
S_1(n)= n^{-s_*-1/q+1/p_1} \left(n^{-\frac 12}\cdot
 n^{\frac
1q}\right)^{\frac{1/p_1-1/q}{1/2-1/q}} = n^{-s_*}, \quad S_2(n)
=n^{-\frac q2 (s_*+1/q-1/p_1)},
$$
$$
S_3(n) = 2^{\mu_*k_*\tilde t(n)}\cdot
2^{(s_*+1/q-1/p_1)\gamma_*k_*\tilde t(n)}n^{-s_*-1/q+1/p_1}
\left(n^{-\frac 12}\cdot n^{\frac
1q}\right)^{\frac{1/p_1-1/q}{1/2-1/q}}
\stackrel{(\ref{til_theta}),
(\ref{tn})}{\underset{\mathfrak{Z}_0}{\lesssim}} n^{-\tilde
\theta},
$$
$$
S_4(n) =2^{\mu_*k_*\hat t(n)}\cdot 2^{-m'_{\hat
t(n)}(s_*+1/q-1/p_1)} \stackrel{(\ref{til_theta}), (\ref{tn_hat}),
(\ref{mtn})}{\underset{\mathfrak{Z}_0}{\lesssim}} n^{-\frac{q\hat
\theta}{2}}.
$$

{\bf Case $q>2$, $p_0<q$, $p_1\le q$, $\min\{p_0, \,
p_1\}<2<\max\{p_0, \, p_1\}$.} Let
\begin{align}
\label{12till} \frac 12 = \frac{1-\tilde\lambda}{p_1}
+\frac{\tilde\lambda}{p_0}.
\end{align}
Then (\ref{wkm_l2}) holds.

If $p_0>2>p_1$, the number $m_t'$ is defined by equation
\begin{align}
\label{p02p1}
\begin{array}{c}
2^{((1-\tilde\lambda)\mu_*-\tilde\lambda \alpha_*)k_*t}\cdot
2^{-((1-\tilde\lambda)(s_*+1/q-1/p_1)+\tilde\lambda(1/q-1/p_0))m_t'}
\cdot n^{-\frac 12}\cdot 2^{\frac{\gamma_*k_*t}{q}}\cdot
2^{\frac{m'_t}{q}} =
\\
= 2^{-\alpha_*k_*t}\cdot 2^{(1/p_0-1/q)m_t'} \left(n^{-\frac 12}
\cdot 2^{\frac{\gamma_*k_*t}{q}}\cdot 2^{\frac{m_t'}{q}}\right)
^{\frac{1/p_0-1/q}{1/2-1/q}};
\end{array}
\end{align}
if $p_1>2>p_0$, it is defined by equation
$$
\begin{array}{c}
2^{((1-\tilde\lambda)\mu_*-\tilde\lambda \alpha_*)k_*t}\cdot
2^{-((1-\tilde\lambda)(s_*+1/q-1/p_1)+\tilde\lambda(1/q-1/p_0))m_t'}
\cdot n^{-\frac 12}\cdot 2^{\frac{\gamma_*k_*t}{q}}\cdot
2^{\frac{m'_t}{q}} =
\\
= 2^{\mu_*k_*t}\cdot 2^{-m'_t(s_*+1/q-1/p_1)}\left(n^{-\frac 12}
\cdot 2^{\frac{\gamma_*k_*t}{q}}\cdot 2^{\frac{m_t'}{q}}\right)
^{\frac{1/p_1-1/q}{1/2-1/q}}.
\end{array}
$$
In both cases we get by (\ref{12till})
\begin{align}
\label{2p0p1} 2^{(\mu_*+\alpha_*)k_*t}\cdot
2^{-m_t'(s_*+1/p_0-1/p_1)}= \left(n^{-\frac 12}\cdot
2^{\frac{\gamma_*k_*t}{q}}\cdot 2^{\frac{m_t'}{q}}\right)
^{\frac{1/p_0-1/p_1}{1/2-1/q}};
\end{align}
from (\ref{hat_mt}), (\ref{line_mt}), (\ref{1111}), (\ref{tn}),
(\ref{2222}), (\ref{tn_hat}) we get that
\begin{align}
\label{mtiln} m'_{\tilde t(n)}=\hat m_{\tilde t(n)}=\tilde
m_{\tilde t(n)}, \quad m'_{\hat t(n)} = \overline{m}_{\hat t(n)}=
m_{\hat t(n)}.
\end{align}
Notice that the equality $s_*+\frac{1}{p_0}-\frac{1}{p_1} = \frac
1q \cdot \frac{\frac{1}{p_1}-\frac{1}{p_0}}{\frac 12-\frac1q}$
holds if and only if $\tilde t(n)=\hat t(n)$ (then $m_t'$ cannot
be defined by (\ref{2p0p1})).

Let us estimate the sum $S_0$.

For $p_0>2>p_1$ we define the subsets
$$
{\rm I}=\{(t, \, m):\; t\ge 0, \; m_t^*\le m\le \overline{m}_t, \;
m\ge m_t\},
$$
$$
{\rm II}=\{(t, \, m):\; 0\le t\le \hat t(n), \; m\ge
\overline{m}_t\},
$$
$$
{\rm III} =\{(t, \, m):\; t(n)\le t\le \hat t(n), \; m\ge m_t^*,
\; m\le m_t; \; m\ge m_t'\; \text{for}\; \tilde t(n)<t\le \hat
t(n)\},
$$
$$
{\rm IV} =\{(t, \, m):\; \tilde t(n)<t\le \hat t(n),\; m\ge m_t^*,
\; m\le m_t'\}.
$$

For $(t, \, m)\in {\rm I}\cup {\rm II}$ we apply the inclusion
$W_{t,m}\subset 2^{\mu_*k_*t}\cdot
2^{-(s_*+1/q-1/p_1)m}B_{p_1}^{\nu_{t,m}}$, for $(t, \, m)\in {\rm
III}$ we apply (\ref{wkm_l2}), for $(t, \, m)\in {\rm IV}$ we
apply the inclusion $W_{t,m} \subset 2^{-\alpha_*k_*t}\cdot
2^{-m(1/q-1/p_0)} B_{p_0}^{\nu_{t,m}}$. By Theorem \ref{gl_teor},
we get
$$
S_0 \underset{\mathfrak{Z}_0}{\lesssim} \sum \limits _{(t, \,
m)\in {\rm I}} 2^{\mu_*k_*t}\cdot 2^{-m(s_*+1/q-1/p_1)}
l_{t,m}^{-\frac{1}{2}}2^{\frac{\gamma_*k_*t}{q}}\cdot
2^{\frac{m}{q}}+ \sum \limits _{(t, \, m)\in {\rm II}}
2^{\mu_*k_*t}\cdot 2^{-m(s_*+1/q-1/p_1)} +
$$
$$
+ \sum \limits _{(t, \, m)\in {\rm III}}
2^{((1-\tilde\lambda)\mu_*-\tilde\lambda \alpha_*)k_*t}\cdot
2^{-((1-\tilde\lambda)(s_*+1/q-1/p_1) +\tilde\lambda(1/q-1/p_0))m}
\cdot l_{t,m}^{-\frac 12}\cdot 2^{\frac{\gamma_*k_*t}{q}}\cdot
2^{\frac{m}{q}} +
$$
$$
+ \sum \limits _{(t, \, m)\in {\rm IV}} 2^{-\alpha_*k_*t}\cdot
2^{(1/p_0-1/q)m} \left(l_{t,m}^{-\frac 12} \cdot
2^{\frac{\gamma_*k_*t}{q}}\cdot 2^{\frac{m}{q}}\right)
^{\frac{1/p_0-1/q}{1/2-1/q}}=:S.
$$
In the second sum there is a decreasing geometric progression in
$m$, in the fourth sum there is an increasing geometric
progression in $m$. Applying Lemma 6 from \cite{vas_bes} and
taking into account (\ref{hat_mt}), (\ref{line_mt}), (\ref{mt_t}),
(\ref{p02p1}), (\ref{2p0p1}), (\ref{mtiln}), we get that for
appropriate $t_1(n)$ and $m_1(n)$ the estimate
$S\underset{\mathfrak{Z}}{\lesssim} S_1(n) +
S_2(n)+S_3(n)+S_4(n)+S_5(n)$ holds with
$$
S_1(n) = n^{-(s_*+1/q-1/p_1)} n^{-1/2}\cdot n^{1/q} =
n^{-s_*-1/2+1/p_1},
$$
$$
S_2(n) = n^{-\frac{q}{2}(s_*+1/q-1/p_1)} n^{-1/2}\cdot
(n^{q/2})^{\frac 1q} = n^{-\frac{q}{2}(s_*+1/q-1/p_1)},
$$
$$
S_3(n) = 2^{\mu_*k_*t(n)}\cdot 2^{\gamma_*(s_*+1/q-1/p_1)k_*t(n)}
\cdot n^{-s_*-1/q+1/p_1} n^{-1/2+1/q} \stackrel{(\ref{til_theta}),
(\ref{tn1})}{\underset{\mathfrak{Z}_0}{\lesssim}}n^{-\hat \theta
+1/q-1/2},
$$
$$
S_4(n) = 2^{\mu_*k_*\hat t(n)}\cdot
2^{\gamma_*(s_*+1/q-1/p_1)k_*\hat t(n)} \cdot n^{-\frac{q}{2}
(s_*+1/q-1/p_1)} \stackrel{(\ref{til_theta}),
(\ref{tn_hat})}{\underset{\mathfrak{Z}_0}{\lesssim}} n^{-q\hat
\theta/2},
$$
$$
S_5(n) = 2^{-\alpha_*k_*\tilde t(n)}\cdot 2^{-\gamma_*
(1/p_0-1/q)k_*\tilde t(n)}\cdot n^{1/p_0-1/q} \cdot n^{1/q-1/p_0}
\stackrel{(\ref{til_theta}),
(\ref{tn})}{\underset{\mathfrak{Z}_0}{\lesssim}} n^{-\tilde
\theta}.
$$

For $p_1>2>p_0$ we set
$$
{\rm I}=\{(t, \, m):\; t\ge 0, \; m_t^*\le m\le \overline{m}_t, \;
m\ge m_t'\},
$$
$$
{\rm II}=\{(t, \, m):\; 0\le t\le \hat t(n), \; m\ge
\overline{m}_t\},
$$
$$
{\rm III} =\{(t, \, m):\; m\ge m_t^*, \; m\le m_t', \; m\ge m_t\},
$$
$$
{\rm IV} =\{(t, \, m):\; t\le \hat t(n),\; m\ge m_t^*, \; m\le
m_t\}.
$$

As in the previous case, we get that for appropriate $t_1(n)$ and
$m_1(n)$
$$
S_0 \underset{\mathfrak{Z}_0}{\lesssim} \sum \limits _{(t, \,
m)\in {\rm I}} 2^{\mu_*k_*t}\cdot 2^{-m(s_*+1/q-1/p_1)}
\left(l_{t,m}^{-1/2}2^{\gamma_*k_*t/q}\cdot
2^{m/q}\right)^{\frac{1/p_1-1/q}{1/2-1/q}}+ \sum \limits _{(t, \,
m)\in {\rm II}} 2^{\mu_*k_*t}\cdot 2^{-m(s_*+1/q-1/p_1)} +
$$
$$
+ \sum \limits _{(t, \, m)\in {\rm III}}
2^{((1-\tilde\lambda)\mu_*-\tilde\lambda \alpha_*)k_*t}\cdot
2^{-((1-\tilde\lambda)(s_*+1/q-1/p_1) +\tilde\lambda(1/q-1/p_0))m}
\cdot l_{t,m}^{-\frac 12}\cdot 2^{\frac{\gamma_*k_*t}{q}}\cdot
2^{\frac{m}{q}} +
$$
$$
+ \sum \limits _{(t, \, m)\in {\rm IV}} 2^{-\alpha_*k_*t}\cdot
2^{(1/p_0-1/q)m} l_{t,m}^{-\frac 12} \cdot
2^{\frac{\gamma_*k_*t}{q}}\cdot
2^{\frac{m}{q}}\underset{\mathfrak{Z}}{\lesssim}
$$
$$ \lesssim S_1(n) + S_2(n)+S_3(n)+S_4(n)+S_5(n),$$
where
$$
S_1(n) = n^{-(s_*+1/q-1/p_1)} \left(n^{-1/2}\cdot
n^{1/q}\right)^{\frac{1/p_1-1/q}{1/2-1/q}} = n^{-s_*},
$$
$$
S_2(n) = n^{-\frac{q}{2}(s_*+1/q-1/p_1)} \left(n^{-1/2}\cdot
(n^{q/2})^{\frac 1q}\right)^{\frac{1/p_1-1/q}{1/2-1/q}} =
n^{-\frac{q}{2}(s_*+1/q-1/p_1)},
$$
$$
S_3(n) = 2^{\mu_*k_*\tilde t(n)}\cdot
2^{\gamma_*(s_*+1/q-1/p_1)k_*\tilde t(n)} \cdot n^{-s_*-1/q+1/p_1}
n^{-1/p_1+1/q} \stackrel{(\ref{til_theta}),
(\ref{tn})}{\underset{\mathfrak{Z}_0}{\lesssim}} n^{-\tilde
\theta},
$$
$$
S_4(n) = 2^{\mu_*k_*\hat t(n)}\cdot
2^{\gamma_*(s_*+1/q-1/p_1)k_*\hat t(n)} \cdot n^{-\frac{q}{2}
(s_*+1/q-1/p_1)} \stackrel{(\ref{til_theta}),
(\ref{tn_hat})}{\underset{\mathfrak{Z}_0}{\lesssim}} n^{-q\hat
\theta/2},
$$
$$
S_5(n) = 2^{-\alpha_*k_* t(n)}\cdot 2^{-\gamma_*
(1/p_0-1/q)k_*t(n)}\cdot n^{1/p_0-1/q} \cdot n^{1/q-1/2}
\stackrel{(\ref{til_theta}),
(\ref{tn1})}{\underset{\mathfrak{Z}_0}{\lesssim}} n^{-\hat
\theta+1/q-1/2}.
$$

{\bf Case $q>2$, $2\le p_0<q$, $p_1>q$.} Let $\frac 1q =
\frac{1-\lambda}{p_1} +\frac{\lambda}{p_0}$. Then (\ref{wkm_lq})
holds.

We define the numbers $m_t'$ by equation
\begin{align}
\label{1mt_pr}
\begin{array}{c}
2^{-\alpha_*k_*t}\cdot 2^{m'_t(1/p_0-1/q)}\left(n^{-\frac
12}2^{\frac{\gamma_*k_*t}{q}}\cdot 2^{\frac{m'_t}{q}}\right)
^{\frac{1/p_0-1/q}{1/2-1/q}}=
\\
= 2^{((1-\lambda)\mu_*-\lambda\alpha_*)k_*t}\cdot
2^{-((1-\lambda)(s_*+1/q-1/p_1)+\lambda(1/q-1/p_0))m'_t}.
\end{array}
\end{align}
Then
\begin{align}
\label{1mt_pr1} 2^{(\mu_*+\alpha_*)k_*t}\cdot
2^{-m'_t(s_*+1/p_0-1/p_1)} = \left(n^{-\frac 12}\cdot
2^{\frac{\gamma_*k_*t}{q}}\cdot
2^{\frac{m_t'}{q}}\right)^{\frac{1/p_0-1/p_1}{1/2-1/q}};
\end{align}
by (\ref{hat_mt}), (\ref{line_mt}), (\ref{1111}), (\ref{tn}),
(\ref{2222}), (\ref{tn_hat}), we have
\begin{align}
\label{hat_m} \hat m_{\tilde t(n)}=\tilde m_{\tilde
t(n)}=m'_{\tilde t(n)}, \quad \overline{m}_{\hat t(n)}=m_{\hat
t(n)}= m'_{\hat t(n)}.
\end{align}

We define the number $\overline{t}(n)$ by equation $\tilde
m_{\overline{t}(n)}=\overline{m}_{\overline{t}(n)}$. We split the
set $\{(t, \, m):\; 0\le t\le \hat t(n), \; m\ge m_t^*\}$ into
subsets
$$
{\rm I} =\{(t, \, m):\; 0\le t\le \hat t(n), \; m_t^*\le m\le
\overline{m}_t, \; m\ge \tilde m_t\},
$$
$$
{\rm II} = \{(t, \, m):\; m\ge \overline{m}_t, \; 0\le t\le
\overline{t}(n)\},
$$
$$
{\rm III} =\{(t, \, m):\; m\le \tilde m_t, \; m\le \overline{m}_t,
\; m\ge m_t'\},
$$
$$
{\rm IV} =\{(t, \, m):\; \overline{t}(n)\le t\le \hat t(n), \;
m\ge \overline{m}_t\},
$$
$$
{\rm V} = \{(t, \, m):\; m\le m_t', \; m\ge m_t^*, \; t\le \hat
t(n)\}.
$$

For $(t, \, m)\in {\rm I}\cup {\rm II}$ we use the inclusion
$W_{t,m} \subset 2^{\mu_*k_*t}\cdot 2^{-(s_*+1/q-1/p_1)m}
B_{p_1}^{\nu_{t,m}}$, for $(t, \, m)\in {\rm III}\cup {\rm IV}$ we
apply (\ref{wkm_lq}), for $(t, \, m)\in {\rm V}$ we use the
inclusion $W_{t,m} \subset 2^{-\alpha_*k_*t}\cdot
2^{-(1/q-1/p_0)m} B_{p_0} ^{\nu_{t,m}}$.

By Theorem \ref{gl_teor} and (\ref{pietsch_stesin}), we get
$$
S_0\underset{\mathfrak{Z}_0}{\lesssim} \sum \limits _{(t, \, m)\in
{\rm I}\cup {\rm II}} 2^{\mu_*k_*t}\cdot
2^{-m(s_*+1/q-1/p_1)}(2^{\gamma_*k_*t}\cdot 2^m)^{1/q-1/p_1}+
$$
$$
+ \sum \limits _{(t, \, m)\in {\rm III}\cup {\rm IV}}
2^{((1-\lambda)\mu_*-\lambda\alpha_*)k_*t}\cdot
2^{-s_*(1-\lambda)m} +
$$
$$
+\sum \limits _{(t, \, m)\in {\rm V}} 2^{-\alpha_*k_*t}\cdot
2^{m(1/p_0-1/q)} \left(l_{t,m}^{-1/2}2^{\gamma_*k_*t/q}\cdot
2^{m/q}\right)^{\frac{1/p_0-1/q}{1/2-1/q}}=:S.
$$
Applying Lemma 6 from \cite{vas_bes}, (\ref{hat_mt}),
(\ref{line_mt}), (\ref{til_mt_t}), (\ref{1mt_pr}),
(\ref{1mt_pr1}), (\ref{hat_m}) and taking into account that in
${\rm I}\cup{\rm II}\cup{\rm III}\cup{\rm IV}$ there is a
decreasing geometric progression in $m$, and in ${\rm V}$ there is
an increasing geometric progression in $m$, we get that for
appropriate $t_1(n)$ and $m_1(n)$ the estimate
$S\underset{\mathfrak{Z}_0} {\lesssim} S_1(n) + S_2(n) + S_3(n)$
holds with
$$
S_1(n) = n^{-(s_*+1/q-1/p_1)}n^{1/q-1/p_1} = n^{-s_*},
$$
$$
S_2(n) = 2^{\mu_*k_*\tilde t(n)}\cdot
2^{\gamma_*(s_*+1/q-1/p_1)k_*\tilde t(n)}\cdot
n^{-s_*-1/q+1/p_1}\cdot n^{1/q-1/p_1} \stackrel{(\ref{til_theta}),
(\ref{tn})}{\underset{\mathfrak{Z}_0} {\lesssim}} n^{-\tilde
\theta},
$$
$$
S_3(n) =2^{-\alpha_*k_*\hat t(n)}\cdot
2^{\gamma_*(1/q-1/p_0)k_*\hat t(n)} \cdot
n^{(1/p_0-1/q)\frac{q}{2}} \stackrel{(\ref{til_theta}),
(\ref{tn_hat})}{\underset{\mathfrak{Z}_0}{\lesssim}} n^{-q\hat
\theta/2}.
$$

{\bf Case $p_0<2<q\le p_1$.} Let $\frac 1q=\frac{1-\lambda}{p_1} +
\frac{\lambda}{p_0}$, $\frac 12 = \frac{1-\tilde\lambda}{p_1}
+\frac{\tilde \lambda}{p_0}$. Then (\ref{wkm_lq}), (\ref{wkm_l2})
hold.

We define the numbers $m_t'$ by equation
\begin{align}
\label{ltill} \begin{array}{c} 2^{((1-\lambda)\mu_*-\lambda
\alpha_*)k_*t} \cdot
2^{-((1-\lambda)(s_*+1/q-1/p_1)+\lambda(1/q-1/p_0))m_t'} =
\\
=2^{((1-\tilde\lambda)\mu_*-\tilde\lambda \alpha_*)k_*t} \cdot
2^{-((1-\tilde\lambda)(s_*+1/q-1/p_1)+\tilde\lambda(1/q-1/p_0))m_t'}\cdot
n^{-\frac 12}\cdot 2^{\frac{m_t'}{q}}\cdot
2^{\frac{\gamma_*k_*t}{q}}.
\end{array}
\end{align}
Then
$$
2^{(\tilde \lambda-\lambda)(\mu_*+\alpha_*)k_*t}\cdot 2^{-(\tilde
\lambda-\lambda)(s_*+1/p_0-1/p_1)m_t'}= n^{-\frac 12}\cdot
2^{\frac{m_t'}{q}}\cdot 2^{\frac{\gamma_*k_*t}{q}};
$$
by (\ref{hat_mt}), (\ref{line_mt}), (\ref{1111}), (\ref{tn}),
(\ref{2222}), (\ref{tn_hat}), we have
\begin{align}
\label{2mpr} m'_{\tilde t(n)}=\hat m_{\tilde t(n)}=\tilde
m_{\tilde t(n)}, \quad m'_{\hat t(n)} = \overline{m}_{\hat t(n)}
=m _{\hat t(n)}.
\end{align}

We define the subsets ${\rm I}-{\rm IV}$ as in the previous case,
and set
$$
{\rm V}=\{(t, \, m):\; m\le m_t', \; m\ge m_t^*, \; m\ge m_t\},
$$
$$
{\rm VI} = \{(t, \, m):\; t\le \hat t(n), \; m\ge m_t^*, \; m\le
m_t\}.
$$

For $(t, \, m)\in {\rm I} \cup {\rm II}$ we use the inclusion
$W_{t,m} \subset 2^{\mu_*k_*t}\cdot 2^{-(s_*+1/q-1/p_1)m}
B_{p_1}^{\nu_{t,m}}$, for $(t, \, m)\in {\rm III}\cup {\rm IV}$ we
apply (\ref{wkm_lq}), for $(t, \, m)\in {\rm V}$ we apply
(\ref{wkm_l2}), in ${\rm VI}$ we apply the inclusion
$$W_{t,m} \subset 2^{-\alpha_*k_*t}\cdot 2^{-(1/q-1/p_0)m} B_{p_0}
^{\nu_{t,m}}.$$

We get
$$
S_0\underset{\mathfrak{Z}_0}{\lesssim} \sum \limits _{(t, \, m)\in
{\rm I}\cup {\rm II}} 2^{\mu_*k_*t}\cdot
2^{-m(s_*+1/q-1/p_1)}(2^{\gamma_*k_*t}\cdot 2^m)^{1/q-1/p_1}+
$$
$$
+ \sum \limits _{(t, \, m)\in {\rm III}\cup {\rm IV}}
2^{((1-\lambda)\mu_*-\lambda\alpha_*)k_*t}\cdot
2^{-s_*(1-\lambda)m} +
$$
$$
+\sum \limits _{(t, \, m)\in {\rm V}}
2^{((1-\tilde\lambda)\mu_*-\tilde\lambda\alpha_*)k_*t}\cdot
2^{-((1-\tilde\lambda)(s_*+1/q-1/p_1)+\tilde \lambda(1/q-1/p_0))m}
l_{t,m}^{-1/2}2^{\gamma_*k_*t/q}2^{m/q}+
$$
$$
+\sum \limits _{(t, \, m)\in {\rm VI}} 2^{-\alpha_*k_*t}\cdot
2^{m(1/p_0-1/q)} l_{t,m}^{-1/2}2^{\gamma_*k_*t/q}\cdot 2^{m/q}=:S.
$$

In the first and the second sums there is a decreasing geometric
progression in $m$, in the last sum there is an increasing
geometric progression in $m$. Applying Lemma 6 from \cite{vas_bes}
and taking into account (\ref{til_mt}), (\ref{mt}), (\ref{ltill}),
(\ref{2mpr}), we get that for appropriate $t_1(n)$ and $m_1(n)$
the estimate $S\underset{\mathfrak{Z}_0}{\lesssim} S_1(n)+S_2(n)
+S_3(n) + S_4(n)$ holds with
$$
S_1(n) = n^{-s_*-1/q+1/p_1}\cdot n^{1/q-1/p_1}= n^{-s_*},
$$
$$
S_2(n) = 2^{\mu_*k_*\tilde t(n)}\cdot
2^{\gamma_*(s_*+1/q-1/p_1)k_*\tilde t(n)}\cdot
n^{-s_*-1/q+1/p_1}\cdot n^{1/q-1/p_1} \stackrel{(\ref{til_theta}),
(\ref{tn})}{\underset{\mathfrak{Z}_0} {\lesssim}} n^{-\tilde
\theta},
$$
$$
S_3(n) =2^{-\alpha_*k_*\hat t(n)}\cdot 2^{-(1/p_0-1/q)\gamma_*k_*
\hat t(n)}\cdot n^{-\frac{q}{2}(1/q-1/p_0)} n^{-1/2}\cdot n^{\frac
q2\cdot \frac 1q} \stackrel{(\ref{til_theta}), (\ref{tn_hat})}
{\underset{\mathfrak{Z}_0} {\lesssim}} n^{-q\hat \theta/2},
$$
$$
S_4(n)=2^{-\alpha_*k_*t(n)}\cdot 2^{\gamma_*(1/q-1/p_0)k_*t(n)}
\cdot n^{1/p_0-1/q} n^{-1/2}\cdot n^{1/q}
\stackrel{(\ref{til_theta}), (\ref{tn1})}
{\underset{\mathfrak{Z}_0} {\lesssim}} n^{-\hat \theta +1/q-1/2}.
$$
This completes the proof.
\end{proof}

\section{Lower estimates for widths of $BX_{p_1}(\Omega)\cap BX_{p_0}(\Omega)$.}

Let $c\ge 1$, $t_0\in \Z_+$. Suppose that for all integers $t\ge
t_0$, $m\in \Z_+$ there are functions $\varphi_j^{t,m}\in
X_{p_0}(\Omega) \cap X_{p_1}(\Omega)$ ($1\le j\le \nu_{t,m}$) with
pairwise disjoint supports such that
\begin{align}
\label{nu_tm} \nu_{t,m} \ge c^{-1}2^{\gamma_*k_*t}\cdot 2^m
=:\nu'_{t,m},
\end{align}
\begin{align}
\label{func_est} \begin{array}{c} \|\varphi_j^{t,m}\|
_{Y_q(\Omega)} = 1, \quad \|\varphi _j^{t,m}\| _{X_{p_0}(\Omega)}
\le c\cdot 2^{\alpha_*k_*t}\cdot2^{m\left(\frac 1q
-\frac{1}{p_0}\right)}, \\ \|\varphi _j^{t,m}\| _{X_{p_1}(\Omega)}
\le c\cdot 2^{-\mu_*k_*t}\cdot 2^{m\left(s_*+1/q-1/p_1\right)}.
\end{array}
\end{align}
We denote $\mathfrak{Z}_1 = (c, \, t_0, \, q, \, p_0, \, p_1, \,
s_*, \gamma_*, \, \alpha_*, \, \mu_*)$. The numbers $\tilde
\theta$ and $\hat \theta$ are defined by (\ref{til_theta}).
\begin{Trm}
\label{low_est} Let (\ref{s1qp}), (\ref{mua}), (\ref{nu_tm}),
(\ref{func_est}) hold. Then
\begin{align}
\label{low_est1} d_n(BX_{p_0}(\Omega)\cap BX _{p_1}(\Omega), \,
Y_q(\Omega)) \underset{\mathfrak{Z}_1}{\gtrsim} n^{-s_*-1/q+1/p_1}
d_n(B_{p_1}^{2n}, \, l_q^{2n}),
\end{align}
\begin{align}
\label{low_est2} d_n(BX_{p_0}(\Omega)\cap BX _{p_1}(\Omega), \,
Y_q(\Omega)) \underset{\mathfrak{Z}_1}{\gtrsim} n^{-\tilde
\theta},
\end{align}
\begin{align}
\label{low_est3} d_n(BX_{p_0}(\Omega)\cap BX _{p_1}(\Omega), \,
Y_q(\Omega)) \underset{\mathfrak{Z}_1}{\gtrsim} n^{-\hat
\theta-(1/2-1/q)_+};
\end{align}
if $q>2$, $p_1<q$, then
\begin{align}
\label{low_est4} d_n(BX_{p_0}(\Omega)\cap BX _{p_1}(\Omega), \,
Y_q(\Omega)) \underset{\mathfrak{Z}_1}{\gtrsim}
n^{-q(s_*+1/q-1/p_1)/2};
\end{align}
if $q>2$, then
\begin{align}
\label{low_est5} d_n(BX_{p_0}(\Omega)\cap BX _{p_1}(\Omega), \,
Y_q(\Omega)) \underset{\mathfrak{Z}_1}{\gtrsim} n^{-q\hat
\theta/2}.
\end{align}
\end{Trm}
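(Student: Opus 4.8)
The plan is to transfer the lower bound to a finite-dimensional problem: for each scale $(t,m)$ the $\nu_{t,m}$ disjointly supported functions $\varphi_j^{t,m}$ span an isometric copy of $l_q^{\nu_{t,m}}$ inside $Y_q(\Omega)$, on which the class $BX_{p_0}(\Omega)\cap BX_{p_1}(\Omega)$ contains a scaled intersection of an $l_{p_0}$- and an $l_{p_1}$-ball, after which each of (\ref{low_est1})--(\ref{low_est5}) follows by inscribing a suitable ball and invoking Theorem \ref{gl_teor} or (\ref{pietsch_stesin}).

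First I would fix $t$, $m$ and consider $\Phi_{t,m}:\R^{\nu_{t,m}}\to Y_q(\Omega)$, $\Phi_{t,m}(c)=\sum_{j}c_j\varphi_j^{t,m}$. Since the supports are pairwise disjoint and $\|\varphi_j^{t,m}\|_{Y_q(\Omega)}=1$, property 3 gives $\|\Phi_{t,m}(c)\|_{Y_q(\Omega)}=\|c\|_{l_q^{\nu_{t,m}}}$, while the two bounds in (\ref{func_est}) together with the $l_{p_i}$-decomposition yield $\|\Phi_{t,m}(c)\|_{X_{p_0}(\Omega)}\le c\,2^{\alpha_*k_*t}2^{m(1/q-1/p_0)}\|c\|_{l_{p_0}}$ and $\|\Phi_{t,m}(c)\|_{X_{p_1}(\Omega)}\le c\,2^{-\mu_*k_*t}2^{m(s_*+1/q-1/p_1)}\|c\|_{l_{p_1}}$. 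Hence $\Phi_{t,m}$ carries $V_{t,m}:=\kappa_0 B_{p_0}^{\nu_{t,m}}\cap\kappa_1 B_{p_1}^{\nu_{t,m}}$, with $\kappa_0\asymp 2^{-\alpha_*k_*t}2^{m(1/p_0-1/q)}$ and $\kappa_1\asymp 2^{\mu_*k_*t}2^{-m(s_*+1/q-1/p_1)}$, into the class. A norm-one projection of $Y_q(\Omega)$ onto $\mathrm{span}\{\varphi_j^{t,m}\}$ — built from Hahn--Banach functionals normalized at each $\varphi_j^{t,m}$ together with property 3 — combined with monotonicity of widths gives the transfer inequality $d_n(BX_{p_0}(\Omega)\cap BX_{p_1}(\Omega),\,Y_q(\Omega))\gtrsim d_n(V_{t,m},\,l_q^{\nu_{t,m}})$ for every admissible $(t,m)$ (by (\ref{nu_tm}) the dimension is $\nu_{t,m}\gtrsim 2^{\gamma_*k_*t}2^m$).

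It then remains to choose $(t,m)$ and a sub-dimension $N$ and to inscribe one ball in $V_{t,m}$. For (\ref{low_est1}) I take $t=t_0$, $2^m\asymp N\asymp 2n$; here (\ref{s1qp}) makes the $l_{p_0}$-constraint redundant on $\kappa_1 B_{p_1}^{N}$, so $V_{t_0,m}\supset\kappa_1 B_{p_1}^{N}$ with $\kappa_1\asymp n^{-(s_*+1/q-1/p_1)}$, giving $n^{-s_*-1/q+1/p_1}d_n(B_{p_1}^{2n},l_q^{2n})$. For (\ref{low_est2}) the ball $\rho_\infty B_\infty^{N}\subset V_{t,m}$ with $\rho_\infty=\min_i\kappa_i N^{-1/p_i}$ always fits; taking $N\asymp 2n$ and (\ref{pietsch_stesin}) gives $d_n\gtrsim\rho_\infty(N-n)^{1/q}\asymp\min_i\kappa_i\, n^{1/q-1/p_i}$, which is maximized at the balancing scale $t=\tilde t(n)$ of (\ref{tn}), where it equals $n^{-\tilde\theta}$ by (\ref{til_theta}). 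The bounds (\ref{low_est3}) and (\ref{low_est5}) come from inscribing the Euclidean ball $\rho_2 B_2^{N}$ and applying Gluskin's estimate in Theorem \ref{gl_teor}: at the balancing scales $t=t(n)$ and $t=\hat t(n)$ (from (\ref{tn1}), (\ref{tn_hat})) the radii $\kappa_0\asymp\kappa_1$ coincide and equal $n^{-\hat\theta}$ and $n^{-q\hat\theta/2}$ respectively, with $N\asymp 2n$ in the first case (so that $d_n(B_2^{N},l_q^{N})\asymp n^{-(1/2-1/q)_+}$) and $N\asymp n^{q/2}$ in the second (so that $d_n(B_2^{N},l_q^{N})\asymp 1$). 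Finally (\ref{low_est4}) is obtained as in (\ref{low_est1}) but with the larger sub-dimension $N\asymp n^{q/2}$, so that Gluskin's estimate returns $d_n(B_{p_1}^{N},l_q^{N})\asymp 1$ and the radius $\kappa_1\asymp N^{-(s_*+1/q-1/p_1)}$ gives $n^{-q(s_*+1/q-1/p_1)/2}$.

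The principal obstacle is this last step. For each rate one must exhibit an admissible $(t,m,N)$ for which the dimension bound $\nu_{t,m}\ge N$ holds — this is exactly where positivity of $\tilde\theta$, $\hat\theta$ and the hypotheses (\ref{s1qp}), (\ref{mua}) are used, to guarantee that the balancing scales $\tilde t(n)$, $t(n)$, $\hat t(n)$ are nonnegative and that $2^{\gamma_*k_*t}2^m$ reaches the required order — and one must select, case by case according to the position of $p_0,p_1$ relative to $2$ and to $q$, the correct inscribed ball among $B_\infty$, $B_2$, $B_{p_0}$, $B_{p_1}$ and verify that the chosen $\rho B_{\tilde p}^{N}$ indeed lies in the intersection $V_{t,m}$ (for $B_{p_1}$ this is the redundancy of the second constraint via (\ref{s1qp})). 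The remaining verification, that the product of the inscribed radius and the finite-dimensional width equals $n^{-\theta_{j_*}}$ with the claimed exponent, is a routine computation from the definitions (\ref{til_theta}) and (\ref{tn})--(\ref{tn_hat}).
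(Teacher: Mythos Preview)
Your overall strategy is exactly the paper's: use the disjointly supported $\varphi_j^{t,m}$ to identify an isometric copy of $l_q^{\nu_{t,m}}$, project with norm one, obtain $d_n(BX_{p_0}\cap BX_{p_1},Y_q)\gtrsim d_n(V_{t,m},l_q^{\nu_{t,m}})$ with $V_{t,m}=\kappa_0B_{p_0}^{\nu_{t,m}}\cap\kappa_1B_{p_1}^{\nu_{t,m}}$, and then inscribe a single ball and apply Theorem~\ref{gl_teor} or (\ref{pietsch_stesin}). The treatments of (\ref{low_est1}), (\ref{low_est4}) via $B_{p_1}$ and of (\ref{low_est2}) via $B_\infty$ are correct and coincide with the paper's.

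There is, however, a genuine gap in your plan for (\ref{low_est3}) and (\ref{low_est5}). You propose to inscribe the Euclidean ball $\rho_2 B_2^{N}$ and then assert that at the balancing scale $\kappa_0\asymp\kappa_1$ one has $\rho_2\asymp\kappa_0$. But $\rho_2 B_2^{N}\subset\kappa_i B_{p_i}^{N}$ forces $\rho_2\le\kappa_i N^{-(1/p_i-1/2)_+}$, so when $\min(p_0,p_1)<2$ the inscribed radius is strictly smaller than $\kappa_0$ by a power of $N\asymp n$ (or $n^{q/2}$), and the resulting lower bound falls short of $n^{-\hat\theta-(1/2-1/q)_+}$ (respectively $n^{-q\hat\theta/2}$). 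Your list of candidate balls $B_\infty,\,B_2,\,B_{p_0},\,B_{p_1}$ is missing the one that works uniformly.

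The paper's fix is to inscribe $B_1$ instead of $B_2$: since $B_1^{N}\subset B_{p_i}^{N}$ for every $p_i\ge 1$ with no dimensional loss, one always has $\min(\kappa_0,\kappa_1)\,B_1^{\nu_{t,m}}\subset V_{t,m}$. At the scale where $\kappa_0=\kappa_1$ (equation (\ref{2akt}) in the paper, which is your $t(n)$, $\hat t(n)$), Gluskin's theorem with $\lambda_{1,q}=1$ for $q>2$ gives $d_n(B_1^{N},l_q^{N})\asymp n^{-1/2}N^{1/q}$, hence $n^{1/q-1/2}$ for $N\asymp n$ and $\asymp 1$ for $N\asymp n^{q/2}$; for $q\le 2$ one has $d_n(B_1^{N},l_q^{N})\asymp 1$. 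This yields (\ref{low_est3}) and (\ref{low_est5}) in all ranges of $p_0,p_1$. Replacing $B_2$ by $B_1$ in your argument closes the gap; the rest of your proposal is sound.
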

\begin{proof}
Let $\nu'_{t,m}\ge 2n$, $L= {\rm span}\, \{\varphi_1^{t,m}, \,
\dots, \, \varphi_{\nu_{t,m}}^{t,m}\}$, and let $W_{t,m}$ be the
set of sequences $(c_1, \, \dots, \, c_{\nu_{t,m}})\in
\R^{\nu_{t,m}}$ such that
$$
\|(c_j)_{j=1}^{\nu_{t,m}}\|_{l_{p_0}^{\nu _{t,m}}} \le
2^{-\alpha_*k_*t}\cdot 2^{m(1/p_0-1/q)}, \quad
\|(c_j)_{j=1}^{\nu_{t,m}}\|_{l_{p_1}^{\nu _{t,m}}} \le
2^{\mu_*k_*t}\cdot 2^{-m(s_*+1/q-1/p_1)},
$$
\begin{align}
\label{m_set} M= \left\{\sum \limits _{j=1}^{\nu_{t,m}}
c_j\varphi_j^{t,m}:\; (c_j)_{j=1}^{\nu_{t,m}}\in W_{t,m}\right\}.
\end{align}

Since the functions $\varphi_j^{t,m}$ have pairwise disjoint
supports, there is a linear projection from $Y_q(\Omega)$ onto $L$
with unit norm. It follows from the properties of the Kolmogorov
widths that
$$
d_n(BX_{p_0}(\Omega)\cap BX_{p_1}(\Omega), \, Y_q(\Omega))
\stackrel{(\ref{func_est}),(\ref{m_set})}{\underset{\mathfrak{Z}_1}{\gtrsim}}
d_n(M, \, Y_q(\Omega)) = $$$$=d_n(M, \, L)
\stackrel{(\ref{func_est}),(\ref{m_set})} {=} d_n(W_{t,m}, \,
l_q^{\nu_{t,m}}).
$$

Let $t=t_0$. By (\ref{s1qp}), there is $\tilde c=\tilde
c(\mathfrak{Z}_1)$ such that for sufficiently large $n$ and for
$\nu'_{t_0,m}\ge 2n$ the inclusion $W_{t_0,m} \supset \tilde c
\cdot 2^{-m(s_*+1/q-1/p_1)} B_{p_1}^{\nu_{t_0,m}}$ holds. We take
$m$ such that $2n\le \nu'_{t_0,m}
\underset{\mathfrak{Z}_1}{\lesssim} n$ or $2n^{q/2}\le
\nu'_{t_0,m} \underset{\mathfrak{Z}_1}{\lesssim} n^{q/2}$, apply
(\ref{nu_tm}) and Theorem \ref{gl_teor}, and obtain
(\ref{low_est1}) and (\ref{low_est4}).

The set $\min \{2^{-\alpha_*k_*t}\cdot 2^{m(1/p_0-1/q)}, \,
2^{\mu_*k_*t}\cdot 2^{-m(s_*+1/q-1/p_1)}\} B_1^{\nu_{t,m}}$ is
contained in $W_{t,m}$. Let $m_t$ be defined by equation
\begin{align}
\label{2akt} 2^{-\alpha_*k_*t}\cdot 2^{m_t(1/p_0-1/q)} =
2^{\mu_*k_*t}\cdot 2^{-m_t(s_*+1/q-1/p_1)}.
\end{align}
Then
$$
d_n(W_{t,[m_t]}, \, l_q^{\nu_{t,[m_t]}})
\underset{\mathfrak{Z}_1}{\gtrsim} 2^{\mu_*k_*t}\cdot
2^{-m_t(s_*+1/q-1/p_1)} d_n(B_1^{\nu_{t,[m_t]}}, \,
l_q^{\nu_{t,[m_t]}}).
$$
We take $t(n)$ and $\hat t(n)$ such that $2n\le
\nu'_{t(n),[m_{t(n)}]} \underset{\mathfrak{Z}_1}{\lesssim} n$ and
$2n^{q/2}\le \nu'_{\hat t(n),[m_{\hat t(n)}]}
\underset{\mathfrak{Z}_1}{\lesssim} n^{q/2}$. Applying
(\ref{nu_tm}) and (\ref{2akt}), we get (\ref{low_est3}) and
(\ref{low_est5}).

The set $\min \{2^{-\alpha_*k_*t}\cdot 2^{-\gamma_*k_*t/p_0}\cdot
2^{-m/q}, \, 2^{\mu_*k_*t}\cdot 2^{-\gamma_*k_*t/p_1}\cdot
2^{-m(s_*+1/q)}\} B_\infty^{\nu_{t,m}}$ is contained in $W_{t,m}$.
Let
\begin{align}
\label{akt2} 2^{-\alpha_*k_*t}\cdot 2^{-\gamma_*k_*t/p_0}\cdot
2^{-\tilde m_t/q} = 2^{\mu_*k_*t}\cdot2^{-\gamma_*k_*t/p_1}\cdot
2^{-\tilde m_t(s_*+1/q)}.
\end{align}
Then
$$
d_n(W_{t,[\tilde m_t]}, \, l_q^{\nu_{t,[\tilde m_t]}})
\underset{\mathfrak{Z}_1}{\gtrsim} 2^{-\alpha_*k_*t}\cdot
2^{-\gamma_*k_*t/p_0}\cdot 2^{-\tilde m_t/q}
 d_n(B_\infty^{\nu_{t,[\tilde m_t]}}, \,
l_q^{\nu_{t,[\tilde m_t]}})
\stackrel{(\ref{nu_tm})}{\underset{\mathfrak{Z}_1}{\gtrsim}}
$$
$$
\gtrsim 2^{-(\alpha_*+\gamma_*/p_0-\gamma_*/q)k_*t}.
$$
We take $\tilde t(n)$ such that $2n\le \nu'_{\tilde t(n),[\tilde
m_{\tilde t(n)}]} \underset{\mathfrak{Z}_1}{\lesssim} n$. Applying
(\ref{nu_tm}) and (\ref{akt2}), we get (\ref{low_est2}).
\end{proof}

\section{Upper estimates for widths of weighted Sobolev classes}

As $X_{p_1}(\Omega)$ we take the space ${\cal
W}^r_{p_1,g}(\Omega)$ with seminorm $\|f\|_{{\cal
W}^r_{p_1,g}(\Omega)} = \left\|\frac{\nabla^r f}{g}\right\|
_{L_{p_1}(\Omega)}$, as $X_{p_0}(\Omega)$ we take
$L_{p_0,w}(\Omega)$ with norm
$\|f\|_{L_{p_0,w}(\Omega)}=\|wf\|_{L_{p_0}(\Omega)}$, as
$Y_q(\Omega)$ we take $L_{q,v}(\Omega)$ with norm
$\|f\|_{L_{q,v}(\Omega)}=\|vf\|_{L_q(\Omega)}$, as ${\cal
P}(\Omega)$ we take the space ${\cal P}_{r-1}(\Omega)$ of
polynomials of degree at most $r-1$.

First we consider the function classes from Theorems \ref{main1},
\ref{main2}.

Let $\Omega\in {\bf FC}(a)$, and let $\Gamma \subset \partial
\Omega$ be an $h$-set. In \cite{vas_vl_raspr},
\cite{vas_vl_raspr2} the numbers $\overline{s}=\overline{s}(a, \,
d)\in \N$ and $b_*=b_*(a, \, d)>0$ were defined and the partition
of the domain $\Omega$ into subdomains $\Omega [\eta_{j,i}]\in
{\bf FC}(b_*)$ ($j\ge j_{\min}$, $i\in \tilde I_j$) was
constructed, such that
\begin{align}
\label{diam_om_ji} {\rm diam}\, \Omega [\eta_{j,i}]
\underset{a,\,d}{\asymp} 2^{-\overline{s}j}; \quad {\rm dist}\,
(x, \, \Gamma) \underset{a,\,d}{\asymp} 2^{-\overline{s}j}, \quad
x\in \Omega [\eta_{j,i}]; \quad {\rm card}\, \tilde I_j
\underset{a,\, d, \, c_*}{\lesssim}
\frac{h(2^{-\overline{s}j_{\min}})} {h(2^{-\overline{s}{j}})}.
\end{align}

{\bf Upper estimate in Theorem \ref{main1}.} We set $\hat J_t =
\tilde I_t$, $\Omega _{t,i}= \Omega[\eta_{t,i}]$. Then Assumption
\ref{supp1} follows from the Sobolev embedding theorem
\cite{resh1}, \cite{resh2} (since $r +\frac dq-\frac{d}{p_1}>0$
and $\Omega _{t,i}\in {\bf FC}(b_*)$). Assumption \ref{supp2} with
$\gamma_* = \theta$, $k_*=\overline{s}$ follows from
(\ref{h_theta}) and (\ref{diam_om_ji}). From \cite[Lemma
8]{vas_width_raspr} we get Assumption \ref{supp3}; the same lemma
together with (\ref{gw}), (\ref{diam_om_ji}) yield (\ref{fpef})
with $\mu_* = \beta+\lambda -r -\frac{d}{q} +\frac{d}{p_1}$.
Assumption \ref{supp4} with $\alpha_*=\sigma-\lambda +\frac dq
-\frac{d}{p_0}$ follows from H\"{o}lder's inequality, (\ref{gw})
and (\ref{diam_om_ji}). It remains to check (\ref{pef}). Notice
that the elements of the partition $T_{t,j,m}$ belong to ${\bf
FC}(b_*)$; it follows from \cite[Lemma 7]{vas_john} (Lemma 8 from
\cite{vas_width_raspr} is the corollary of Lemma 7 from
\cite{vas_john}). Hence, there are concentric balls $B_E\subset
\Omega _{t,i} \subset \tilde B_E$ of radii $R_E
\underset{a,d}{\asymp} 2^{-\overline{s}t}\cdot 2^{-\frac md}$ and
$\tilde R_E \underset{a,d}{\asymp} 2^{-\overline{s}t}\cdot
2^{-\frac md}$, respectively. The operator $P_E$ is defined as
follows: first the orthogonal projection in $L_2(B_E)$ onto ${\cal
P}_{r-1}(B_E)$ is constructed, then the polynomials are extended
onto $E$. We have
$$
\|P_Ef\|_{L_q(E)} \le \|P_Ef\|_{L_q(\tilde B_E)}
\underset{a,d,q,r}{\lesssim} \|P_Ef\|_{L_q(B_E)}
\underset{a,q,d,r}{\lesssim} 2^{(1-1/q)(\overline{s}td+m)}
\|P_Ef\|_{L_1(B_E)} \underset{d,r}{\lesssim}
$$
$$
\le 2^{(1-1/q)(\overline{s}td+m)} \|f\|_{L_1(B_E)}
\underset{a,d,p_0}{\lesssim}
2^{(1/p_0-1/q)(\overline{s}td+m)}\|f\|_{L_{p_0}(B_E)}\le
2^{(1/p_0-1/q)(\overline{s}td+m)}\|f\|_{L_{p_0}(E)}.
$$
We apply (\ref{gw}) together with (\ref{diam_om_ji}) and obtain
(\ref{pef}).

{\bf Upper estimate in Theorem \ref{main2}.} We set
$\{\Omega_{t,i}\} _{i\in \hat J_t} = \{\Omega [\eta_{j,i}]\}
_{2^t\le \overline{s}j<2^{t+1},i\in \tilde I_j}$. Arguing as in
the proof of Theorem \ref{main1} and applying (\ref{gvw}),
(\ref{h_gam}), (\ref{gw_1}), (\ref{lim_case}), (\ref{diam_om_ji}),
we get Assumptions \ref{supp1}--\ref{supp5} with $k_*=1$,
$\gamma_*=\gamma+1$, $\alpha_*=\alpha-\nu$, $\mu_*=\mu+\nu$.

{\bf Upper estimate in Theorem \ref{main3}.} We set $\Omega_0 =
(-1, \, 1)^d$; for $t\in \N$ we write $\Omega _t =(-2^t, \, 2^t)^d
\backslash [-2^{t-1}, \, 2^{t-1}]^d$; let $\hat J_t=\{1\}$. As in
the proofs of the previous theorems, we get by (\ref{gw_2})
Assumptions \ref{supp1}--\ref{supp5} with $\gamma_*=0$,
$\mu_*=\beta+\lambda+r+\frac dq-\frac{d}{p_1}$,
$\alpha_*=\sigma-\lambda+\frac{d}{p_0}-\frac dq$.

\section{Lower estimates for widths of weighted Sobolev classes}

Let $\psi \in C_0^\infty(\R^d)$, $\|\psi\|_{L_q(\R^d)}=1$, ${\rm
supp}\, \psi \subset [0, \, 1]^d$.

In the first and the second examples the functions $g$, $v$, $w$
have the form $g(x)=\varphi_g({\rm dist}\, (x, \, \Gamma))$, $w(x)
=\varphi_w({\rm dist}\, (x, \, \Gamma))$, $v(x) =\varphi_v({\rm
dist}\, (x, \, \Gamma))$.

In \cite[p. 118]{vas_vl_raspr2} the number
$k_{**}=k_{**}(\mathfrak{Z})$ was defined and the family of cubes
$\{\Delta_{j,i}\}_{i\in I_j}$ was constructed, such that
$\Delta_{j,i}\subset \Omega$, ${\rm mes}\, \Delta_{j,i}
\underset{a,d,c_*}{\asymp} 2^{-dk_{**}j}$,
\begin{align}
\label{ij} {\rm card}\, I_j \underset{a, \, d, \, c_*}{\gtrsim}
\frac{h(2^{-k_{**}j_{\min}})}{h(2^{-k_{**}j})}
\end{align}
(here the number $j_{\min}\in \Z_+$ depends on ${\rm
diam}\,\Omega$, $c_*$ is from Definition \ref{h_set}),
\begin{align}
\label{dist} {\rm dist}\, (x, \, \Gamma) \underset{a, \, d, \,
c_*}{\asymp} 2^{-k_{**}j}, \quad x\in \Delta _{j,i}.
\end{align}
For each $m\in \Z_+$ we take a partition of $\Delta_{j,i}$ into
cubes $\Delta_{j,i,l} = x_{j,i,l} + \rho _{j,i,m}[0, \, 1]^d$,
$1\le l\le l_m$,
\begin{align}
\label{ltm} l_m \underset{d}{\asymp} 2^m, \quad \rho _{j,i,m}
\underset{a,d,c_*}{\asymp} 2^{-k_{**}j}\cdot 2^{-m/d}.
\end{align}

Let $\psi _{j,i,l}(x) = c_{j,i,l}\psi\left(\frac{x-x_{j,i,l}}
{\rho_{j,i,m}}\right)$, where $c_{j,i,l}>0$ is such that
\begin{align}
\label{psi_jil} \|\psi _{j,i,l}\|_{L_{q,v}(\Delta_{j,i,l})} = 1.
\end{align}
Then by (\ref{dist}), (\ref{ltm}) we have
\begin{align}
\label{jm1} \|\psi_{j,i,l}\| _{{\cal W}^r_{p_1,g}(\Delta_{j,i,l})}
\underset{\mathfrak{Z}_*}{\asymp} 2^{(r +d/q-d/p_1)k_{**}j}\cdot
2^{m(r/d +1/q-1/p_1)}\cdot
\frac{1}{\varphi_g(2^{-k_{**}j})\varphi_v(2^{-k_{**}j})},
\end{align}
\begin{align}
\label{jm0} \|\psi_{j,i,l}\| _{L_{p_0,w}(\Delta_{j,i,l})}
\underset{\mathfrak{Z}_*}{\asymp} 2^{(d/q-d/p_0)k_{**}j} \cdot
2^{m(1/q-1/p_0)}\cdot \frac{\varphi_w(2^{-k_{**}j})}
{\varphi_v(2^{-k_{**}j})}.
\end{align}

{\bf Lower estimate in Theorem \ref{main1}.} We take
$\{\varphi^{t,m}_\nu\} _{1\le \nu\le \nu_{t,m}} =\{\psi
_{t,i,l}\}_{i\in I_t, \, 1\le l\le l_m}$. By (\ref{h_theta}),
(\ref{ij}) and (\ref{ltm}), $\nu_{t,m}
\underset{\mathfrak{Z}_*}{\gtrsim} 2^{\theta k_{**}t}\cdot 2^m$.
Hence, (\ref{nu_tm}) holds with $\gamma_*=\theta$, $k_*=k_{**}$.
By (\ref{gw}), (\ref{psi_jil}), (\ref{jm1}) and (\ref{jm0}), we
get (\ref{func_est}) with $\alpha_* = \sigma-\lambda +\frac dq
-\frac{d}{p_0}$, $\mu_*=\beta +\lambda - r -\frac dq+
\frac{d}{p_1}$, $s_*=\frac rd$. It remains to apply Theorem
\ref{low_est}.

{\bf Lower estimate in Theorem \ref{main2}.} We take
$\{\varphi^{t,m}_\nu\} _{1\le \nu\le \nu_{t,m}} =\{\psi
_{j,i,l}\}_{2^t\le j< 2^{t+1}, \, i\in I_j, \, 1\le l\le l_m}$. By
(\ref{h_gam}), (\ref{ij}) and (\ref{ltm}), $\nu_{t,m}
\underset{\mathfrak{Z}_*}{\gtrsim} 2^{(\gamma +1)t}\cdot 2^m$.
Hence, (\ref{nu_tm}) holds with $\gamma_*=\gamma+1$, $k_*=1$. By
(\ref{gw_1}), (\ref{lim_case}), (\ref{psi_jil}), (\ref{jm1}) and
(\ref{jm0}), we have (\ref{func_est}) with $\alpha_* =
\alpha-\nu$, $\mu_*=\mu+\nu$, $s_*=\frac rd$.

{\bf Lower estimate in Theorem \ref{main3}.} We set $\Omega_0 =
(0, \, 1)^d$, $$\Omega _t =(2^{t-1}, \, 2^t)^d, \quad t\in \N;$$
let $\{\Delta _{t,l}^m\}_{1\le l\le 2^{[m/d]}}$ be the partition
of $\Omega_t$ into cubes with side length $2^{t-1+[m/d]}$,
$\Delta_{t,l}^m = x_{t,l}^m + \rho _{t}^m(0, \, 1)^d$, $\psi
_{t,l}^m(x)=c_{t,l}^m\psi\left(\frac{x-x_{t,l}^m}
{\rho_{t}^m}\right)$, where $c_{t,l}^m>0$ is such that
$\|\psi_{t,l}^m\|_{L_{q,v}(\Delta_{t,l}^m)}=1$. Applying
(\ref{gw_2}), we get (\ref{nu_tm}) with $\gamma_*=0$, $k_*=1$ and
(\ref{func_est}) with $\mu_* =
\beta+\lambda+r+\frac{d}{q}-\frac{d}{p_1}$,
$\alpha_*=\sigma-\lambda+\frac{d}{p_0}-\frac dq$, $s_*=\frac rd$.

\begin{Biblio}
\bibitem{ait_kus1} M.S.~Aitenova, L.K.~Kusainova, ``On the asymptotics of the distribution of approximation
numbers of embeddings of weighted Sobolev classes. I'', {\it Mat.
Zh.} {\bf 2}:1 (2002), 3--9.

\bibitem{ait_kus2} M.S.~Aitenova, L.K.~Kusainova, ``On the asymptotics of the distribution of approximation
numbers of embeddings of weighted Sobolev classes. II'', {\it Mat.
Zh.} {\bf 2}:2 (2002), 7--14.

\bibitem{besov} O.V.~Besov, ``Sobolev's embedding theorem for a domain with irregular
boundary'', {\it Sb. Math.} {\bf 192}:3 (2001), 323--346.

\bibitem{boy_1} I.V.~Boykov, ``Approximation of some classes
of functions by local splines'', {\it Comput. Math. Math. Phys.}
{\bf 38}:1 (1998), 21--29.

\bibitem{m_bricchi1} M. Bricchi, ``Existence and properties of
$h$-sets'', {\it Georgian Mathematical Journal}, {\bf 9}:1 (2002),
13–-32.

\bibitem{caso_ambr}  L. Caso, R. D'Ambrosio, ``Weighted spaces and weighted norm inequalities on irregular
domains'', {\it J. Appr. Theory}, {\bf 167} (2013), 42--58.

\bibitem{edm_lang} D.\,E.~Edmunds, J.~Lang,
``Approximation numbers and Kolmogorov widths of Hardy-type
operators in a non-homogeneous case'', {\it Math. Nachr.} {\bf
297}:7 (2006), 727--742.

\bibitem{el_kolli} A. El Kolli, ``n-i\`{e}me \'{e}paisseur dans les espaces de Sobolev'',
{\it J. Approx. Theory}, {\bf 10} (1974), 268--294.

\bibitem{galeev1} E.M.~Galeev, ``The Kolmogorov diameter of the intersection of classes of periodic
functions and of finite-dimensional sets'', {\it Math. Notes},
{\bf 29}:5 (1981), 382--388.

\bibitem{bib_gluskin} E.D. Gluskin, ``Norms of random matrices and diameters
of finite-dimensional sets'', {\it Math. USSR-Sb.}, {\bf 48}:1
(1984), 173--182.

\bibitem{konov_leviat} V.\,N.~Konovalov, D.~Leviatan,
``Kolmogorov and linear widths of weighted Sobolev-type classes on
a finite interval''. {\it Anal. Math.} {\bf 28}:4 (2002),
251--278.

\bibitem{kufner} A. Kufner, {\it Weighted Sobolev spaces}. Teubner-Texte Math., 31.
Leipzig: Teubner, 1980.

\bibitem{kus1} L.K. Kusainova, ``Embedding the weighted Sobolev space $W^l_p(\Omega; v)$ in the space $L_p(\Omega; \omega)$''.
{\it Sb. Math.} {\bf 191}:2 (2000), 275--290.

\bibitem{lang_j_at1} J.~Lang, ``Improved estimates for the
approximation numbers of Hardy-type operators''. {\it J. Appr.
Theory}, {\bf 121}:1 (2003), 61--70.

\bibitem{lif_linde} M.\,A.~Lifshits, W.~Linde,
``Approximation and entropy numbers of Volterra operators with
application to Brownian motion'', {\it Mem. Amer. Math. Soc.} {\bf
157}, issue 745.

\bibitem{lo1} P.I. Lizorkin, M. Otelbaev, ``Imbedding theorems and compactness
for spaces of Sobolev type with weights'', {\it Math. USSR-Sb.}
{\bf 36}:3 (1980), 331--349.

\bibitem{lo2} P.I. Lizorkin, M. Otelbaev, ``Imbedding theorems and
compactness for spaces of Sobolev type with weights. II'', {\it
Math. USSR-Sb.} {\bf 40}:1 (1981), 51--77.

\bibitem{lo3} P.I. Lizorkin, M. Otelbaev, ``Estimates of approximate numbers
of the imbedding operators for spaces of Sobolev type with
weights'', {\it Proc. Steklov Inst. Math.}, {\bf 170} (1987),
245--266.

\bibitem{step_lom} E.N. Lomakina, V.D. Stepanov, ``Asymptotic Estimates for the Approximation
and Entropy Numbers of a One-Weight Riemann–Liouville Operator'',
{\it Siberian Adv. Math.}, {\bf 17}:1 (2007), 1--36.

\bibitem{mieth1} T. Mieth, ``Entropy and approximation numbers of embeddings of
weighted Sobolev spaces'', {\it J. Appr. Theory}, {\bf 192}
(2015), 250--272.

\bibitem{mieth2} T. Mieth, ``Entropy and approximation numbers of weighted
Sobolev spaces via bracketing'', {\it J. Funct. Anal.} {\bf 270}
(2016), 4322--4339.

\bibitem{myn_otel} K.~Mynbaev, M.~Otelbaev, {\it Weighted function spaces and the
spectrum of differential operators.} Nauka, Moscow, 1988.

\bibitem{r_oinarov} R. Oinarov, ``On weighted norm inequalities with three
weights''. {\it J. London Math. Soc.} (2), {\bf 48} (1993),
103--116.

\bibitem{pietsch1} A. Pietsch, ``$s$-numbers of operators in Banach space'', {\it Studia Math.},
{\bf 51} (1974), 201--223.

\bibitem{resh1} Yu.G. Reshetnyak, ``Integral representations of
differentiable functions in domains with a nonsmooth boundary'',
{\it Sibirsk. Mat. Zh.}, {\bf 21}:6 (1980), 108--116 (in Russian).

\bibitem{resh2} Yu.G. Reshetnyak, ``A remark on integral representations
of differentiable functions of several variables'', {\it Sibirsk.
Mat. Zh.}, {\bf 25}:5 (1984), 198--200 (in Russian).

\bibitem{st_ush} V.D. Stepanov, E.P. Ushakova, ``On Integral Operators with Variable Limits of Integration'',
{\it Proc. Steklov Inst. Math.}, {\bf 232} (2001), 290--309.

\bibitem{stepanov2} V. D. Stepanov, ``Two-weighted estimates of Riemann–Liouville integrals'', {\it Math. USSR-Izv.}, {\bf 36}:3 (1991),
669--681.

\bibitem{stepanov1} V.D. Stepanov, ``Weighted norm inequalities for integral operators and related topics''.
Nonlinear analysis, function spaces and applications, Vol. 5
(Prague, 1994), 139–175, Prometheus, Prague, 1994.

\bibitem{stesin} M.I. Stesin, ``Aleksandrov diameters of finite-dimensional sets
and of classes of smooth functions'', {\it Dokl. Akad. Nauk SSSR},
{\bf 220}:6 (1975), 1278--1281 [Soviet Math. Dokl.].

\bibitem{trieb_mat_sb} H. Triebel, ``Interpolation properties of $\varepsilon$-entropy and diameters.
Geometric characteristics of imbedding for function spaces of
Sobolev–Besov type'', {\it Math. USSR-Sb.}, {\bf 27}:1 (1975),
23--37.

\bibitem{triebel} H. Triebel, {\it Interpolation theory. Function spaces. Differential
operators}. Mir, Moscow, 1980.

\bibitem{triebel12} H. Triebel, ``Entropy and approximation numbers of limiting embeddings, an approach via Hardy
inequalities and quadratic forms'', {\it J. Approx. Theory}, {\bf
164}:1 (2012), 31--46.

\bibitem{vas_bes} A.A. Vasil'eva, ``Kolmogorov and linear
widths of the weighted Besov classes with singularity at the
origin'', {\it J. Appr. Theory}, {\bf 167} (2013), 1--41.

\bibitem{vas_vl_raspr} A.A. Vasil'eva, ``Embedding theorem for weighted Sobolev
classes on a John domain with weights that are functions of the
distance to some $h$-set'', {\it Russ. J. Math. Phys.}, {\bf 20}:3
(2013), 360--373.

\bibitem{vas_vl_raspr2} A.A. Vasil'eva, ``Embedding theorem for weighted Sobolev
classes on a John domain with weights that are functions of the
distance to some $h$-set'', {\it Russ. J. Math. Phys.} {\bf 21}:1
(2014), 112--122.

\bibitem{vas_width_raspr} A.A. Vasil'eva, ``Widths of function classes on sets with tree-like
structure'', {\it J. Appr. Theory}, {\bf 192} (2015), 19--59.

\bibitem{vas_john} A.A. Vasil'eva, ``Widths of weighted Sobolev classes on a John domain'',
{\it Proc. Steklov Inst. Math.}, {\bf 280} (2013), 91--119.

\end{Biblio}

\end{document}